\documentclass[11pt,reqno]{amsart}
\usepackage{amsmath}
\usepackage{amsthm}
\usepackage{graphicx}
\usepackage{amsfonts}
\usepackage{bm}
\usepackage{bbm}
\usepackage{textcomp}
\usepackage{hyperref}
\usepackage{amssymb}
\usepackage{mathrsfs}
\usepackage{enumerate}
\usepackage[margin=1.1in]{geometry}
\usepackage{xcolor}

\parindent=.25in

\numberwithin{equation}{section}

\newtheorem{theorem}{Theorem}[section]
\newtheorem{lemma}[theorem]{Lemma}
\newtheorem{proposition}[theorem]{Proposition}

\theoremstyle{definition}

\newtheorem{definition}[theorem]{Definition}
\newtheorem{remark}[theorem]{Remark}

\newtheorem*{standingassumption}{Standing assumptions}

\def\d{{\mathrm d}}
\def\E{{\mathbb E}}

\def\R{{\mathbb R}}

\def\N{{\mathbb N}}
\def\PP{{\mathbb P}}
\def\FF{{\mathbb F}}
\def\P{{\mathcal P}}

\def\GG{{\mathcal G}}

\def\T{{\mathbb T}}
\def\TT{{\mathcal T}_m}

\def\Z{{\mathbb Z}}

\def\F{{\mathcal F}}

\def\Var{{\mathrm{Var}}}
\def\Cov{{\mathrm{Cov}}}
\def\newW{K}
\def\K{\newW}


\title[Stationary local equations on regular trees]{Stationary solutions and local equations for interacting diffusions on regular trees}
\author{Daniel Lacker and Jiacheng Zhang}
\address{Department of Industrial Engineering \& Operations Research, Columbia University}
\email{daniel.lacker@columbia.edu}
\address{Department of Industrial Engineering \& Operations Research, University of California, Berkeley}
\email{jiachengz@berkeley.edu}
\thanks{D.L. is partially supported by the AFOSR Grant FA9550-19-1-0291 and the NSF CAREER award DMS-2045328.}

\begin{document}

\begin{abstract}
We study the invariant measures of infinite systems of stochastic differential equations (SDEs) indexed by the vertices of a regular tree. These invariant measures correspond to Gibbs measures associated with certain continuous specifications, and we focus specifically on those measures which are homogeneous Markov random fields.
We characterize the joint law at any two adjacent vertices in terms of a new two-dimensional SDE system, called the ``local equation," which exhibits an unusual dependence on a conditional law.
Exploiting an alternative characterization in terms of an eigenfunction-type fixed point problem, we derive existence and uniqueness results for invariant measures of the local equation and infinite SDE system.
This machinery is put to use in two examples. First, we give a detailed analysis of the surprisingly subtle case of linear coefficients, which yields a new way to derive the famous Kesten-McKay law for the spectral measure of the regular tree. Second, we construct solutions of tree-indexed SDE systems with nearest-neighbor repulsion effects, similar to Dyson's Brownian motion.
\end{abstract}

\maketitle

\section{Introduction}

In this paper we propose new characterizations for stationary solutions of infinite systems of stochastic differential equation indexed by the $m$-regular (Cayley) tree $\T_m$. For an integer $m \ge 2$, $\T_m$ is the infinite tree in which each vertex has exactly $m$ neighbors. 
For a vertex $v \in \T_m$, let $N(v)$ denote the set of adjacent vertices (neighbors).
For functions $U,\K : \R \to \R$, to be specified later, we consider $\bm X:=(X^v)_{v \in \T_m}$ satisfying the stochastic differential equation (SDE) 
\begin{equation}
\d X^v_t = -\bigg(U'(X^v_t) + \sum_{u \in N(v)}\K'(X^v_t-X^u_t)\bigg) \,\d t +\sqrt{2}\,\d W^v_t, \qquad v \in \T_m, \label{intro:SDE}
\end{equation}
where $\bm W:=(W^v)_{v \in \T_m}$ are independent Brownian motions.

The same SDE \eqref{intro:SDE} can be defined on any finite (or countable, locally finite) graph $G$ in place of $\T_m$, and we refer naturally to \emph{the SDE system set on $G$}.
Our study of this SDE systems builds on a recent line of literature on scaling limits of SDEs with nearest-neighbor interactions on large, finite graphs.
For dense Erd\H{o}s-R\'enyi graphs, the large-$n$ behavior is nearly identical to the mean field (complete graph) case \cite{bhamidi2019weakly,delattre2016note,
coppini2020law,oliveira2019interacting}. More generally, SDE systems set on dense graphs which converge in the graphon sense still exhibit certain mean field behaviors such as asymptotic independence  \cite{medvedev2014nonlinear,bet2020weakly,bayraktar2020graphon}. On the other hand, the case of \emph{sparse} graphs is not as well understood.
The recent results of \cite{oliveira2020interacting,lacker2019large}
show that the SDE system set on $\T_m$ arises as a suitable \emph{local} limit of finite systems: If the graph $\T_m$ is replaced by a sequence of finite graphs converging in the local weak (a.k.a.\ Benjamini-Schramm) sense to $\T_m$, such as the uniformly random $m$-regular graph on $n$ vertices as $n\to\infty$, then the corresponding SDE solutions converge in a certain local sense to \eqref{intro:SDE}, under suitable assumptions on the coefficients and initial conditions.
The paper \cite{lacker2020marginal} goes a step further by characterizing the marginal dynamics of any neighborhood in $\T_m$, in terms of what they call the \emph{local equation}, described in detail in Section \ref{se:localeqs} below.

The results of \cite{oliveira2020interacting,lacker2019large}, as well as most of the others discussed in the previous paragraph, pertain to \emph{dynamics} (out of equilibrium), and the goal of the present paper is to carry out an analysis of \emph{stationary solutions} (invariant measures). Our main results will characterize the marginal law of adjacent vertex pairs in stationary solutions of \eqref{intro:SDE} in terms of a \emph{local equation}, which is analogous to the dynamic local equation of \cite{lacker2020marginal}  but much more tractable, as we will see.
Understanding invariant measures is a natural problem in its own right, and it is also requisite first step toward an analysis of long-time behavior and metastability phenomena, which we do not carry out in this work.
Let us mention also that when $m=2$ or when the graph $\T_m$ is replaced by an integer lattice $\Z^d$, SDEs of the form \eqref{intro:SDE} arise in the study of the \emph{Ginzburg-Landau $\nabla \phi$ interface models}; see \cite{funaki-interface} for an overview.

If the SDE system \eqref{intro:SDE} is set on a general \emph{finite} graph $G=(V,E)$, then, under modest assumptions on $(U,K)$, the unique invariant measure (defined on $\R^V$) is  given by
\begin{align}
\mu_G(\d \bm x)=\frac{1}{Z}\exp\bigg( -\sum_{v \in V} U(x_v) - \sum_{\{u,v\} \in E} \K(x_v-x_u)\bigg)\,\d \bm x, \qquad Z > 0, \label{intro:Gibbs}
\end{align}
the summation being over undirected edges.
On an \emph{infinite} graph, there is no explicit density like \eqref{intro:Gibbs} to work with, and analogous measures may be studied using  the formalism of Gibbs measures  \cite{georgii2011gibbs,rassoul2015course}. Section \ref{se:gibbsconnection} will explain how the Gibbs measure formalism relates to the invariant measures of \eqref{intro:SDE}. 

There are two useful immediate properties of $\mu_G$, for a finite graph $G$. First, it is a Markov random field, which follows quickly from the factorized form of the density: If $\bm X_0:=(X^v_0)_{v \in V} \sim \mu_G$, and if $A,B,S \subset V$ are disjoint sets with the property that every path from $A$ to $B$ has at least one vertex in $S$, then $\bm X^A_0:=(X^v_0)_{v \in A}$ and $\bm X^B_0$ are conditionally independent given $\bm X^S_0$.
The second useful property is \emph{homogeneity} or \emph{automorphism invariance}: For any automorphism $\varphi$ of $G$, we have $(X^{\varphi(v)}_0)_{v \in V} \stackrel{\d}{=} (X^v_0)_{v \in V}$.
Our main interest in this paper is in the invariant measures of \eqref{intro:SDE} possessing a form of these Markov and homogeneity properties.

\subsection{Main results}

 In the following, we write $\bm x=(x_v)_{v\in\T_m}$ for a generic element of $\R^{\T_m}$, and $\bm X=(X^v)_{v\in\T_m}$ for the stochastic process of interest. We write also $\bm x_S=(x_v)_{v\in S}$ and $\bm X^S=(X^v)_{v\in S}$ for the restrictions to coordinates in a set $S \subset \T_m$. 
We make the following assumptions throughout the paper:

\begin{standingassumption}
{We are given an integer $m \ge 2$ and a.e.\ finite functions $U,\K : \R \to \R \cup \{\infty\}$ such that $K$ is even and $e^{-U},e^{-\K} : \R \to [0,\infty)$ are locally absolutely continuous. In particular, $U$ and $\K$ are a.e.\ differentiable.}
\end{standingassumption}

\begin{definition}\label{def:infiniteSDE}
A \emph{stationary solution of the infinite SDE} is a weak solution $\bm X:=(X^v)_{v \in \T_m}$  of \eqref{intro:SDE} which satisfies the time-stationarity property $\bm X_t \stackrel{\d}{=} \bm X_0$ for all $t \ge 0$, as well as
\begin{align}
\E\big[ |U'(X^v_0)| + |\K'(X^v_0-X^u_0)| \big] < \infty, \label{asmp:infSDE-integ}
\end{align}
for every edge $(v,u)$.
A \emph{stationary homogeneous Markov (SHM) solution of the infinite SDE} is a stationary solution which satisfies two additional properties:
\begin{itemize}
\item Homogeneity: $(X^{\varphi(v)}_0)_{v \in V} \stackrel{\d}{=} (X^v_0)_{v \in V}$ for each automorphism $\varphi$ of $\T_m$.
\item Markov property: Let $A,B,S \subset \T_m$ be finite sets such that every path from $A$ to $B$ passes through $S$. Then $\bm X^A_0$ and $\bm X^B_0$ are conditionally independent given $\bm X^S_0$.
\end{itemize}
\end{definition}

Note that the condition \eqref{asmp:infSDE-integ} ensures that the SDE \eqref{intro:SDE} makes sense.
We will characterize SHM solutions of the infinite SDE in terms of a new two-dimensional SDE system which describes the behavior of a single edge. We call it the \emph{local equation}, in analogy with the recent work \cite{lacker2020marginal} in the non-stationary regime, discussed further in Section \ref{se:relatedwork}.

\begin{definition}\label{def:localequation}
A \emph{stationary symmetric solution of the local equation} is a weak solution $(X,Y)$ of the SDE system
\begin{align}
\begin{split}
\d X_t &= -\big(U'(X_t) + \K'(X_t-Y_t) + (m-1)\E[\K'(X_t-Y_t)\,|\,X_t]\big)\d t + \sqrt{2} \, \d W_t, \\
\d Y_t &= -\big(U'(Y_t) + \K'(Y_t-X_t) + (m-1)\E[\K'(Y_t-X_t)\,|\,Y_t]\big)\d t + \sqrt{2} \,\d B_t,
\end{split} \label{intro:localeq}
\end{align}
where $W$ and $B$ are independent Brownian motions, which is stationary in the sense that $(X_t,Y_t) \stackrel{\d}{=} (X_0,Y_0)$ for all $t \ge 0$, and symmetric in the sense that $(X_0,Y_0) \stackrel{\d}{=} (Y_0,X_0)$. We implicitly require in this definition that
\begin{align}
\E\big[ |U'(X_0)| + |\K'(X_0-Y_0)| \big] < \infty. \label{asmp:localeq-integ}
\end{align}
\end{definition}

The SDE \eqref{intro:localeq} is nonlinear in the sense of McKean (cf.\ \cite{sznitman1991topics}), as the drift depends on the joint law of the solution $(X_t,Y_t)$. The particular dependence on the \emph{conditional} law, which is not weakly continuous as a functional of the joint law, places \eqref{intro:localeq} outside the scope of standard theory. Section \ref{se:SDEscondexp} discusses  some SDEs with similar conditional expectation terms that arise in different contexts.

The main message of the paper is that there is essentially a one-to-one correspondence between SHM solutions of the infinite SDE and stationary symmetric solutions of the local equation. Before we state precise results, we introduce a third formulation, which is again essentially equivalent to the above two. It will be useful in the proofs as well as some of the examples to follow.

\begin{definition} \label{def:fixedpoint}
A \emph{solution of the fixed point problem} is a measurable function $F : \R \to \R$ such that there exists $C \in \R$ satisfying 
\begin{align}
&F(x) = C -\log\int_\R e^{-U(y)-\K(x-y)-(m-1)F(y)}\d y, \ \ a.e.\ x \in \R, \text{ and}  \label{fixedpoint} \\
&\int_\R e^{-U(x)-mF(x)}\,\d x < \infty. \label{fixedpoint-integrability}
\end{align}
Note that the integral in \eqref{fixedpoint} always exists in $(0,\infty]$, and it must in fact be finite in order to have a solution of the fixed point problem.
\end{definition}

The fixed point problem can be interpreted as determining the invariant measures for a $\T_m$-indexed Markov chain. In the $m=2$ case, letting $\varphi=e^{-F}$, we see that $F$ solves the fixed point problem if and only if $\varphi$ is a (positive) eigenfunction of the integral operator $\varphi(\cdot) \mapsto \int_\R \varphi(y)\exp(-U(y)-\K(\cdot\, -y))\,\d y$.

Our three main theorems relate the above notions. First, moving from the infinite SDE to the local equation requires no additional assumptions:

\begin{theorem} \label{th:infSDE->local}
Let $(X^v)_{v \in \T_m}$ be a SHM solution of the infinite SDE. Then there exists a stationary symmetric solution $(X,Y)$ of the local equation such that $(X_0,Y_0) \stackrel{d}{=} (X^v_0,X^u_0)$ for each edge $(v,u)$ in $\T_m$.
\end{theorem}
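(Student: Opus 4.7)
The plan is to realize $(X,Y)$ as a Markov mimicking of the edge pair $(X^v, X^u)$ for a fixed edge $(v,u)$ of $\T_m$; by homogeneity all edges have the same joint law, so matching one edge suffices. From the infinite SDE,
\begin{align*}
dX^v_t = b^v_t\, dt + \sqrt{2}\, dW^v_t, \qquad b^v_t := -U'(X^v_t) - \sum_{w \in N(v)} K'(X^v_t - X^w_t),
\end{align*}
with an analogous equation for $X^u$, constant nondegenerate diffusion matrix $\sqrt{2}\,I_2$, and drifts $b^v_t, b^u_t$ in $L^1$ by \eqref{asmp:infSDE-integ}. The first task is to identify the Markov projection $\bar b(x,y) := \E[b^v_t \mid X^v_t = x,\, X^u_t = y]$.

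The terms $-U'(X^v_t)$ and $-K'(X^v_t - X^u_t)$ are $(X^v_t, X^u_t)$-measurable and survive the projection unchanged. For any remaining neighbor $w \in N(v) \setminus \{u\}$, the singleton $\{v\}$ separates $\{u\}$ from $\{w\}$ in $\T_m$, so the Markov random field property of $\bm X_t$ (which holds at every $t$, since time-stationarity transports the property from time $0$) yields $X^w_t \perp X^u_t \mid X^v_t$; and the tree automorphism fixing $v$ and swapping $u \leftrightarrow w$ shows by homogeneity that the conditional laws of $X^w_t$ and $X^u_t$ given $X^v_t$ coincide. Combining these,
$$\E[K'(X^v_t - X^w_t) \mid X^v_t, X^u_t] = \E[K'(X^v_t - X^w_t) \mid X^v_t] = \E[K'(X^v_t - X^u_t) \mid X^v_t],$$
and summing over the $m-1$ such $w$, and using stationarity to drop $t$,
$$\bar b(x,y) = -U'(x) - K'(x-y) - (m-1)\,\E[K'(X^v_0 - X^u_0) \mid X^v_0 = x].$$

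With $\bar b$ identified, I apply Gyongy's Markov mimicking theorem (in a form that allows integrable rather than bounded drifts, e.g.\ the sharpening due to Brunick-Shreve) to the two-dimensional It\^o process $(X^v_t, X^u_t)$. This yields a weak solution $(\tilde X, \tilde Y)$ to
\begin{align*}
d\tilde X_t = \bar b(\tilde X_t, \tilde Y_t)\,dt + \sqrt{2}\, dW_t, \qquad d\tilde Y_t = \bar b(\tilde Y_t, \tilde X_t)\,dt + \sqrt{2}\, dB_t,
\end{align*}
with independent Brownian motions $W, B$ (independence inherited from $[X^v, X^u]_t = 0$), satisfying $(\tilde X_t, \tilde Y_t) \stackrel{d}{=} (X^v_0, X^u_0)$ for every $t \ge 0$. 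Because the marginals match, $\E[K'(X^v_0 - X^u_0) \mid X^v_0 = x] = \E[K'(\tilde X_t - \tilde Y_t) \mid \tilde X_t = x]$, so the drift of $\tilde X$ along its own trajectory is precisely the McKean-Vlasov expression $-U'(\tilde X_t) - K'(\tilde X_t - \tilde Y_t) - (m-1)\,\E[K'(\tilde X_t - \tilde Y_t) \mid \tilde X_t]$ of the local equation. Stationarity of $(\tilde X, \tilde Y)$ follows because it is time-homogeneous Markov with a time-constant (hence invariant) one-dimensional marginal, and symmetry $(\tilde X_0, \tilde Y_0) \stackrel{d}{=} (\tilde Y_0, \tilde X_0)$ follows from the edge-swapping automorphism of $\T_m$, which makes $(X^v_0, X^u_0)$ exchangeable.

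The main technical hurdle is verifying the mimicking hypotheses under the minimal integrability of \eqref{asmp:infSDE-integ}: the classical Gyongy statement assumes bounded coefficients, and although the diffusion $\sqrt{2}\,I_2$ is constant and nondegenerate, the drift is only in $L^1$. One handles this either by invoking an extension of Gyongy applicable to integrable drifts (such as Brunick-Shreve) or via a standard localization/truncation followed by tightness and passage to the limit, taking care that the limiting marginal and the McKean-Vlasov form of $\bar b$ are preserved.
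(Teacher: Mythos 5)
Your proposal is correct and follows essentially the same route as the paper: apply the Brunick--Shreve mimicking theorem (which only needs $L^1$ drifts, so the ``technical hurdle'' you flag is already resolved by that citation, exactly as in the paper) to the edge pair $(X^v,X^u)$, and use the Markov property plus the automorphism swapping the two off-edge neighbors to collapse the conditional drift to the local-equation form, with symmetry and stationarity inherited from homogeneity and time-stationarity of the SHM solution. The only superfluous step is your appeal to a time-homogeneous Markov structure for stationarity --- which is not justified for a mere weak solution --- but it is also unnecessary, since the paper's notion of stationarity, $(X_t,Y_t)\stackrel{d}{=}(X_0,Y_0)$, follows directly from the mimicking conclusion that each time-$t$ marginal equals the law of $(X^v_0,X^u_0)$.
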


The converse, constructing an SHM solution from a solution of the local equation, is more involved and requires some additional assumptions of a technical nature. Our proof uses the fixed point problem along the way, so we split this direction into two theorems.

\begin{theorem} \label{th:local->fixedpoint}
Let $(X,Y)$ be a stationary symmetric solution of the local equation. Define
\begin{align*}
f(x) := \E[\K'(X_0-Y_0)\,|\,X_0=x].
\end{align*}
Suppose $U'$, $\K'$, and $f$ belong to $L^q_{\mathrm{loc}}(\R)$ for some $q > 2$. Then $F(x) := \int_0^x f(y)\,\d y$ is a solution of the fixed point problem. Moreover, the law of $(X_0,Y_0)$ admits the density 
\begin{equation}\label{jointdensity-thm}
\rho(x,y) = Z^{-1}\exp\big(-U(x)-U(y)-K(x-y)-(m-1)F(x)-(m-1)F(y)\big),
\end{equation}
where $0 < Z < \infty$ is defined by
\begin{align}
Z := \int_{\R^2}\exp\Big( -U(x)-U(y)-\K(x-y) - (m-1)F(x)-(m-1)F(y)\Big)\,\d x \d y. \label{jointdensity-normalization}
\end{align}
\end{theorem}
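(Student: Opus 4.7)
The plan is to exploit the gradient structure of the drift in the local equation, which emerges once $f$ is interpreted as a fixed function of the stationary law. By the symmetry $(X_0,Y_0)\stackrel{d}{=}(Y_0,X_0)$ and the evenness of $\K$ (so $\K'$ is odd), a short calculation yields $\E[\K'(Y_0-X_0)\mid Y_0=y]=f(y)$. Setting
\begin{equation*}
V(x,y):=U(x)+U(y)+\K(x-y)+(m-1)F(x)+(m-1)F(y),
\end{equation*}
both drifts in \eqref{intro:localeq} coincide with the components of $-\nabla V(X_t,Y_t)$, so the local equation becomes a classical overdamped Langevin diffusion on $\R^2$. The hypothesis $U',\K',f\in L^q_{\mathrm{loc}}(\R)$ for some $q>2$ gives $\nabla V\in L^q_{\mathrm{loc}}(\R^2)$ with $q$ strictly exceeding the spatial dimension.

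Let $\mu$ denote the law of $(X_0,Y_0)$. Time-stationarity gives the distributional Fokker-Planck identity $\L^*\mu=0$ with $\L g=-\nabla V\cdot\nabla g+\Delta g$. With $\nabla V\in L^q_{\mathrm{loc}}$, $q>2$, standard regularity theory for stationary Kolmogorov equations (in the vein of Bogachev-Krylov-R\"ockner) ensures $\mu$ admits a continuous, strictly positive density $\rho$ on $\R^2$. The gradient structure then recasts the Fokker-Planck equation as
\begin{equation*}
\nabla\cdot\bigl(\rho\,\nabla(\log\rho+V)\bigr)=0.
\end{equation*}
Pairing with the test function $(\log\rho+V)\chi_R$ for a smooth radial cutoff $\chi_R\uparrow 1$, integrating by parts, and passing to the limit $R\to\infty$ yields $\int_{\R^2}\rho\,|\nabla(\log\rho+V)|^2=0$. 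Strict positivity of $\rho$ forces $\log\rho+V$ to be constant on $\R^2$, so $\rho=Z^{-1}e^{-V}$ with $Z=\int e^{-V}\in(0,\infty)$, giving \eqref{jointdensity-thm}--\eqref{jointdensity-normalization}.

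With the joint density in hand, the fixed point equation follows by an elementary marginal computation. Define
\begin{equation*}
G(x):=-\log\int_\R e^{-U(y)-\K(x-y)-(m-1)F(y)}\,dy,
\end{equation*}
which is finite by the previous step. The conditional density of $Y_0$ given $X_0=x$ is $\rho(y\mid x)=\exp(G(x)-U(y)-\K(x-y)-(m-1)F(y))$, and differentiating $G$ under the integral gives
\begin{equation*}
G'(x)=\int_\R \K'(x-y)\rho(y\mid x)\,dy=\E[\K'(X_0-Y_0)\mid X_0=x]=f(x)=F'(x).
\end{equation*}
Hence $F-G$ is a constant $C$, which is \eqref{fixedpoint}. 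Moreover, $\rho_X(x)=Z^{-1}e^{C}e^{-U(x)-mF(x)}$, so $\int e^{-U-mF}=Ze^{-C}<\infty$, verifying \eqref{fixedpoint-integrability}.

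The main technical obstacle is the integration-by-parts/entropy argument in the second paragraph: one must justify the vanishing of the boundary terms as $R\to\infty$, which requires enough regularity on $\rho$ and suitable control on $\log\rho+V$ in a weighted sense. The $L^q_{\mathrm{loc}}$, $q>2$, hypothesis is imposed precisely to place $\nabla V$ in the super-critical range for two-dimensional stationary Kolmogorov theory and so to furnish both the continuous positive density and the Sobolev regularity needed to make the truncation argument rigorous.
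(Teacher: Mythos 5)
Your proposal follows the same conceptual route as the paper: by the symmetry of the law of $(X_0,Y_0)$ the conditional drift in the $Y$-equation also equals $f(Y_t)$ (oddness of $\K'$ is not actually needed for this; symmetry alone gives it), so the local equation is a two-dimensional Langevin diffusion with potential $V$, and the stationary law should be the Gibbs measure $Z^{-1}e^{-V}$; once the density is known, the fixed-point relation follows by differentiating $G(x)=-\log\int e^{-U(y)-\K(x-y)-(m-1)F(y)}\,\d y$, exactly as in the paper. The gap is in how you identify the stationary density. The paper cites a precise result, Theorem 4.1.12 of Bogachev--Krylov--R\"ockner--Shaposhnikov, and checks its two hypotheses: the drift is in $L^q_{\mathrm{loc}}(\R^2)$ with $q>2$ (the dimension), and the drift is $\mu$-integrable, which the paper verifies via \eqref{asmp:localeq-integ} together with Jensen's inequality for $f$. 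You instead sketch a truncation/entropy argument --- multiply the stationary Fokker--Planck equation by $(\log\rho+V)\chi_R$ and send $R\to\infty$ --- and then assert that the $L^q_{\mathrm{loc}}$, $q>2$, hypothesis supplies ``the Sobolev regularity needed to make the truncation argument rigorous.'' It does not: $q>2$ gives local H\"older continuity and strict positivity of $\rho$ but no global decay of $\rho$, $\nabla\rho$ or $\nabla V$, and the boundary terms you must kill live on annuli at infinity; moreover you have no a priori guarantee that $\nabla(\log\rho+V)\in L^2(\rho)$, which the manipulation presupposes. So the central identification of $\rho$ with the Gibbs density is stated, not proved, and you never verify the drift-integrability hypothesis that the cited theorem also requires. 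The remainder of your argument (differentiating $G$ to get $G'=f=F'$, concluding $F-G\equiv C$, and reading off $\int e^{-U-mF}<\infty$ from the marginal) matches the paper, though the paper also takes care to justify the differentiation under the integral via an explicit Fubini bound on $\int|\partial_x\rho|$, a step your write-up glosses over.
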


\begin{theorem} \label{th:fixedpoint->infSDE}
Let $F$ be a solution of the fixed point problem satisfying
\begin{align}
\int_{\R^2} (|U'(x)|^p + |\K'(x-y)|^p)e^{-U(x)-U(y)-K(x-y)-(m-1)F(x)-(m-1)F(y)}\,\d x \d y < \infty, \label{asmp:tech1'}
\end{align}
for some $p > 1$. Then $Z$ defined by \eqref{jointdensity-normalization} is finite, and there exists a SHM solution $(X^v)_{v \in \T_m}$ of the infinite SDE such that, for each edge $(u,v)$, the law of $(X^v_0,X^u_0)$ is given by the density function \eqref{jointdensity-thm}.
If it holds also that
\begin{align}
\int_A \bigg(\int_{\R} |\K'(x-y)|^m e^{-U(y)-m \K(x-y)} \d y \bigg)^{1/m} \d x &< \infty \label{asmp:tech2} 
\end{align}
for every compact set $A \subset \R$, then $F$ is locally absolutely continuous with $F'(x)=\E[\K'(X^v_0-X^u_0)\,|\,X^u_0=x]$ for a.e.\ $x \in \R$, for any edge $(v,u)$.
\end{theorem}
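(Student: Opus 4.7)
The plan is to first establish the finiteness of $Z$ and the single-vertex marginal, then build a tree-indexed Markov chain whose pair marginals match \eqref{jointdensity-thm}, verify that this static law is an invariant measure for the infinite SDE via the Gibbs-measure correspondence of Section \ref{se:gibbsconnection}, and finally handle the ``moreover'' part by a Fubini/Hölder argument.

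Set $g(x):=\int_\R e^{-U(y)-\K(x-y)-(m-1)F(y)}\,\d y$, so the fixed point equation reads $g(x)=e^{C-F(x)}$. Integrating the proposed density $\rho(x,y)$ with respect to $y$ gives
\begin{align*}
\int_\R \rho(x,y)\,\d y \;\propto\; e^{-U(x)-(m-1)F(x)}\,g(x) \;=\; e^{C}\,e^{-U(x)-mF(x)},
\end{align*}
which is integrable on $\R$ by \eqref{fixedpoint-integrability}; hence $Z<\infty$, and the one-vertex marginal is $\rho_1(x)\propto e^{-U(x)-mF(x)}$ while the conditional density $p(x,y):=\rho(x,y)/\rho_1(x)$ is a genuine Markov kernel with $\int p(x,y)\,\d y=1$.

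Next, fix an arbitrary root $o\in \T_m$, draw $X^o\sim\rho_1$, orient every edge away from $o$, and recursively draw the child given the parent according to $p$. The resulting law $\nu$ on $\R^{\T_m}$ is a Markov random field by construction, and the symmetry $\rho(x,y)=\rho(y,x)$ encodes the detailed-balance identity $\rho_1(x)p(x,y)=\rho_1(y)p(y,x)$; this gives pairwise reversibility, from which one deduces that $\nu$ is invariant under every automorphism of $\T_m$ (so the choice of root was immaterial) and that the edge-marginal is \eqref{jointdensity-thm}.

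The main step, and the one I expect to be the chief obstacle, is showing that $\nu$ is an invariant measure for \eqref{intro:SDE}. For any finite $A\subset\T_m$, a direct computation using the explicit form of $p$ and the tree Markov structure shows that the conditional law of $\bm X^A$ under $\nu$ given $\bm X^{A^c}$ is the finite-volume density proportional to $\exp(-\sum_{v\in A}U(x_v)-\sum_{\{u,v\}\cap A\neq\emptyset}\K(x_v-x_u))$; equivalently, $\nu$ is a translation-invariant splitting Gibbs measure for the specification associated with $(U,\K)$. The condition \eqref{asmp:tech1'} (with any $p>1$) together with Jensen's inequality yields the integrability \eqref{asmp:infSDE-integ} of the drift at every edge, so the infinite SDE \eqref{intro:SDE} makes sense. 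I would then invoke the SDE--Gibbs correspondence developed in Section \ref{se:gibbsconnection} to conclude that $\nu$ is invariant for the infinite dynamics, obtaining an SHM solution whose edge marginals are \eqref{jointdensity-thm}.

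For the final assertion, I would prove directly that $g$ is locally absolutely continuous with
\begin{align*}
g'(s) = -\int_\R \K'(s-y)\,e^{-U(y)-\K(s-y)-(m-1)F(y)}\,\d y.
\end{align*}
Using the local absolute continuity of $r\mapsto e^{-\K(r)}$ to write $e^{-\K(x-y)}-e^{-\K(x_0-y)}=-\int_{x_0}^x \K'(s-y)e^{-\K(s-y)}\,\d s$, Fubini reduces the problem to showing that $(s,y)\mapsto |\K'(s-y)|e^{-U(y)-\K(s-y)-(m-1)F(y)}$ is locally integrable in $s$. Splitting by H\"older with exponents $m$ and $m/(m-1)$ gives
\begin{align*}
\int_\R |\K'(s-y)|e^{-U(y)-\K(s-y)-(m-1)F(y)}\,\d y
\le \bigg(\!\int_\R |\K'(s-y)|^m e^{-U(y)-m\K(s-y)}\d y\bigg)^{\!1/m}\!\!\bigg(\!\int_\R e^{-U(y)-mF(y)}\d y\bigg)^{\!\!(m-1)/m},
\end{align*}
whose first factor is locally integrable in $s$ by \eqref{asmp:tech2} and whose second factor is finite by \eqref{fixedpoint-integrability}. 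Hence $F=C-\log g$ is locally absolutely continuous and
\begin{align*}
F'(x)=-g'(x)/g(x)=\int_\R \K'(x-y)p(x,y)\,\d y=\E[\K'(X^v_0-X^u_0)\mid X^v_0=x],
\end{align*}
completing the proof.
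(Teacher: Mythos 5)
Your proposal is correct in outline and takes a genuinely different (and somewhat slicker) route from the paper's. The paper builds the measure $\mu$ directly in terms of the boundary-law pair $(l,Q)$ from \eqref{def:bdy}, verifies the Kolmogorov consistency of the densities $p_k$ via the boundary-law identity \eqref{landQ} (Lemma \ref{le:consistent}), checks automorphism invariance by a careful re-rooting computation (Lemma \ref{le:Markov-autinv}), and then verifies the stationary Fokker--Planck equation \eqref{FKP} by hand (Proposition \ref{prop:verifydensity}) before applying the superposition principle. You instead build $\nu$ as a rooted tree-indexed reversible Markov chain with kernel $p(x,y)=\rho(x,y)/\rho_1(x)$; Kolmogorov consistency is then automatic because $p$ is a genuine stochastic kernel (no boundary-law identity needed at that stage), and root-independence (hence homogeneity) follows from iterating detailed balance along the path between two candidate roots, which is cleaner than the paper's re-rooting argument. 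Provided you supply the routine check that $\nu$ and the paper's $\mu$ coincide (they do: writing $\rho(x,y)=C_2^{-1}l(x)^{m-1}Q(x,y)l(y)^{m-1}$ and $\rho_1(x)=C_2^{-1}l(x)^m$, your rooted density telescopes exactly to the paper's $p_k$ in \eqref{Def:p_k}), your construction buys shorter arguments for both consistency and homogeneity. Your treatment of the ``moreover'' part is the same as the paper's.

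The one genuine gap is your appeal to Theorem \ref{th:gibbsconnection} to pass from the Gibbs structure of $\nu$ to a stationary solution of the infinite SDE. That theorem carries the hypothesis that $(U,\K)$ is \emph{admissible}, i.e.\ $Z_T(\bm x_{T^c})<\infty$ for every finite $T$ and \emph{every} boundary configuration $\bm x_{T^c}$. This is not a hypothesis of Theorem \ref{th:fixedpoint->infSDE}, and it does not follow from the fixed-point problem and \eqref{asmp:tech1'} alone (e.g.\ nothing in the standing assumptions forces $\int e^{-\sum_{u\in N(v)}\K(x_v-x_u)-U(x_v)}\,\d x_v$ to be finite for an arbitrary fixed neighbor configuration). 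Your ``direct computation'' does give $Z_T(\bm x_{T^c})<\infty$ for $\nu$-a.e.\ $\bm x_{T^c}$ as a byproduct, and an inspection of the proof of the converse half of Theorem \ref{th:gibbsconnection} shows it only ever uses the conditional density of $\mu$ along $\mu$-typical configurations, so the argument can be salvaged; but as written you cannot cite Theorem \ref{th:gibbsconnection} as a black box. The cleanest patch is to drop the citation and reproduce its short integration-by-parts argument for the Fokker--Planck equation directly, followed by the superposition principle Theorem \ref{th:superposition} and the integrability \eqref{asmp:tech1'} — which is precisely what the paper's Proposition \ref{prop:verifydensity} does.
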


The proofs are given in Section \ref{se:mainthmproofs}.
Theorem \ref{th:infSDE->local} comes from applying the so-called \emph{mimicking theorem} \cite{gyongy1986mimicking,brunick2013mimicking}, recalled in Theorem \ref{th:mimicking} below, and using the Markov property and homogeneity to simplify. Theorem \ref{th:local->fixedpoint} is a consequence of direct calculations involving the explicit form of the density. The local integrability assumption therein ensures that the probability density \eqref{jointdensity-thm}
is the \emph{unique} solution of an associated stationary Fokker-Planck equation, using results from \cite{bogachev2015fokker}. It seems plausible that the local integrability assumption could be worked around on a case-by-case basis.

Theorem \ref{th:fixedpoint->infSDE} is the most difficult.
{It is straightforward to argue that the law of a homogeneous Markov random field on $\T_m$ can always be reconstructed from the joint law at two adjacent vertices; in the $m=2$ case, this is just the construction of the law of a  two-sided reversible Markov chain $(Z_i)_{i \in \Z}$ from the law of $(Z_0,Z_1)$. To prove Theorem \ref{th:fixedpoint->infSDE}, we apply this construction starting from the joint density \eqref{jointdensity-thm}, and the main difficulty is to show that the resulting random field on $\T_m$ agrees in law with a SHM solution.} This exploits a related but distinct \emph{boundary law} representation for Markov chains on trees \cite[Chapter 12]{georgii2011gibbs}, discussed  in Section \ref{se:boundarylaws}.

\begin{remark} \label{re:tech1}
Suppose that for some $p > 1$ it holds that
\begin{align}
\int_{\R^2} (|U'(x)|^{mp}+|\K'(x-y)|^{mp}) e^{-U(x)-U(y)-m \K(x-y)} \,\d x \, \d y &< \infty. \label{asmp:tech1}
\end{align}
Then \eqref{asmp:tech1'} is automatically satisfied by any solution of the fixed point problem. Indeed, by H\"older's inequality, the left-hand side of \eqref{asmp:tech1'} is bounded by a constant times
\begin{align*}
&\bigg(\int_{\R^2} (|U'(x)|^{mp} +|\K'(x-y)|^{mp})e^{-U(x)-U(y)-mK(x-y)}\,\d x\d y\bigg)^{\frac1 m}\\
	&\quad \cdot \bigg(\int_{\R^2} e^{-U(x)-U(y)-mF(x)-mF(y)}\d x \d y \bigg)^{\frac{m-1}{m}}.
\end{align*}
The first term is finite because of \eqref{asmp:tech1}, and the second because of \eqref{fixedpoint-integrability}.
\end{remark}

\subsection{Well-posedness}

The above theorems allow us to transfer existence and uniqueness results between the infinite SDE, local equation, and fixed point problem.
Our next results, proven in Section \ref{se:wellposedness}, provide some  existence and uniqueness theorems by focusing on the fixed point problem, which is the most amenable to well-posedness analysis.. For $m=2$ (i.e., $\T_2=\Z$), existence and uniqueness hold under no additional assumptions, which is not surprising given that Gibbs measures on $\Z$ are typically unique \cite[Chapter 10]{georgii2011gibbs}.

\begin{theorem} \label{th:intro:wellposed-m=2}
Suppose $m=2$. 
\begin{itemize}
\item If \eqref{asmp:tech2} holds (with $m=2$), then there is at most one solution of the fixed point problem up to additive shifts. If also $U',K'\in L^q_{\text{loc}}(\R)$ for some $q>2$, then there is at most one stationary symmetric solution or the local equation (resp.\ SHM solution of the infinite SDE) among those  satisfying $f\in L^q_{\text{loc}}(\R)$, where
\begin{align} 
f(x) &:=\E[K'(X_0-Y_0)|X_0=x],  \label{f-def-local} \\
(\text{resp. } f(x) &:=\E[K'(X^v_0-X^u_0)|X_0=x] \text{ for some edge } (v,u)  ).  \label{f-def-SHM}
\end{align}
\item If 
\begin{align}
\int_\R \int_\R e^{-U(x)-U(y)-2\K(x-y)}\, \d x \d y < \infty, \label{asmp:m=2}
\end{align}
then there exists a solution of the fixed point problem. If also \eqref{asmp:tech1} holds with $m=2$ for some $p > 1$, then there exist a stationary symmetric solution of the local equation and an SHM solution of the infinite SDE.
\end{itemize}
\end{theorem}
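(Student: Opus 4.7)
My approach reduces the fixed point problem (when $m=2$) to a positive-eigenfunction problem for a symmetric integral operator. Setting $\varphi := e^{-F}$ and $\lambda := e^{-C}$ converts \eqref{fixedpoint} into the linear eigenvalue equation $\varphi(x) = \lambda\int_\R \varphi(y)\,e^{-U(y)-K(x-y)}\,\d y$, while \eqref{fixedpoint-integrability} becomes the requirement $\psi \in L^2(\R)$, where $\psi(x) := \varphi(x)e^{-U(x)/2}$. Because $K$ is even, the equation in terms of $\psi$ is $\psi = \lambda T_k \psi$ with the symmetric kernel $k(x,y) := e^{-U(x)/2-U(y)/2-K(x-y)}$. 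Thus solutions of the fixed point problem, modulo additive shifts of $F$, correspond bijectively to strictly positive $L^2(\R)$-eigenfunctions of the self-adjoint operator $T_k$, modulo positive scalars.

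For existence, hypothesis \eqref{asmp:m=2} is exactly $\iint k(x,y)^2\,\d x\,\d y<\infty$, so $T_k$ is Hilbert-Schmidt (hence compact and self-adjoint) on $L^2(\R)$, and its kernel is a.e.\ strictly positive. Jentzsch's theorem (the integral-operator Perron-Frobenius) then produces a strictly positive eigenfunction $\psi_0 \in L^2(\R)$ at the Perron eigenvalue, yielding a fixed-point solution $F_0$. Under the additional hypothesis \eqref{asmp:tech1} (with $m=2$), Remark \ref{re:tech1} gives \eqref{asmp:tech1'}, so Theorem \ref{th:fixedpoint->infSDE} delivers the corresponding SHM solution of the infinite SDE, and Theorem \ref{th:infSDE->local} gives a stationary symmetric solution of the local equation.

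For uniqueness, let $F_1,F_2$ be two solutions with positive $L^2$-eigenfunctions $\psi_1,\psi_2$ at eigenvalues $\lambda_1,\lambda_2$. The symmetric pairing $\langle\psi_1,T_k\psi_2\rangle=\langle T_k\psi_1,\psi_2\rangle$ (finite and positive by Cauchy-Schwarz and $\psi_1,\psi_2>0$) forces $\lambda_1=\lambda_2$. Writing $\varphi_i := \psi_i e^{U/2}$, I introduce the Markov kernel $P_1(x,\d y) := \lambda_1\varphi_1(y)\varphi_1(x)^{-1}e^{-U(y)-K(x-y)}\,\d y$, which is $\mu_1$-reversible for the finite measure $\mu_1(\d x):=\psi_1(x)^2\,\d x$ (only the symmetry of $K$ is used for reversibility). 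The ratio $h := \varphi_2/\varphi_1$ is then $P_1$-harmonic, and strict positivity of the transition density (from the standing assumptions on $e^{-K}$) together with reversibility to a finite measure yields $\mu_1$-ergodicity of $P_1$, so $h$ must be constant. Hypothesis \eqref{asmp:tech2} supplies the regularity of $\varphi_i$ needed to justify these ergodic manipulations. The uniqueness claim for the local equation and the infinite SDE then follows: Theorem \ref{th:local->fixedpoint} sends any qualifying solution to a fixed point solution (now unique up to shifts), hence to a unique two-point density \eqref{jointdensity-thm}, and an SHM field on $\T_2=\Z$ is determined by this marginal via the reversible two-sided Markov chain reconstruction mentioned in the introduction.

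The main obstacle I anticipate is the ergodic step in the uniqueness argument: showing that the only positive $P_1$-harmonic functions are constants. Although the transition density is strictly positive a.e., a priori $\varphi_i$ is only measurable, so some regularity --- supplied by \eqref{asmp:tech2} --- is needed to rigorously apply a reversible-chain ergodicity criterion. A clean alternative, if more regularity of $\varphi_i$ is available, is to use the eigenvalue equation together with Jensen's inequality applied to $\log h$ to bracket $h$ between two positive constants, and then iterate to force $h$ to be constant.
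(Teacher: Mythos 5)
Your existence argument is essentially the paper's: the paper applies Krein--Rutman to the compact positive operator $S\varphi(\cdot)=\int_\R\varphi(y)e^{-U(y)-\K(\cdot-y)}\,\d y$ on $L^2(e^{-U}\d x)$, which is unitarily equivalent to your Jentzsch argument for the symmetrized kernel $k$ on $L^2(\d x)$; the passage to the local equation and the infinite SDE via Remark \ref{re:tech1} and Theorems \ref{th:fixedpoint->infSDE} and \ref{th:infSDE->local} is identical. Your uniqueness argument, by contrast, is a genuinely different route. The paper works with the two joint densities of the form \eqref{jointdensity-thm} built from $F^1,F^2$, adds the two relative entropies and compares with the marginal relative entropies to see that the chain-rule inequality is saturated, whence the conditional densities coincide and $F^1-F^2$ is constant; \eqref{asmp:tech2} enters there only through Theorem \ref{th:fixedpoint->infSDE}, to identify $(F^k)'$ with a conditional expectation. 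Your route --- equal eigenvalues via the symmetric pairing, then the Doob $h$-transform $P_1$ and the claim that positive $P_1$-harmonic functions are constant --- is the classical positive-eigenfunction uniqueness argument and, usefully, does not require compactness, so it is well suited to the first bullet where \eqref{asmp:m=2} is not assumed.

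As written, though, the decisive step is a gap, and you flag it yourself: you assert ``$\mu_1$-ergodicity of $P_1$, so $h$ must be constant,'' but $h=\varphi_2/\varphi_1$ is a priori unbounded and merely measurable, and ergodicity (triviality of the invariant $\sigma$-field) by itself only disposes of bounded or square-integrable harmonic functions. What closes the argument is Harris recurrence: $P_1$ has an a.e.\ strictly positive transition density with respect to Lebesgue measure (this already follows from the standing assumptions, since $e^{-U},e^{-\K}>0$ a.e.) together with an invariant probability measure equivalent to Lebesgue, hence it is positive Harris recurrent on a full absorbing set; then $h(X_n)$ is a nonnegative martingale, so it converges a.s., while Harris recurrence forces infinitely many visits to $\{h<a\}$ and $\{h>b\}$ whenever both have positive measure, so $h$ is a.e.\ constant. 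This needs to be carried out (including the null-set/absorbing-set bookkeeping), and note that \eqref{asmp:tech2} is not what supplies it: the positivity of the kernel comes from the standing assumptions and no extra regularity of $\varphi_i$ enters, so attributing the ergodic step to \eqref{asmp:tech2} is a misdirection (harmless, since it is a hypothesis of the theorem, but it leaves the actual step unproved). A small aid: $\int_\R h\,\d\mu_1=\langle\psi_1,\psi_2\rangle<\infty$ by Cauchy--Schwarz, so $h\in L^1(\mu_1)$, which you may want in whichever version of the ergodic argument you settle on. Your final reductions --- Theorem \ref{th:local->fixedpoint} for the local equation, and reconstruction of the homogeneous Markov field on $\Z$ from the unique edge marginal for the SHM solution --- agree with the paper.
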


For $m > 2$, well-posedness is more delicate and cannot be expected to hold in general, as phase transitions are more prevalent for Gibbs measures on $\T_m$. 

\begin{theorem} \label{th:intro:wellposed-m>2}
Suppose $m > 2$. Assume $U$ and $\K$ are even functions and that $U'$ and $\K'$ are absolutely continuous, with $\K'' \in L^\infty(\R)$ and
\begin{align}
\mathrm{ess}\inf_x U''(x) > m\big(\|\K''\|_\infty-\mathrm{ess}\inf_x\K''(x)\big). \label{asmp:m>2-thm}
\end{align}
Then there exists a unique solution of the fixed point problem up to additive shifts. Uniqueness holds for the stationary symmetric solution of the local equation (resp.\ SHM solution of the infinite SDE) among the class of solutions satisfying $f \in L^q_{\mathrm{loc}}(\R)$ for some $q>2$, where $f$ is given as in \eqref{f-def-local} (resp.\ \eqref{f-def-SHM}). {If also there exist $0 \le p < 2$  and $c_1,c_2 > 0$ such that $|U'(x)| \le c_1e^{c_2|x|^p}$, then there exist a stationary symmetric solution of the local equation and a SHM solution of the infinite SDE}.
\end{theorem}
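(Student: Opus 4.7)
My plan is to attack the fixed point problem directly and transfer conclusions to the local equation and infinite SDE via Theorems \ref{th:infSDE->local}--\ref{th:fixedpoint->infSDE}. Set $\tilde T[F](x) := -\log\int_\R e^{-U(y)-\K(x-y)-(m-1)F(y)}\,\d y$, so $F$ solves the fixed point problem exactly when $F = \tilde T[F] + C$, and abbreviate $\alpha := \mathrm{ess}\inf U''$, $\kappa_+ := \|\K''\|_\infty$, $\kappa_- := \mathrm{ess}\inf\K''$, so the standing hypothesis reads $\alpha > m(\kappa_+-\kappa_-)$. Differentiating twice under the integral sign (legitimate because $U', \K'$ are absolutely continuous and $U$'s strong convexity gives sub-Gaussian decay), and writing $\mu_F(x,\d y)\propto e^{-U(y)-\K(x-y)-(m-1)F(y)}\,\d y$,
\[
\tilde T[F]'(x) = \E_{\mu_F(x,\cdot)}[\K'(x-Y)], \qquad \tilde T[F]''(x) = \E_{\mu_F(x,\cdot)}[\K''(x-Y)] - \Var_{\mu_F(x,\cdot)}(\K'(x-Y)),
\]
and $\mu_F(x,\cdot)$ is log-concave in $y$ with second-derivative potential bounded below by $\alpha + \kappa_- + (m-1)\mathrm{ess}\inf F''$.

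The decisive ingredient is an a priori lower bound $F''\ge\beta^*$ obtained by a self-consistency argument using the Brascamp--Lieb variance inequality. Writing $\beta := \mathrm{ess}\inf F''$, Brascamp--Lieb yields $\Var_{\mu_F(x,\cdot)}(\K'(x-Y)) \le \kappa_+^2/(\alpha+\kappa_-+(m-1)\beta)$ whenever the denominator is positive, so the formula for $\tilde T[F]''$ forces $\gamma := \kappa_--\beta$ to satisfy $(m-1)\gamma^2 - (\alpha+m\kappa_-)\gamma + \kappa_+^2 \ge 0$. A direct check shows that under $\alpha > m(\kappa_+-\kappa_-)$ this quadratic has two real positive roots $\gamma_-<\gamma_+$ (the discriminant is strictly positive because $(m-2)^2>0$ for $m>2$), and the trivial upper bound $F''\le\kappa_+$ (from $\E[\K'']\le\kappa_+$ and nonnegativity of variance) rules out $\gamma\ge\gamma_+$, yielding $\beta\ge\beta^*:=\kappa_--\gamma_-$.

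Uniqueness of the fixed point then follows from a contraction argument. Given two solutions $F_1,F_2$, interpolating $F_t := tF_1+(1-t)F_2$ gives
\[
F_1'(x)-F_2'(x) = -(m-1)\int_0^1 \Cov_{\mu_{F_t}(x,\cdot)}\bigl(\K'(x-Y),\,F_1(Y)-F_2(Y)\bigr)\,\d t,
\]
and applying the Brascamp--Lieb covariance estimate with $\partial_y^2 H_{F_t}\ge\alpha+\kappa_-+(m-1)\beta^*$ produces a contraction with coefficient $(m-1)\kappa_+/(\alpha+\kappa_-+(m-1)\beta^*)$; the same algebra as above shows this equals $\kappa_+/\gamma_+<1$ precisely under $\alpha>m(\kappa_+-\kappa_-)$, forcing $F_1-F_2$ to be constant. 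Existence is obtained by Picard iteration starting from $F_0(x) := \beta^* x^2/2$ (one verifies $\alpha+m\beta^*>0$ under the hypothesis so that $\int e^{-U-mF_0}<\infty$): the self-consistent argument preserves the invariant class $\{F:F''\ge\beta^*\}$ under $\tilde T$ (since $\beta^*$ is precisely a fixed point of the self-consistency map), and the contraction then produces a Cauchy sequence whose limit $F$ solves the fixed point problem.

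To transfer the conclusion, any stationary symmetric local equation solution with $f\in L^q_{\mathrm{loc}}$ gives, via Theorem \ref{th:local->fixedpoint}, a fixed point $F(x):=\int_0^x f(y)\,\d y$, unique up to additive shifts, and since the joint density \eqref{jointdensity-thm} is invariant under such shifts the law of $(X_0,Y_0)$ is determined; Theorem \ref{th:infSDE->local} extends this to SHM solutions. For existence of an SHM solution, we check the hypotheses of Theorem \ref{th:fixedpoint->infSDE} for the $F$ just constructed: strong convexity of $U$ yields sub-Gaussian decay of $e^{-U-(m-1)F}$, which combined with $|U'(x)|\le c_1 e^{c_2|x|^p}$ ($p<2$) and the at-most-linear growth of $\K'$ (from $\K''\in L^\infty$) gives both \eqref{asmp:tech1} and \eqref{asmp:tech2}, so Remark \ref{re:tech1} and Theorem \ref{th:fixedpoint->infSDE} produce the SHM solution. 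I expect the main technical obstacle to be the contraction step: since $F_i'$ generically grow at infinity, a na\"ive $L^\infty$ bound on $F_1'-F_2'$ is unavailable, and one must either work in a weighted seminorm or replace the pointwise covariance bound by its $L^2$ version integrated against the marginal $\pi_{F_t}(\d x)\propto e^{-U(x)-mF_t(x)}\d x$, which is invariant under the Markov kernel $\mu_{F_t}$ and so lets the contraction propagate cleanly along the associated chain.
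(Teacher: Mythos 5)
Your overall plan — derive a priori curvature bounds for the fixed point via Brascamp--Lieb, prove existence and uniqueness for $\tilde T$, then transfer to the local equation and SDE — has the same skeleton as the paper, but the two load-bearing steps have genuine gaps, and the paper avoids both by taking a different route.

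First, the a priori lower bound $\mathrm{ess}\inf F''\ge\beta^*$ for an \emph{arbitrary} fixed point is circular. To even write the variance formula for $\tilde T[F]''$ and invoke Brascamp--Lieb you need the conditional density $\mu_F(x,\cdot)$ to have a log-concavity constant $\alpha+\kappa_-+(m-1)\beta>0$, i.e.\ you already need $\beta>-\tfrac{\alpha+\kappa_-}{m-1}$. Nothing in Definition \ref{def:fixedpoint} guarantees this; a solution of the fixed point problem need not have second derivative bounded below a priori, so the quadratic inequality in $\gamma$ cannot be derived for a general competitor. Moreover, even granting the quadratic, the step ``the trivial upper bound $F''\le\kappa_+$ rules out $\gamma\ge\gamma_+$'' is wrong in direction: $F''\le\kappa_+$ gives $\beta\le\kappa_+$, hence $\gamma=\kappa_--\beta\ge\kappa_--\kappa_+$, which is a \emph{lower} bound on $\gamma$ and does not exclude the branch $\gamma\ge\gamma_+$. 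You need an upper bound on $\gamma$ there, and you don't have one.

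Second, you correctly flag that the interpolation/contraction estimate fails in any unweighted norm, and your proposed remedy (integrate the covariance estimate against the marginal $\pi_{F_t}$) is exactly the spirit of what the paper does, but you haven't carried it out and it is not a trivial patch: the interpolant $F_t$ must itself have uniformly controlled curvature for every $t\in[0,1]$, which again requires the a priori bound for both $F_1$ and $F_2$. The paper sidesteps all of this in two ways. For existence it uses Schauder's fixed point theorem on the compact convex set $\mathcal{S}=\{F:\,d\le F''\le c,\ F(0)=F'(0)=0\}$ with $d=b-\tfrac{c}{m-1}$, where both bounds on $F''$ matter for compactness; no contraction is needed. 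For uniqueness it introduces a \emph{one-sided} argument in terms of relative Fisher information: with $\rho$ built from the constructed solution $F\in\mathcal{S}$ and $\tilde\rho$ from an arbitrary competitor, the chain $\text{Lipschitz }\K'\Rightarrow W_1\Rightarrow$ Otto--Villani $\Rightarrow$ log-Sobolev gives $I(\tilde\rho|\rho)\le rI(\tilde\rho|\rho)$ with $r<1$. Crucially, the log-Sobolev/transport inequalities are applied only to the conditional laws of $\rho$ (which lie in $\mathcal{S}$ and hence are uniformly log-concave); the competitor $\tilde\rho$ enters only as the ``$\nu$'' in $H(\nu|\rho(\cdot|x))$ and $I(\nu|\rho(\cdot|x))$ and needs no log-concavity at all. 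This asymmetry is what makes uniqueness hold over the full class, not just within $\mathcal{S}$, and it is precisely the feature your symmetric contraction argument is missing. The transfer to the local equation and SDE via Theorems \ref{th:infSDE->local}--\ref{th:fixedpoint->infSDE}, and the verification of \eqref{asmp:tech1}, \eqref{asmp:tech2} using the sub-Gaussian tail of $e^{-U-(m-1)F}$ and the subexponential bound on $U'$, matches the paper.
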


The assumption \eqref{asmp:m>2-thm} in Theorem \ref{th:intro:wellposed-m>2} requires the convexity of the confinement potential to be strong relative to the interaction potential. Note that the right-hand side of \eqref{asmp:m>2-thm} is nonnegative, so $U$ is necessarily convex, but $\K$ need not be convex.
When $\K$ is quadratic, the assumption \eqref{asmp:m>2-thm} is simply the uniform convexity of $U$.
A sufficient condition for \eqref{asmp:m>2-thm} is that $U'' \ge 2m\|\K''\|_\infty$ pointwise, where we note that $\|\K''\|_\infty$ is the Lipschitz constant of $\K'$. {Convexity is well known to play an important role in the uniqueness on invariant measures (ground states) for mean field models \cite{mccann1997convexity,carrillo2003kinetic}, and a natural open question is if mere uniform convexity of $U$ and $\K$ would suffice for uniqueness in the setting of Theorem \ref{th:intro:wellposed-m>2}. Smallness conditions on the interaction potential, similar to \eqref{asmp:m>2-thm}, are also known ensure uniqueness in mean field models when the interaction is not required to be convex  \cite{guillin2019uniform,lacker2021quantitative}.}

\subsection{Examples}
After proving the above general theorems, the rest of the paper is devoted to two noteworthy examples, which we summarize here.

\subsubsection{Linear coefficients and the resolvent of $\T_m$} \label{se:intro:linear}

In Section \ref{se:linear-coeff}, we study the case of linear coefficients. Specifically, consider $U(x)=(z-m)x^2/2$ and $\K(x)=x^2/2$, for some fixed $z \in \R$. Then the SDE system \eqref{intro:SDE} can be written in terms of $\bm{X}_t = (X^v_t)_{v \in \T_m}$ as
\begin{align*}
\d \bm{X}_t &= -(z\bm{I} - \bm{A})\bm{X}_t \d t + \sqrt{2}\, \d \bm{W}_t,
\end{align*}
where $\bm{A}$ is the \emph{adjacency operator} of the tree, and $\bm{I}$ is the identity operator; see Section \ref{se:linear-coeff} for precise definitions.
Formally, the invariant measure should be an infinite-dimensional centered Gaussian with covariance operator $(z\bm{I}-\bm{A})^{-1}$, which is the resolvent of $\bm{A}$, assuming the inverse exists, say in the Hilbert space $\ell_2(\T_m)$.
Letting $(\bm e_v)_{v \in \T_m}$ denote the standard basis of $\ell_2(\T_m)$, a formal computation shows that $\langle \bm e_v,(z\bm{I}-\bm{A})^{-1}\bm e_v\rangle = \Var(X^v_t)$.
For $z > m$, we then show how to explicitly compute this variance by solving the local equation, which gives an expression for $\Var(X^v_t)$ in terms of $z$. This yields a formula for the resolvent $z \mapsto \langle \bm e_v,(z\bm{I}-\bm{A})^{-1}\bm e_v\rangle$, which is exactly the Cauchy-Stieltjes transform of the spectral measure of $\bm{A}$.
This recovers, by a new method, the famous Kesten-McKay law \cite{kesten1959symmetric,mckay1981expected} for the $m$-regular tree $\T_m$.

There is an interesting additional regime where $2\sqrt{m-1}<z<m$. In this case, there are in fact two Gaussian SHM solutions. One is distinguished by its lower edge-wise correlations and good summability  properties, which allows it to again be connected to the resolvent as above; essentially, it can be viewed as a Gaussian process on $\ell_2(\T_m)$. The second solution has higher correlations, and it cannot be viewed as a process on $\ell_2(\T_m)$.

\subsubsection{Particle systems with repulsion} \label{se:intro:repulsion}
Lastly, in Section \ref{se:repulsion}, we study a model inspired by the $\beta$-ensembles of random matrix theory. Specifically, consider $\K(x)=-\beta\log|x|$, for $\beta > 0$.
The resulting SDE system has a repulsive drift as in the SDE for Dyson's Brownian motion:
\begin{align}
\d X^v_t &= \left(- U'(X^v_t) + \sum_{u \in N(v)}\frac{\beta}{X^v_t-X^u_t}  \right)\d t + \sqrt{2} \, \d B^v_t, \quad v \in \T_m. \label{DysonSDE}
\end{align}
In the case where $\beta=2$ and $U$ is even, we use the fixed point problem to construct an SHM solution of \eqref{DysonSDE} with an explicit joint density for any pair of adjacent particles. The solution of the fixed point problem is unique, but we do not know if the SHM solution is, because the singularity of $\K'$ prevents an application Theorem \ref{th:local->fixedpoint}.
{We restrict to $\beta=2$ to enable exact calculations which are unavailable for general $\beta$, which is natural in analogy with the special tractability of the $\beta=2$ (GUE) ensemble in random matrix theory. A surprise in our setting is that explicit solutions are still available for general, non-quadratic potentials $U$.}

On a finite graph $G=(V,E)$ in place of $\T_m$, the invariant measure of \eqref{DysonSDE} corresponds to what one might call a \emph{$\beta$-ensemble on $G$}, given by the probability measure
\begin{align*}
\mu_G(d\bm{x}) &= \frac{1}{Z} \prod_{\{u,v\} \in E}|x_u-x_v|^\beta  \prod_{v \in V}e^{-U(x_v)}\d x_v.
\end{align*}
In two extreme cases, this corresponds to a random matrix model:
When $G$ is a complete graph, and when $U(x)=-\beta x^2/4$, this is precisely the well-known $\beta$-ensemble of random matrix theory. For instance, in the case $\beta=1$ (resp.\ $\beta=2$), the measure $\mu_G$ becomes the (symmetrized) joint law of the eigenvalues of a GOE (resp.\ GUE) random matrix \cite[Section 2.5]{anderson2010introduction}.  On the other extreme, if the graph is the trivial one (no edges), then the particles are i.i.d.\ and we can think of them as the eigenvalues of a random diagonal matrix with i.i.d.\ entries.
For other graphs $G$ between these extremes, it is not clear but natural to wonder if $\mu_G$ corresponds similarly to the eigenvalue distribution of any natural random matrix model.

\subsection{On the dynamic case}

It is worth explaining why we solely consider \emph{time-stationary} solutions of the SDEs \eqref{intro:SDE} and \eqref{intro:localeq} in this paper. The \emph{Markov property} is essential in deducing the local equation \eqref{intro:localeq} from the infinite SDE \eqref{intro:SDE}. For stationary solutions, it is natural to expect that the Markov property holds, as it does when the model is restricted to finite graphs as discussed around \eqref{intro:Gibbs}. In a non-stationary context, however, one cannot expect that the solution of \eqref{intro:SDE} to define a Markov random field at each time $t$, even on a finite graph. One reason is because of the phenomenon of \emph{Gibbs-non-Gibbs transitions} \cite{van2010gibbs,van2009gibbsianness}. More recently, it was shown in \cite{lacker2021locally} that the natural conditional independence structure in the non-stationary setting is that the \emph{trajectories} $((X^v_s)_{s \le t})_{v \in \T_m}$ form a \emph{second-order} Markov random field (assuming the same for the initial conditions).
In short, the non-stationary version of \eqref{intro:SDE} will rarely if ever have the property that $(X^v_t)_{v \in \T_m}$ is a Markov random field for each $t \ge 0$, even if it is initialized as such, and this breaks the connection with the local equation \eqref{intro:localeq}.

\subsection{Related literature} \label{se:relatedwork}

\subsubsection{Local equations} \label{se:localeqs}
The SDE system \eqref{intro:localeq} is the stationary analogue of the \emph{local equation} introduced recently in the dynamic setting in \cite{lacker2020marginal}, as part of a program to understand scaling limits of interacting diffusions on large (locally convergent) sparse graphs \cite{lacker2019large}. 
The main the result of \cite{lacker2020marginal} applies more generally to unimodular Galton-Watson trees, but in the case of a non-random tree it takes the following form. Suppose that an  i.i.d.\ initial distribution $(X^v_0)_{v \in \T_m}$ is prescribed, and that $U'$ and $\K'$ are globally Lipschitz, so that the SDE system is well-posed for all time. For a distinguished \emph{root vertex} denoted $0 \in \T_m$ and its neighborhood $\{1,\ldots,m\}$, it is shown in \cite[Section 3.2.1]{lacker2020marginal} that $(X^i_\cdot)_{i=0}^m \stackrel{\d}{=} (Y^i_\cdot)_{i=0}^m$, where $(Y^i_\cdot)_{i=0}^m$ is the unique in law solution of the \emph{local equation}
\begin{align*}
\d Y^{0}_t &= -\Big(U'(Y^{0}_t) + \sum_{v =1}^m\K'(Y^{0}_t-Y^v_t)\Big)\d t + \sqrt{2}\, \d W^{0}_t, \\
\d Y^v_t &= -\Big(U'(Y^v_t) + \K'(Y^v_t-Y^{0}_t) + (m-1)\gamma_t(Y^v,Y^{0}) \Big)\d t + \sqrt{2}\, \d W^v_t, \  v=1,\ldots,m \\
\gamma_t(y^{0},y^1) &= \E\big[\K'(Y^2_t - Y^{0}_t)\,\big|\,Y^{0}_s=y^{0}_s,\,Y^1_s=y^1_s, \, s \le t\big],
\end{align*}
where $(W^v)_{v =0}^m$ are independent Brownian motions.
The appearance of a full neighborhood, rather than a single edge as in our \eqref{intro:localeq}, is due to the fact that the trajectories of the dynamic particle system form a \emph{second-order} Markov random field, rather than \emph{first-order} as in our case. Similarly, the conditional expectation term $\gamma_t$ depends on the entire trajectories of $(y^0,y^1)$ rather than just the time-$t$ values, because it is the former and not the latter which form a second-order Markov random field.

The above \emph{local equation} from \cite{lacker2020marginal} describes the out-of-equilibrium dynamics of a single neighborhood of \eqref{intro:SDE}, whereas our new local equation \eqref{intro:localeq} describes the equilibrium for a single edge of \eqref{intro:SDE}. It would be natural to try to connect the two, for instance, by trying to analyze the $t \to\infty$ behavior of the system \eqref{intro:SDE} in terms of the local equations. We do not pursue this here.

\subsubsection{SDEs with conditional expectations} \label{se:SDEscondexp}

A peculiar feature of the local equation \eqref{intro:localeq} is that the coefficients depend on a conditional expectation. Related equations have appeared in different contexts, including simulation \cite{JLR}, Lagrangian stochastic models of  turbulent flows \cite{bossy2011conditional}, and local stochastic volatility models in mathematical finance \cite{lacker2020inverting,jourdain2020existence}.
Our paper shows yet another context in which SDEs with conditional expectations naturally arise, and our existence and uniqueness for stationary solutions of the local equation are not covered by these prior works.

\subsubsection{Gibbs measures on Cayley trees}

There is an extensive literature on Gibbs measures on the infinite (Cayley) trees $\T_m$, where many interesting phase transitions are remarkably tractable compared to more traditional lattice models. 
The classic book \cite{georgii2011gibbs} gives a thorough treatment of Gibbs measures in general, with Chapter 12 therein giving an overview of Markov random fields on trees, including results of the important early papers \cite{spitzer1975markov,zachary1983countable}. The property we call \emph{homogeneity} is also known as \emph{translation invariance} in various references, and Gibbs measures having the \emph{Markov property} are also known as \emph{splitting Gibbs measures}.
In contrast with the present paper, the vast majority of this literature deals with a discrete state (spin) space instead of $\R$. While we cannot do justice to this literature here, we refer to the monograph \cite{rozikov2013gibbs} and the recent survey \cite{rozikov2021gibbs} focused on Potts models for a thorough overview.
Our use of the fixed point problem (Definition \ref{def:fixedpoint}) is similar to the use of \emph{boundary laws} in the study of homogeneous Markov Gibbs measures, recalled in Section \ref{se:boundarylaws} below, but we are not aware of any prior results analogous to our connection between the infinite SDE and local equation.

\subsection{Organization of the paper}

In Section \ref{se:mainthmproofs} we prove the three main Theorems \ref{th:infSDE->local}, \ref{th:local->fixedpoint}, and \ref{th:fixedpoint->infSDE}. Section \ref{se:gibbsconnection} discusses the relationship between invariant measures of the infinite SDE \eqref{intro:SDE} and the classical formalism of Gibbs measures defined via specifications. Section \ref{se:wellposedness} proves the two well-posedness results, Theorems \ref{th:intro:wellposed-m=2} and \ref{th:intro:wellposed-m>2}. Finally, the examples from Sections \ref{se:intro:linear} and \ref{se:intro:repulsion} are developed in full detail in Sections \ref{se:linear-coeff} and \ref{se:repulsion}, respectively.

\section{Connection between local equations with original system}\label{se:mainthmproofs}

The objective of this section is to prove Theorems \ref{th:infSDE->local}, \ref{th:local->fixedpoint}, and \ref{th:fixedpoint->infSDE}  in Section \ref{subsec_SHMtolocal}, \ref{subsec_localtofix} and \ref{subsec_fixtoSHM} respectively. First, we quote two theorems of stochastic analysis that will feature in our arguments in this section as well as Section \ref{se:gibbsconnection}, the \emph{mimicking} and \emph{superposition} theorems.
The mimicking theorem is originally due to  \cite[Theorem 4.6]{gyongy1986mimicking}, and the version stated below follows from a generalization in \cite[Corollary 3.7]{brunick2013mimicking}. It is the basis for the proof of Theorem \ref{th:infSDE->local}.

\begin{theorem} \cite[Corollary 3.7]{brunick2013mimicking} \label{th:mimicking}
Let $\sigma > 0$.
Suppose $(\Omega,\F,\FF,\PP)$ is a filtered probability space supporting an $\FF$-Brownian motion $B$ of dimension $d$, an $\FF$-progressively measurable process $(b_t)_{t \ge 0}$ satisfying $\E\int_0^T|b_t|\,\d t < \infty$ for each $T > 0$, and an $\FF$-progressively measurable process $(X_t)_{t \ge 0}$ such that
\begin{align*}
X_t = X_0 + \int_0^t b_s\,\d s + \sigma W_t, \ \ t \ge 0.
\end{align*}
Let $\widehat{b} : \R_+ \times \R^d \to \R$ be any Borel measurable function satisfying
\begin{align*}
\widehat{b}(t,X_t) = \E[b_t\,|\,X_t], \ \ a.s. \ \text{ for a.e. } t \ge 0.
\end{align*}
Then there exists a weak solution $(\widehat{X}_t)_{t \ge 0}$ of the SDE
\begin{align*}
\d \widehat{X}_t = \widehat{b}(t,\widehat{X}_t)\,\d t + \sigma \d \widehat{W}_t,
\end{align*}
such that $\widehat{X}_t \stackrel{d}{=} X_t$ for each $t \ge 0$.
\end{theorem}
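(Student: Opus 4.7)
The plan is to show that the one-dimensional marginals $\mu_t := \mathrm{Law}(X_t)$ are a distributional solution of the Fokker--Planck equation driven by the projected drift $\widehat{b}$, and then invoke a superposition principle to exhibit a weak SDE solution with these same marginals. This is the standard two-step strategy behind modern proofs of mimicking theorems.

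First I would apply It\^o's formula to $f(X_t)$ for $f \in C_c^\infty(\R^d)$. Using the hypothesis $\E\int_0^T|b_t|\,\d t<\infty$ and the boundedness of $\nabla f$, the stochastic integral $\int_0^t \sigma \nabla f(X_s)\cdot \d W_s$ is a true martingale, so taking expectations yields
\[
\E f(X_t)=\E f(X_0)+\int_0^t \E\bigl[\nabla f(X_s)\cdot b_s\bigr]\,\d s+\tfrac{\sigma^2}{2}\int_0^t \E[\Delta f(X_s)]\,\d s.
\]
The defining property of $\widehat{b}$ together with the tower property then gives $\E[\nabla f(X_s)\cdot b_s] = \E\bigl[\nabla f(X_s)\cdot \widehat{b}(s,X_s)\bigr]$, which is precisely the distributional form of $\partial_t\mu_t + \nabla\cdot(\widehat{b}\,\mu_t) = \tfrac{\sigma^2}{2}\Delta\mu_t$. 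Jensen's inequality, $\E|\widehat{b}(t,X_t)|\le \E|b_t|$, then supplies the $L^1(\mu_t\,\d t)$ integrability of the drift required for the next step.

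Next I would invoke the superposition principle of Figalli--Trevisan: any non-negative distributional solution of a linear Fokker--Planck equation whose drift is $L^1$ against the measure arises as the time-marginal flow of some weak solution to the associated martingale problem. Applied to $(\mu_t,\widehat{b},\sigma)$, and using that a martingale-problem solution for a constant, non-degenerate diffusion coefficient is equivalent to a weak SDE solution, this produces exactly the desired process $\widehat{X}$.

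The hardest part is the superposition step itself, which is a non-trivial PDE/probability result and forms the essential content of Brunick--Shreve's generalization of Gy\"ongy's original argument. A secondary subtlety is that $\widehat{b}(t,\cdot)$ is only defined up to $\mu_t$-null sets, so one must verify that the construction is insensitive to the choice of Borel representative; this is immediate, since both the Fokker--Planck equation and the martingale problem see $\widehat{b}$ only through integrals against $\mu_t$. As an alternative route, one could follow Gy\"ongy's original mollification-and-compactness argument, but it typically requires stronger non-degeneracy or growth hypotheses on $b$ than stated here.
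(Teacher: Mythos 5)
The paper does not actually prove this statement: it is quoted verbatim from Brunick--Shreve \cite[Corollary 3.7]{brunick2013mimicking}, so there is no in-paper argument to compare against. Your outline is nonetheless a correct proof strategy, and it is the modern ``Fokker--Planck plus superposition'' route rather than the one in the cited source. The It\^o/tower-property step is sound: for $f \in C_c^\infty$ the stochastic integral has a bounded integrand (constant $\sigma$, bounded $\nabla f$), the drift term is integrable by $\E\int_0^T|b_t|\,\d t<\infty$, and $\E[\nabla f(X_s)\cdot b_s]=\E[\nabla f(X_s)\cdot\widehat{b}(s,X_s)]$ for a.e.\ $s$, which is all that is needed after integrating in time; conditional Jensen gives $\int_0^T\!\int|\widehat{b}(t,x)|\,\mu_t(\d x)\,\d t<\infty$, exactly the integrability the Figalli--Trevisan superposition principle requires for a constant, nondegenerate diffusion matrix. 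The only step you gloss over slightly is the passage from ``solution of the martingale problem with marginals $\mu_t$'' to ``weak solution of the SDE,'' but for constant $\sigma>0$ and measurable drift this equivalence is classical, and your remark that everything depends on $\widehat{b}$ only through $\mu_t\,\d t$-integrals correctly disposes of the null-set ambiguity in the Borel representative. By contrast, Brunick and Shreve do not use a superposition principle; they work directly on path space with a concatenation/weak-convergence construction (generalizing Gy\"ongy's PDE-based argument, which as you note needs stronger nondegeneracy/boundedness). What your route buys is brevity and consistency with the rest of this paper, which already invokes Trevisan's superposition theorem (Theorem \ref{th:superposition}) in the infinite-dimensional setting; what the original proof buys is independence from that machinery and applicability to degenerate, time-dependent diffusion coefficients beyond the constant-$\sigma$ case stated here.
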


We next quote a superposition principle for SDEs. This originates from \cite{figalli2008existence} in the case of bounded coefficients, extended to unbounded cases in \cite{trevisan,bogachev2021ambrosio}. The $\R^\infty$ version we need is due to \cite{trevisan_diss}. We will use the superposition principle in the proof of Theorem \ref{th:fixedpoint->infSDE}, to construct a solution of the infinite SDE after first constructing a (weak) solution of the associated Fokker-Planck equation.

In the following, a \emph{cylindrical function} $g : \R^{\T_m} \to \R$ is one which depends on only finitely many coordinates. {We say $g$ is smooth and of compact support if it is a smooth and compactly supported function of finitely many variables. }
We say that a probability measure $\mu$ on $\R^{\T_m}$ \emph{solves the Fokker-Planck equation associated with the infinite SDE} if
\begin{equation}
\int_{\R^{\T_m}}\sum_{v\in \T_m}\partial_{vv} g(\bm x) \, \mu(\d \bm x) = \int_{\R^{\T_m}}\sum_{v\in \T_m}\Big(U'(x_v)+\sum_{u\in N(v)}K'(x_v-x_u)\Big)\partial_{v} g(\bm x)  \, \mu(\d \bm x). \label{FKP}
\end{equation}
hold for each  smooth cylindrical function $g:\R^{\T_m}\to \R$ of compact support; note that the sums in \eqref{FKP} involve only finitely many non-zero terms.

\begin{theorem} \cite[Theorem 7.1]{trevisan_diss} \label{th:superposition}
Suppose a probability measure $\mu$ on $\R^{\T_m}$ solves the Fokker-Planck equation associated with the infinite SDE. Suppose also that there exists $p > 1$ such that
\begin{align}
\int_{\R^{\T_m}} \big(|U'(x_v)|^p + |\K'(x_v-x_u)|^p\big)\,\mu(\d \bm x) < \infty, \label{asmp:superpos-integ}
\end{align}
for every edge $(v,u)$. Then there exists a stationary solution $\bm X$ of the infinite SDE such that $\bm X_t \sim \mu$ for each $t \ge 0$.
\end{theorem}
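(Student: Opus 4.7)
The plan is to adapt the finite-dimensional superposition principle of Figalli--Trevisan to the infinite-dimensional state space $\R^{\T_m}$ via a finite-subtree exhaustion. Fix an increasing sequence of finite vertex sets $V_n \uparrow \T_m$, and let $\mu_n$ denote the projection of $\mu$ onto $\R^{V_n}$. Define the effective drift at $v \in V_n$ by
\begin{align*}
b_n^v(\bm x_{V_n}) := -U'(x_v) - \sum_{u \in N(v) \cap V_n}\K'(x_v-x_u) - \sum_{u \in N(v) \setminus V_n} \E\bigl[\K'(x_v-X^u)\,\bigm|\,\bm X_{V_n} = \bm x_{V_n}\bigr],
\end{align*}
where $\bm X \sim \mu$. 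Testing \eqref{FKP} against smooth cylindrical functions supported on $V_n$ and applying Fubini shows that $\mu_n$ is a stationary weak solution of the finite-dimensional Fokker--Planck equation on $\R^{V_n}$ with drift $b_n$. Conditional Jensen together with \eqref{asmp:superpos-integ} gives $b_n \in L^p(\mu_n)$.

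Next, I would apply the classical finite-dimensional superposition theorem (Figalli; Trevisan) to obtain, for each $n$, a stationary weak solution $\bm Y^{(n)}$ of the finite SDE $\d Y^{(n),v}_t = b_n^v(\bm Y^{(n)}_t)\,\d t + \sqrt{2}\,\d W^{(n),v}_t$ on $\R^{V_n}$, with $\bm Y^{(n)}_t \sim \mu_n$ for every $t \ge 0$ and independent Brownian drivers $(W^{(n),v})_{v \in V_n}$. Viewing the law of $\bm Y^{(n)}$ as a probability measure on $C(\R_+; \R^{V_n})$, embed it into $C(\R_+;\R^{\T_m})$ by sampling the complementary coordinates from $\mu(\,\cdot\,|\,\bm Y^{(n)}_0)$ and keeping them time-constant. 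With the product topology on $\R^{\T_m}$, these embedded path laws are tight because every coordinate marginal is stationary and equal to $\mu$ on cylinders of size $\le |V_n|$. Extract a subsequential weak limit $\Pi$ by Prokhorov applied coordinate-wise.

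Finally, I would identify $\Pi$ as the law of a stationary weak solution of \eqref{intro:SDE} through the martingale problem. For any smooth, compactly supported cylindrical $g$ depending on a finite vertex set $W$, and for all $n$ large enough that $W \cup N(W) \subset V_n$, the effective drift $b_n$ coincides with the true drift of the infinite SDE on $W$, so $g(\bm Y^{(n)}_t) - g(\bm Y^{(n)}_0) - \int_0^t \mathcal{L}g(\bm Y^{(n)}_s)\,\d s$ is an exact martingale, where $\mathcal{L}$ is the generator of \eqref{intro:SDE}. Passing to the weak limit and using \eqref{asmp:superpos-integ} with $p>1$ for uniform integrability of the drift, the limiting object is a $\Pi$-martingale for every such $g$. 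A standard argument converts this martingale problem into a weak solution of the SDE with independent Brownian drivers, and stationarity $\bm X_t \sim \mu$ is inherited from each $\bm Y^{(n)}$.

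The main obstacle is the tightness and limit extraction on the infinite product space: one must embed the finite-$V_n$ dynamics into $C(\R_+;\R^{\T_m})$ so that all time-marginals remain equal to $\mu$ while avoiding spurious concentration at the boundary, and then one must justify that the limiting noise at distinct vertices is independent (rather than merely a martingale solution). The stationary marginal $\mu$ and the $L^p$ control from \eqref{asmp:superpos-integ} are precisely the ingredients needed to push the conditional-expectation correction in $b_n^v$ through the limit via martingale convergence on the increasing $\sigma$-algebras $\sigma(\bm X_{V_n})$.
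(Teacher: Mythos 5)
The paper does not prove this statement at all: Theorem \ref{th:superposition} is imported verbatim from Trevisan's dissertation (cited as [Theorem 7.1]), so there is no internal argument to compare against. What you have written is, in effect, a sketch of a proof of that cited external result, and the route you take --- projecting $\mu$ to finite vertex sets with conditional-expectation-corrected drifts, invoking the finite-dimensional Figalli--Trevisan superposition principle, tightness in the product path space $C(\R_+;\R^{\T_m})$, and identification of the limit through the martingale problem --- is essentially the standard projective-limit strategy by which such $\R^\infty$ superposition principles are established in the literature. In that sense your outline is sound, and the first step (that $\mu_n$ solves the finite-dimensional stationary Fokker--Planck equation with drift $b_n$, with $b_n\in L^p(\mu_n)$ by conditional Jensen and \eqref{asmp:superpos-integ}) is correct as stated.

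However, the steps you compress carry the real technical weight, and as written they leave genuine gaps. First, the identification of the limit cannot proceed by weak convergence alone, because $U'$ and $\K'$ are only measurable (indeed only a.e.\ defined); the argument must exploit that the relevant cylinder time-marginals of every $\bm Y^{(n)}$ at every time are the \emph{same} fixed projection of $\mu$, so that the drift can be approximated in $L^1(\mu)$ (or $L^p(\mu)$) by continuous functions with an error uniform in $n$ and $t$ --- your appeal to ``uniform integrability'' gestures at this but does not supply it. Second, your embedding with frozen outside coordinates does \emph{not} have full time-$t$ law $\mu$ for $t>0$ (the frozen coordinates are coupled to time $0$), so every statement about marginals and about the martingale property must be routed through cylinders contained in $V_n$; this works, but it should be said, since otherwise the claim ``all time-marginals remain equal to $\mu$'' is false as stated. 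Third, upgrading from a solution of the martingale problem (tested on smooth compactly supported cylindrical $g$) to a weak solution of \eqref{intro:SDE} with \emph{independent} Brownian drivers requires applying the martingale property to the unbounded test functions $x_v$ and $x_vx_u$ via localization, computing the quadratic covariations, and invoking L\'evy's characterization; this is standard for additive noise but is not contained in the class of test functions you use, and it is exactly the point you flag but do not resolve. Filling in these three points would turn your sketch into a self-contained proof of the quoted theorem, which the paper itself deliberately does not attempt.
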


\subsection{From the infinite SDE to the local equation}\label{subsec_SHMtolocal}
We first prove Theorem \ref{th:infSDE->local}. Suppose $(X^v)_{v \in \T_m}$ is a SHM solution of the infinite SDE \eqref{intro:SDE}. Fix an edge $(u,v)$ in $\T_m$. Define 
\begin{align*}
b_1(x,y) &= \E\bigg[U'(X^u_t) + \sum_{w\in N(u) } \K'(X_t^u-X_t^w) \,\bigg|\, X_t^u=x,X_t^v=y\bigg] \\
	&= U'(x) +  \sum_{w\in N(u) } \E\big[\K'(X_t^u-X_t^w) \,\big|\, X_t^u=x,X_t^v=y\big],
\end{align*}
noting that the conditional expectations make sense thanks to \eqref{asmp:infSDE-integ},
and similarly
\begin{align*}
b_2(x,y) &= \E\bigg[U'(X^v_t) + \sum_{w\in N(v) } \K'(X_t^v-X_t^w) \,\bigg|\, X_t^u=x,X_t^v=y\bigg] \\
	&= U'(y) +  \sum_{w\in N(v) } \E\big[\K'(X_t^v-X_t^w) \,\big|\, X_t^u=x,X_t^v=y\big].
\end{align*}
We apply the mimicking Theorem \ref{th:mimicking} to find a weak solution $(X,Y)$ of the SDE system
\begin{equation}\label{Equ:tempmimick}
	\begin{cases}
	\d X_t = -b_1(X_t,Y_t) \, \d t + \sqrt{2} \, \d W_t,
	\\
	\d Y_t = -b_2(X_t,Y_t)  \,\d t  + \sqrt{2}  \,\d B_t,
	\end{cases}
\end{equation}
which satisfies $(X_t,Y_t)\stackrel{d}{=}(X_t^u,X_t^v)$ for each $t \ge 0$. 
Note that the homogeneity of $(X^v_t)_{v \in \T_m}$ implies the desired symmetry,
\begin{equation*}
(X_t,Y_t)\stackrel{d}{=}(X^u_t,X^v_t)\stackrel{d}{=}(X^{v}_t,X^u_t)\stackrel{d}{=}(Y_t,X_t),
\end{equation*}
and the time-stationarity $(X_t,Y_t) \stackrel{d}{=} (X_0,Y_0)$ follows from the same property of $(X^u,X^v)$.

For $w \in N(u) \setminus \{v\}$, we know $X^w_t$ and $X^v_t$ are conditionally independent given $X^u_t$, by the assumed Markov property. In particular,
\begin{align*}
\E\big[\K'(X_t^u-X_t^w) \,\big|\, X_t^u=x,X_t^v=y\big] &= \E\big[K'(X_t^u-X_t^w) \,\big|\, X_t^u=x\big] \\
	&= \E\big[\K'(X_t^u-X_t^v) \,\big|\, X_t^u=x\big] \\
	&= \E\big[\K'(X_t-Y_t) \,\big|\, X_t=x\big],
\end{align*}
with the second to last step using the assumed homogeneity to deduce $(X^u_t,X^w_t) \stackrel{d}{=} (X^u_t,X^w_t)$.
Therefore,
\begin{align*}
b_1(x,y) &= U'(x) + \sum_{w\in N(u) } \E\big[\K'(X_t^u-X_t^w) \,\big|\, X_t^u=x,X_t^v=y\big] 	\\
	&= U'(x) + \K'(x-y)+\sum_{w\in N(u)\setminus \{v\} }\E\big[\K'\big(X_t^u-X_t^w\big)|X_t^u=x,X_t^v=y\big] 	\\
	&= U'(x) + \K'(X_t-Y_t)+(m-1)\E\big[\K'(X_t-Y_t)\,|\,X_t=x\big].
\end{align*}
A similar argument shows that
\begin{align*}
b_2(x,y) &= U'(y) +  \K'(y-x) + \sum_{w\in N(v)\setminus \{u\} }\E\big[\K'\big(X_t^v-X_t^w\big)|X_t^u=x,X_t^v=y\big] 	\\
	&= U'(y) + \K'(y-x)+(m-1)\E\big[\K'(Y_t-X_t)\,|\,Y_t=y\big].
\end{align*}
Plug this into \eqref{Equ:tempmimick} to arrive at the desired form of the local equation \eqref{intro:localeq}. \hfill\qedsymbol

\subsection{From the local equation to the fixed point problem}\label{subsec_localtofix}
We next prove Theorem \ref{th:local->fixedpoint}.
Let $(X,Y)$ be a stationary symmetric solution of the local equation. Recall that the function $f(x) = \E[\K'(X_0-Y_0)\,|\,X_0=x]$ is locally Lebesgue integrable by assumption. Note also that \eqref{asmp:localeq-integ} and Jensen's inequality imply
\begin{align*}
\E[|f(X_0)|]=\E\Big[\Big|\E\big[\K'(X_0-Y_0)|X_0\big]\Big|\Big] \le \E\big[|\K'(X_0-Y_0)|\big]<\infty.
\end{align*}
Since $F(x)=\int_0^xf(u)\,\d u$, we can then write the SDE for $(X,Y)$ as
\begin{align*}
	\begin{cases}
	\d X_t = -\big(U'(X_t) + \K'(X_t-Y_t) + (m-1) F'(X_t)\big) \, \d t + \sqrt{2} \, \d W_t,
	\\
	\d Y_t = -\big(U'(Y_t) + \K'(Y_t-X_t) + (m-1) F'(Y_t)\big)  \,\d t  + \sqrt{2}  \,\d B_t.
	\end{cases}
\end{align*}
The drift of this two-dimensional process is the negative of the gradient of the function $(x,y) \mapsto U(x)+U(y)+K(x-y)+F(x)+F(y)$, and thus one expects the invariant measure to be
\begin{equation}\label{densityexpression}
\rho(x,y) = Z^{-1}\exp\big(-U(x)-U(y)-K(x-y)-(m-1)F(x)-(m-1)F(y)\big),
\end{equation}
where $Z$ is a normalizing constant making $\int \rho = 1$. To be precise, we may deduce \eqref{densityexpression}  along with the finiteness of $Z$ from \cite[Theorem 4.1.12]{bogachev2015fokker}, because of the assumption that $U'$, $\K'$, and $f$ are in $L^q_{\mathrm{loc}}(\R)$ for some $q > 2$, and because of the assumption \eqref{asmp:localeq-integ} which yields condition (i) therein. 
To prove that $F(x)$ solves the fixed point problem, note first that 
\begin{equation}\label{integ1}
\int_{\R^2} |\partial_x\rho(x,y)|\,\d x\d y \le \int_{\R^2} \big(|U'(x)|+|\K'(x-y)|+{(m-1)}|f(x)|\big)\rho(x,y)\,\d x\d y<\infty.
\end{equation}
By Fubini's theorem, we have
\begin{equation*}
\int_\R \rho(x,y) \,\d y=\int_\R\rho(0,y) \,\d y +\int_0^x \int_\R\partial_x\rho(z,y) \,\d y \d z. 
\end{equation*}
Therefore, using  \eqref{integ1}, we deduce that $x \mapsto \int_\R \rho(x,y)\d y$ is absolutely continuous on any compact set, and
\begin{equation*}
\frac{\d}{\d x}\int_\R \rho(x,y) \,\d y= \int_\R \partial_x\rho(x,y) \,\d y, \ \ a.e. \ x.
\end{equation*}
That is,
\begin{align*}
\frac{\d}{\d x}\bigg(&e^{-U(x)-(m-1)F(x)}\int_\R e^{-U(y)-\K(x-y)-(m-1)F(y)} \,\d y\bigg) 	\\
	=&\;\frac{\d}{\d x}\big(e^{-U(x)-(m-1)F(x)}\big)\int_\R e^{-U(y)-\K(x-y)-(m-1)F(y)} \,\d y \\
	&\; {-}e^{-U(x)-(m-1)F(x)}\int_\R K'(x-y)e^{-U(y)-\K(x-y)-(m-1)F(y)} \,\d y.
\end{align*}
By the product rule, this implies
\begin{equation*}
\frac{\d}{\d x}\bigg(\int_\R e^{-U(y)-\K(x-y)-(m-1)F(y)} \,\d y\bigg)= {-}\int_\R K'(x-y)e^{-U(y)-\K(x-y)-(m-1)F(y)} \,\d y,
\end{equation*}
which leads to
\begin{align*}
\dfrac{\d}{\d x}\log  \int_\R e^{-U(y)-\K(x-y)-(m-1)F(y)}\d y &= -\frac{\int_\R \K'(x-y) e^{-U(y)-K(x-y)-(m-1)F(y)}\d y}{\int_\R e^{-U(y)-\K(x-y)-(m-1)F(y)}\d y}	\\
		&=-\frac{\int_\R \K'(x-y)\rho(x,y)\d y}{\int_\R \rho(x,y)\d y} \\
		&=-\E\big[\K'(X_t-Y_t)|X_t=x\big]\\
		&=-f(x).
\end{align*}
Because $F'=f$ a.e., we can deduce that there exists a constant $C$ such that
\begin{equation}
C-\log\int_\R e^{-U(y)-K(x-y)-(m-1)F(y)}\,\d y=F(x), \ \ a.e. \label{pf:fp1}
\end{equation} 
Finally, the integrability condition required in the Definition \ref{def:fixedpoint} follows from \eqref{pf:fp1} and the fact that $\rho(x,y)$ from \eqref{densityexpression} is a probability density:
\begin{align*}
\int_\R e^{-U(x)-mF(x)}\d x &= e^{-C}\int_{\R^2}e^{-U(x)-U(y)-K(x-y)-(m-1)F(x)-(m-1)F(y)}\d x\d y<\infty.
\end{align*}
{\ } \vskip-1cm \hfill\qedsymbol

\subsection{From the fixed point problem to the infinite SDE}\label{subsec_fixtoSHM}

We lastly prove Theorem \ref{th:fixedpoint->infSDE}. Let $F$ be a solution of the fixed point problem with associated constant $C_1$, so that
\begin{align}
F(x) = C_1 -\log\int_\R e^{-U(y)-\K(x-y)-(m-1)F(y)}\d y, \ \ a.e.\ x \in \R. \label{pf:fp-Fdef}
\end{align}
First, note that $Z$ defined in \eqref{jointdensity-normalization} is finite because
\begin{align*}
e^{-C_1}\int_{\R^2}e^{-U(x)-U(y)-K(x-y)-(m-1)F(x)-(m-1)F(y)}\d x\d y &= \int_\R e^{-U(x)-mF(x)}\d x,
\end{align*}
with the latter being finite thanks to the assumption \eqref{fixedpoint-integrability}.
Thus, the symmetric probability density function $\rho(x,y)$ is well defined by \eqref{jointdensity-thm}.
Write
\begin{align*}
\rho_X(x) := \int_\R \rho(x,y)\,\d y
\end{align*}
for the marginal.
Before we go into the proof, let us introduce two functions $l(x)$ and $Q(x,y)$ by
\begin{align}\label{def:bdy}
&l(x)=e^{-\frac{U(x)}{m}-F(x)},\qquad Q(x,y)= e^{{-C_1} -\frac{U(x)}{m}-\frac{U(y)}{m}-K(x-y)}.
\end{align}
The following corollary connects $l$ with the marginal distribution of $\rho$ and writes the fixed point problem in terms of $l$ and $Q$.
This is essentially the notion of \emph{boundary law} for Gibbs measures on trees \cite[Chapter 12]{georgii2011gibbs}, on which we elaborate in Section \ref{se:boundarylaws}.

\begin{lemma}\label{Cor:lq}
With $l(x)$ and $Q(x,y)$ defined as in \eqref{def:bdy}, we have
\begin{equation}\label{landQ}
\int_\R Q(x,y)l(y)^{m-1}\,\d y=l(x).
\end{equation}
Moreover, the marginal distribution satisfies 
\begin{equation}\label{lboundary}
\rho_X(x) =C_2^{-1}l(x)^m, \quad \text{where} \quad C_2 := Z e^{-C_1},
\end{equation}
where $Z$ and $C_1$ are defined in \eqref{jointdensity-normalization} and \eqref{pf:fp-Fdef}, respectively.
\end{lemma}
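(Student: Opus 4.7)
The plan is to derive both identities by direct substitution into the definitions of $l$ and $Q$, using the fixed point equation \eqref{pf:fp-Fdef} for $F$ to collapse the one-dimensional integrals that appear. No further analytic input should be required: once the definitions are unpacked, the fixed point equation rewritten in multiplicative form reads $\int_\R e^{-U(y)-K(x-y)-(m-1)F(y)}\,\d y = e^{C_1 - F(x)}$, and both claims reduce to bookkeeping of the exponents.

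For identity \eqref{landQ}, I would substitute the definitions of $Q(x,y)$ and $l(y)^{m-1}$ and notice that the exponents in $y$ combine as $-U(y)/m - (m-1)(U(y)/m + F(y)) = -U(y) - (m-1)F(y)$, giving
\begin{equation*}
\int_\R Q(x,y)l(y)^{m-1}\,\d y = e^{-C_1 - U(x)/m}\int_\R e^{-U(y)-K(x-y)-(m-1)F(y)}\,\d y.
\end{equation*}
Applying the fixed point equation to the inner integral replaces it with $e^{C_1 - F(x)}$, and the factors of $e^{\pm C_1}$ cancel to leave $e^{-U(x)/m - F(x)} = l(x)$, as required.

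For identity \eqref{lboundary}, I would integrate the explicit density \eqref{jointdensity-thm} in $y$, factoring out the $x$-dependent piece $e^{-U(x)-(m-1)F(x)}$ and once again recognizing the remaining $y$-integral as exactly what the fixed point equation evaluates. This yields
\begin{equation*}
\rho_X(x) = Z^{-1} e^{-U(x)-(m-1)F(x)} \cdot e^{C_1 - F(x)} = (Ze^{-C_1})^{-1} e^{-U(x) - mF(x)} = C_2^{-1} l(x)^m.
\end{equation*}

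There is no serious obstacle here; the only point to mind is that \eqref{pf:fp-Fdef} holds only for almost every $x$, so the two identities should be stated (and will be used later) in the a.e.\ sense. The integrability built into the fixed point problem (condition \eqref{fixedpoint-integrability}) together with the finiteness of $Z$ already established at the start of the proof of Theorem \ref{th:fixedpoint->infSDE} ensures that all the integrals in sight are finite, so Fubini-type issues do not intervene.
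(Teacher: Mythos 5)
Your proposal is correct and follows essentially the same route as the paper: both identities are obtained by unpacking the definitions of $l$ and $Q$, combining exponents, and invoking the fixed point equation \eqref{pf:fp-Fdef} in its multiplicative form $\int_\R e^{-U(y)-K(x-y)-(m-1)F(y)}\,\d y = e^{C_1-F(x)}$ to evaluate the remaining $y$-integral, exactly as in the paper's proof. Your added remark that the identities hold in the a.e.\ sense (inherited from \eqref{pf:fp-Fdef}) and that \eqref{fixedpoint-integrability} and the finiteness of $Z$ keep all integrals finite is a harmless, accurate refinement.
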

\begin{proof}
These follow directly from \eqref{densityexpression} and \eqref{pf:fp-Fdef} by noting that
\begin{equation*}
\int_\R Q(x,y)l(y)^{m-1}\d y = e^{{-C_1}-\frac{U(x)}{m}}\int_\R e^{-U(y)-K(x-y)-(m-1)F(y)}\d y = e^{ -\frac{U(x)}{m}-F(x)}=l(x),
\end{equation*}
and also
\begin{align*}
\int_\R \rho(x,y)\d y &= Z^{-1} e^{-U(x)-(m-1)F(x)} \int_\R e^{-U(y)-K(x-y)-(m-1)F(y)}\d y \\
	&=Z^{-1}e^{C_1-U(x)-mF(x)}=Z^{-1}e^{ C_1}l(x)^m.
\end{align*}
\vskip-.5cm
\end{proof}

Now we aim to define a measure on $\R^{\T_m}$ with the help of $l(x)$ and $Q(x,y)$.
{In the following, let $\o \in \T_m$ denote an arbitrary root vertex, let $\T_m^k$ denote the closed ball of radius $k$ around $\o$ in $\T_m$, and let $\pi(v)$ denote the parent of a vertex $v$ in $\T_m \setminus \{\o\}$, defined as the unique neighbor of $v$ which belongs to the shortest path from $v$ to $\o$.}
We begin by inductively defining a measure $\mu_k$ on $\R^{\T_m^k}$ with  density function $p_k$. Note that $\T_m^0=\{\o\}$, so it makes sense to identify $\R^{\T_m^0} \cong \R$. Denote the boundary by $\partial \T_m^k := \T^k_m \setminus \T^{k-1}_m$ for $k \in \N$. Define
\begin{equation}\label{Def:p_k}
	\begin{aligned}
	&p_0(x)=C_2^{-1}l(x)^m,	\\
	&p_k\big(\bm x_{\T_m^k}\big)=C_2^{-1} \bigg(\prod_{v\in \partial \T_m^k}l(x_v)^{m-1}\bigg)\prod_{v \in \T_m^k \setminus\{\o\}} Q(x_{\pi(v)},x_v),\quad k\in \N.
	\end{aligned}
\end{equation}
Then, $p_0(x)$ defines a probability measure on $\R$ thanks to \eqref{lboundary}. Our next lemma shows that $p_k$ is indeed a density function on $\R^{\T_m^k}$ and it is consistent in the sense that the measure $\mu_n(d\bm{x})=p_n(\bm{x})\d \bm{x}$ restricted to $\T_m^k$ is equal to $\mu_k$ for each $k\leq n$:

\begin{lemma} \label{le:consistent}
For $k\in \N$ and $\bm{x}_{\T_m^{k-1}} \in \R^{\T_m^{k-1}}$, we have
\begin{equation*}
\int_{\R^{\partial \T^k_m}}p_k\big(\bm x_{\T_m^k}\big)\prod_{u\in \partial \T^k_m}\d x_u = p_{k-1}\big(\bm x_{\T_m^{k-1}}\big).
\end{equation*}
\end{lemma}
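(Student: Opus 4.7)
The plan is to prove this by direct computation, separating the integration variables from the frozen ones and then applying the key identity \eqref{landQ} from Lemma \ref{Cor:lq}.

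First I would split the products in the definition \eqref{Def:p_k} of $p_k$ according to whether the vertex lies in the old ball $\T_m^{k-1}$ or in the new shell $\partial \T_m^k$. Each $u \in \partial \T_m^k$ contributes exactly one factor of $l(x_u)^{m-1}$ (it lies in $\partial \T_m^k$) and exactly one factor of $Q(x_{\pi(u)},x_u)$ (it is a non-root vertex with parent $\pi(u) \in \partial \T_m^{k-1} \cup \{\o\}$). All remaining factors depend only on coordinates indexed by $\T_m^{k-1}$. Thus
\begin{equation*}
p_k(\bm x_{\T_m^k}) = C_2^{-1} \bigg(\prod_{v \in \T_m^{k-1}\setminus\{\o\}} Q(x_{\pi(v)},x_v)\bigg) \prod_{u \in \partial \T_m^k} l(x_u)^{m-1} Q(x_{\pi(u)},x_u).
\end{equation*}

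Next, since the factors indexed by distinct $u \in \partial \T_m^k$ involve disjoint integration variables, Fubini's theorem reduces the integral to a product of one-dimensional integrals. Applying the identity $\int_\R Q(x_{\pi(u)},x_u) l(x_u)^{m-1}\,\d x_u = l(x_{\pi(u)})$ from \eqref{landQ} term by term yields
\begin{equation*}
\int_{\R^{\partial \T_m^k}} p_k(\bm x_{\T_m^k}) \prod_{u\in\partial \T_m^k}\d x_u = C_2^{-1} \bigg(\prod_{v \in \T_m^{k-1}\setminus\{\o\}} Q(x_{\pi(v)},x_v)\bigg) \prod_{u \in \partial \T_m^k} l(x_{\pi(u)}).
\end{equation*}

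Finally, I would invoke the combinatorics of the regular tree to rewrite the last product in terms of vertices of $\T_m^{k-1}$. For $k \ge 2$, each vertex $w \in \partial \T_m^{k-1}$ has exactly $m-1$ children in $\partial \T_m^k$, so $\prod_{u \in \partial \T_m^k} l(x_{\pi(u)}) = \prod_{w \in \partial \T_m^{k-1}} l(x_w)^{m-1}$, which matches the boundary factor in the definition of $p_{k-1}$. For the base case $k = 1$, the root $\o$ has all $m$ neighbors in $\partial \T_m^1$, so the product becomes $l(x_\o)^m$, and since the $Q$-product is empty we recover $p_0(x_\o) = C_2^{-1} l(x_\o)^m$ from \eqref{lboundary}. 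I do not anticipate any real obstacle here, since both ingredients (the eigenfunction relation \eqref{landQ} and the factorization) are already in place from Lemma \ref{Cor:lq}; the only point requiring a moment of care is checking that the exponent bookkeeping on $l$ works out correctly at the boundary, which is precisely what the $m-1$ children per non-root vertex accounts for.
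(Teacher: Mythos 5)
Your proof is correct and follows essentially the same route as the paper: factor $p_k$ into the part supported on $\T_m^{k-1}$ and the shell factors $l(x_u)^{m-1}Q(x_{\pi(u)},x_u)$, integrate the shell using \eqref{landQ}, and then use that each vertex of $\partial\T_m^{k-1}$ has exactly $m-1$ children (respectively, the root has $m$ children in the base case $k=1$). No gaps.
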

\begin{proof}
For $k=1$, the equation \eqref{landQ} yields
\begin{align*}
\int_{\R^{\partial\T_m^1}}\prod_{u\in\partial\T_m^1} Q(x_0,x_u)l(x_u)^{m-1}\d x_u= \prod_{u\in\partial\T_m^1}\int_\R Q(x_0,x)l(x)^{m-1}\d x=l(x_0)^m,
\end{align*}
because $|\partial\T_m^1|=m$.
For $k \ge 2$, note that $p_k$ factorizes as
\begin{align}
p_k\big(\bm x_{\T_m^k}\big) &= C_2^{-1}\bigg(\prod_{v \in \T_m^{k-1} \setminus\{\o\}} Q(x_{\pi(v)},x_v)\bigg)\prod_{v\in \partial \T_m^k}l(x_v)^{m-1} Q(x_{\pi(v)},x_v). \label{pf:p_k-rewrite}
\end{align}
Thus
\begin{align*}
\int_{\R^{\partial \T^k_m}}\! p_k\big(\bm x_{\T_m^k}\big)\d \bm x_{\partial \T_m^k} &= C_2^{-1}\bigg(\prod_{v \in \T_m^{k-1} \setminus\{\o\}} Q(x_{\pi(v)},x_v)\!\bigg) \int_{\R^{\partial \T_m^k}}\!\ \prod_{v\in \partial \T_m^k}l(x_v)^{m-1} Q(x_{\pi(v)},x_v)\,\d x_v \\
	&= C_2^{-1}\bigg(\prod_{v \in \T_m^{k-1} \setminus\{\o\}} Q(x_{\pi(v)},x_v)\! \bigg) \prod_{v\in \partial \T_m^k} \int_\R l(x)^{m-1} Q(x_{\pi(v)},x_v)\,\d x \\
	&= C_2^{-1}\bigg(\prod_{v \in \T_m^{k-1} \setminus\{\o\}} Q(x_{\pi(v)},x_v)\! \bigg) \prod_{v\in \partial \T_m^k} l(x_{\pi(v)}) \\
	&= C_2^{-1}\bigg(\prod_{v \in \T_m^{k-1} \setminus\{\o\}} Q(x_{\pi(v)},x_v)\! \bigg) \prod_{v\in \partial \T_m^{k-1}} l(x_v)^{m-1}.
\end{align*}
Indeed, the last step follows from the fact that $\pi(v) \in \partial \T_m^{k-1}$ whenever $v \in \partial \T_m^k$, and for any $u \in \partial \T_m^{k-1}$ there are $m-1$ choices of $v \in \partial \T_m^k$ such that $\pi(v)=u$. Recognizing the right-hand side as $p_{k-1}(\bm x_{\T_m^{k-1}})$, the proof is complete.
\end{proof}

Thanks to the consistency shown in Lemma \ref{le:consistent}, there exists by Daniell-Kolmogorov a unique probability measure $\mu$ on $\R^{\T_m}$ such that the marginal on $\R^{\T_m^k}$ equals $\mu_k$ for each $k$.
Our next lemma shows that $\mu$ solves the stationary Fokker-Planck equation corresponding to the infinite SDE, in the sense described just before Theorem \ref{th:superposition}.

\begin{proposition}\label{prop:verifydensity}
The probability measure $\mu$ satisfies the assumptions of Theorem \ref{th:superposition}.
\end{proposition}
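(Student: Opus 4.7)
The plan is to verify both hypotheses of Theorem \ref{th:superposition} for $\mu$: the Fokker--Planck equation \eqref{FKP} and the integrability bound \eqref{asmp:superpos-integ}.

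The preliminary step is to show that for every edge $(v,u)$ of $\T_m$, the bivariate marginal of $\mu$ on $(x_v,x_u)$ coincides with the density $\rho(x,y)$ from \eqref{jointdensity-thm}. Pick $k$ large enough that $\{v,u\} \subset \T_m^k$; by Lemma~\ref{le:consistent}, $\mu$ restricted to $\T_m^k$ has density $p_k$. Because $\K$ is even, $Q(x,y) = Q(y,x)$, so the product $\prod_{w \neq \o} Q(x_{\pi(w)},x_w)$ in \eqref{Def:p_k} is really a product over the undirected edges of $\T_m^k$. One then integrates out all vertices other than $v$ and $u$ in a leaves-first order: each boundary leaf $w$ contributes $\int l(x)^{m-1}Q(x_{\pi(w)},x)\,dx = l(x_{\pi(w)})$ by \eqref{landQ}, which fuses with the sibling factors at the parent to regenerate the lower-order structure, and the procedure continues inward toward $\{v,u\}$. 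What remains is $C_2^{-1} Q(x_v,x_u)\, l(x_v)^{m-1}\, l(x_u)^{m-1}$, which equals $\rho(x_v,x_u)$ by \eqref{def:bdy} and $C_2 = Z e^{-C_1}$. Given this, \eqref{asmp:superpos-integ} is precisely the standing hypothesis \eqref{asmp:tech1'}.

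For the Fokker--Planck equation, fix a smooth cylindrical $g$ of compact support depending on coordinates in a finite set $S \subset \T_m$, and choose $k$ large enough that $S \cup \bigcup_{v \in S} N(v) \subset \T_m^{k-1}$. This ensures no $v \in S$ lies in $\partial \T_m^k$, so the boundary factors $l^{m-1}$ in \eqref{Def:p_k} do not involve any $x_v$ with $v \in S$. The key computation is the pointwise identity
\[
-\partial_{x_v} \log p_k(\bm x_{\T_m^k}) = U'(x_v) + \sum_{u \in N(v)} \K'(x_v - x_u), \qquad v \in S.
\]
The dependence of $p_k$ on $x_v$ comes exclusively from the $m$ edge factors $Q(x_{\pi(v)},x_v)$ and $Q(x_v,x_{v'})$ associated with the $m$ neighbors of $v$ (one parent and $m-1$ children, or $m$ children if $v=\o$). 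Since $Q(x,y) = \exp(-C_1 - U(x)/m - U(y)/m - \K(x-y))$ and $\K'$ is odd, each such factor contributes exactly $U'(x_v)/m + \K'(x_v - x_u)$ to $-\partial_{x_v} \log p_k$; summing over the $m$ neighbors yields the identity.

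I then integrate by parts in $x_v$ for each $v \in S$: this is justified by the local absolute continuity of $p_k$ in each variable (inherited from the standing assumption that $e^{-U}$ and $e^{-\K}$ are locally absolutely continuous) and the compact support of $g$ in $x_v$, while the required integrability of $\big(U'(x_v) + \sum_{u \in N(v)} \K'(x_v-x_u)\big)\partial_v g$ against $p_k$ follows from the edge-marginal computation combined with \eqref{asmp:tech1'}. Since $\partial_v g \equiv \partial_{vv} g \equiv 0$ for $v \notin S$, the resulting identity extends to a sum over all of $\T_m$, yielding \eqref{FKP}. The main obstacle is the combinatorial bookkeeping in the derivative identity: the factor $1/m$ in the symmetric split $U(x)/m + U(y)/m$ inside $Q$ is precisely calibrated so that any interior vertex $v$, which appears in exactly $m$ edge factors, receives total contribution $m \cdot (U'(x_v)/m) = U'(x_v)$. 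This is also why it is essential to keep $v$ off $\partial \T_m^k$, for otherwise the spurious boundary factor $l(x_v)^{m-1}$ would spoil the identity.
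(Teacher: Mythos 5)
Your proposal is correct and follows essentially the same structure as the paper's proof. Both arguments verify the Fokker--Planck equation by integrating by parts against the finite-dimensional density and computing the log-derivative via the calibrated split $U(x)/m + U(y)/m$ inside $Q$ (you use $p_k$ with $S\subset\T_m^{k-1}$, the paper uses $p_{k+1}$ with $g$ depending on $\T_m^k$, which accomplishes the same thing of keeping the relevant vertices off the boundary), and both obtain \eqref{asmp:superpos-integ} by identifying the edge-marginal of $\mu$ with $\rho$ and invoking \eqref{asmp:tech1'}; your leaves-first computation of that edge marginal spells out a step the paper leaves implicit.
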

\begin{proof}
We first show that \eqref{FKP} holds for every smooth cylindrical function $g$ of compact support.
We may write such a $g$ as $g(\bm{x}) = g_k(\bm{x}_{\T_m^k})$ for some smooth function $g_k : \R^{\T_m^k} \to \R$ of compact support. Noting that the neighbors of vertices in $\T_m^k$ are contained in $\T_m^{k+1}$, we must show
\begin{equation*}
\begin{aligned}
 \int_{\R^{\T_m^{k+1}}} &\sum_{v\in \T_m^k}\partial_{vv} g_k\big(\bm x_{\T_m^{k}}\big)\,\mu_{k+1}\big(\d \bm x_{\T_m^{k+1}}\big) 
 \\
 &=\int_{\R^{\T_m^{k+1}}}\sum_{v\in \T_m^k}\Big(U'(x_v)+\sum_{u\in N(v)}K'(x_v-x_u)\Big)\partial_{v} g_k\big(\bm x_{\T_m^{k}}\big) \,\mu_{k+1}\big(\d \bm x_{\T_m^{k+1}}\big) .
 \end{aligned}
\end{equation*}
Recall that $p_{k+1}$ is the density of $\mu_{k+1}$. Fix $v \in \T_m^k$ and integrate by parts to get
\begin{align*}
 \int_{\R^{\T_m^{k+1}}} \partial_{vv} g_k\big(\bm x_{\T_m^{k}}\big) \,\mu_{k+1}\big(\d \bm x_{\T_m^{k+1}}\big) &= -\int_{\R^{\T_m^{k+1}}} \partial_v g_k\big(\bm x_{\T_m^{k}}\big) \partial_v  \log p_{k+1}\big(\bm x_{\T_m^{k+1}}\big) \,\mu_{k+1}\big(\d \bm x_{\T_m^{k+1}}\big).
\end{align*}
Note that this integration by parts is justified by the standing assumption, which ensures that that $e^{-U}$ and $e^{-\K}$ are locally absolutely continuous as well as strictly positive a.e.
{Note that $Q$ is symmetric in its two variables, which implies $\partial_1 Q(x,y)=\partial_2Q(y,x)$, if $\partial_i$ denotes the derivative in the $i^\text{th}$ variable.}
By definition of $p_{k+1}$ in \eqref{Def:p_k},  we have
\begin{align*}
\partial_{v}\log p_{k+1}\big(\bm x_{\T_m^{k+1}}\big)&=\sum_{u\in N(v)}\frac{\partial_1 Q(x_v,x_u)}{Q(x_v,x_u)}   \\
	&= \sum_{u\in N(v)}\Big(-\frac{U'(x_v)}{m}-K'(x_v-x_u)\Big)  \\
	&= -U'(x_v)-\sum_{u\in N(v)}K'(x_v-x_u). 
\end{align*}
Therefore, 
\begin{equation*}
	\begin{aligned}
	\int_{\R^{\T_m^{k+1}}} \partial_{vv} g_k&\big(\bm x_{\T_m^{k}}\big)\,\mu_{k+1}\big(\d \bm x_{\T_m^{k+1}}\big)
  \\
  &=\int_{\R^{\T_m^{k+1}}} \Big(U'(X_v)+\sum_{u\in N(v)}K'(x_v-x_u)\Big)\partial_{v} g_k(\bm x_{\T_m^{k}}\big)\,\mu_{k+1}\big(\d \bm x_{\T_m^{k+1}}\big).
	\end{aligned}
\end{equation*}
Sum over $v\in \T_m^{k}$ to deduce the Fokker-Planck equation.

It remains to check the claimed integrability condition \eqref{asmp:superpos-integ}.
Let $\{u,v\}$ be any edge. Since the image of $\mu$ by $x \mapsto (x_u,x_v)$ equals $\rho(x,y)\d x \d y$,  recalling the form of $\rho$ from \eqref{jointdensity-thm}, we see that \eqref{asmp:superpos-integ} follows immediately from \eqref{asmp:tech1'}.
\end{proof}

Thanks to Proposition \ref{prop:verifydensity} and Theorem \ref{th:superposition}, there exists a stationary solution $\bm X$ of the infinite SDE, with $\bm X_t \sim \mu$ for each $t \ge 0$. Note that the required integrability \eqref{asmp:infSDE-integ} holds because Proposition \ref{prop:verifydensity} showed that \eqref{asmp:superpos-integ} holds. The next lemma will check the Markov property and homogeneity, showing that $\bm{X}$ is in fact an SHM solution.

\begin{lemma} \label{le:Markov-autinv}
The stationary solution $\bm X$ of the infinite SDE constructed above is in fact an SHM solution.
\end{lemma}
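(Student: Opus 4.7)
The plan is to derive a closed-form expression for the marginal density of $\mu$ on every finite subtree $T\subset\T_m$, from which both the homogeneity and the Markov property will follow. Specifically, I claim that the marginal of $\mu$ on $\R^T$ has density
\[
\rho_T(\bm x_T) = C_2^{-1} \prod_{\{u,v\} \in E(T)} Q(x_u,x_v) \prod_{v \in T} l(x_v)^{m - d_T(v)},
\]
where $E(T)$ is the edge set of $T$ and $d_T(v)$ the degree of $v$ in $T$. The base case $T=\T_m^k$ follows directly from \eqref{Def:p_k}, using $d_{\T_m^k}(v)=m$ for $v\in\T_m^{k-1}$ and $d_{\T_m^k}(v)=1$ for $v\in\partial\T_m^k$, together with the symmetry of $Q$ to rewrite the product over directed parent-child edges as one over undirected edges. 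To extend to an arbitrary finite subtree, I would argue by iterated leaf-peeling: if $T'=T\cup\{u\}$ with $u$ a leaf of $T'$ attached to $w\in T$, then the factors of $\rho_{T'}$ involving $x_u$ are $Q(x_w,x_u)l(x_u)^{m-1}$, and the boundary law \eqref{landQ} yields $\int Q(x_w,x_u)l(x_u)^{m-1}\,dx_u=l(x_w)$. This extra factor combines with the existing $l(x_w)^{m-d_T(w)-1}$ in $\rho_{T'}$ to produce $l(x_w)^{m-d_T(w)}$, exactly matching $\rho_T$. A standard inductive argument shows that, since any subtree $T\subsetneq S$ of a finite tree $S$ admits a leaf of $S$ lying outside $T$, every finite subtree $T\subset\T_m$ is reachable from some $\T_m^k$ by a finite sequence of such leaf removals.

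Homogeneity follows at once: the right-hand side of the formula for $\rho_T$ depends on $T$ only through its abstract graph data $(E(T),(d_T(v))_{v\in T})$, which is preserved by every automorphism $\varphi$ of $\T_m$. Hence $\mu$ and $\mu\circ\varphi^{-1}$ have identical marginals on every finite subtree, and therefore coincide as measures on $\R^{\T_m}$. For the Markov property, let $A,B,S\subset\T_m$ be finite disjoint with $S$ separating $A$ from $B$, let $T$ be the smallest subtree containing $A\cup B\cup S$, and let $T_1,\ldots,T_r$ be the connected components of $T\setminus S$. Since $S$ separates $A$ from $B$ in $\T_m$ and hence in $T$, the index set splits as $I_A\sqcup I_B\sqcup I_0$ with $A\subset\bigcup_{i\in I_A}T_i$ and $B\subset\bigcup_{i\in I_B}T_i$. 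Every edge of $T$ either lies inside some $T_i$, lies inside $S$, or has one endpoint in $S$ and the other in some $T_i$; grouping the first and third kinds into the corresponding $T_i$-factor and collecting the $S$-internal edges together with the vertex terms for $v\in S$ into a common piece $g(\bm x_S)$, one obtains a factorization $\rho_T(\bm x_T) = g(\bm x_S)\prod_i h_i(\bm x_{T_i},\bm x_S)$. Integrating out $\bm x_{T\setminus(A\cup B\cup S)}$ preserves this factorization, since every such coordinate belongs to a unique $T_i$, and yields the conditional independence of $\bm X^A_0$ and $\bm X^B_0$ given $\bm X^S_0$.

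The chief difficulty is the inductive leaf-peeling bookkeeping: one must verify that the powers of $l(x_v)$ at each retained vertex accumulate correctly through successive applications of \eqref{landQ} so as to produce the exponent $m-d_T(v)$, and that the argument applies to subtrees not containing the distinguished root $\o$ used in the construction of $\mu$. Once the marginal formula is in place, both the Markov property and the homogeneity are transparent consequences of its edge-product structure and its dependence on $T$ through abstract graph data alone.
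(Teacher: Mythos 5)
Your argument is correct, and it reorganizes the paper's proof into a cleaner, more uniform form. The paper establishes homogeneity by a two-case argument: automorphisms fixing the root $\o$ (where invariance of $p_k$ is immediate from \eqref{Def:p_k}) and ``shifts'' sending $\o$ to a neighbor $\widetilde\o$ (where the density on the shifted ball $\widetilde\T_m^k$ is computed by integrating $p_{k+1}$ over $\T_m^{k+1}\setminus\widetilde\T_m^k$, repeatedly invoking \eqref{landQ}), and then composes these to get all automorphisms. The shift computation is precisely leaf-peeling from $\T_m^{k+1}$ down to $\widetilde\T_m^k$. You extract this mechanism into a general marginal formula
\[
\rho_T(\bm x_T)=C_2^{-1}\prod_{\{u,v\}\in E(T)}Q(x_u,x_v)\prod_{v\in T}l(x_v)^{m-d_T(v)}
\]
valid for every finite subtree $T$. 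Since this formula depends only on the abstract graph data of $T$, it is invariant under all automorphisms at once, eliminating the case split, and it exhibits the edge-factorization needed for the Markov property directly on the minimal subtree containing $A\cup B\cup S$ (the paper instead reads this off the ball densities $p_k$, which also suffices). The bookkeeping you flag as the chief difficulty is indeed the crux, and you have it right: removing a leaf $u$ attached to $w$ replaces $Q(x_w,x_u)\,l(x_u)^{m-1}$ by $l(x_w)$ via \eqref{landQ}, raising the exponent at $w$ from $m-d_{T'}(w)=m-d_T(w)-1$ to $m-d_T(w)$. The reachability of any finite subtree $T$ from a large ball $\T_m^k\supset T$ by repeated leaf removal holds because, in a finite tree $S$, every vertex lies on a path between two leaves; so if all leaves of $S$ lay in a connected subtree $T\subset S$, uniqueness of tree paths would force $S=T$, and hence $T\subsetneq S$ guarantees a leaf of $S$ outside $T$. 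Your approach yields a mildly stronger intermediate statement than the paper states (an explicit marginal on every finite subtree), at the cost of slightly more up-front bookkeeping; the paper gets away with less generality by working only with balls about varying roots.
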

\begin{proof}
The Markov property follows quickly from the form of the marginal density $p_k$ of $\bm X^{\T_m^k}_0$. Indeed, the formula \eqref{Def:p_k} exhibits a factorization of $p_k$ over edges of $\T_m^k$, which easily implies that $\bm X^{\T_m^k}_0$ is a Markov random field, in the sense that the second bullet point of Definition \ref{def:infiniteSDE} holds true when $S$, $A$, and $B$ are contained in $\T_m^k$. 

We next prove homogeneity. Recall that $\o$ denoted an arbitrarily chosen ``root" vertex of $\T_m$, with $\T_m^k$ denoting the ball of radius $k$ around $\o$ in $\T_m$. It suffices to show that
\begin{align}
(X^v_0)_{v \in \T_m^k} \stackrel{d}{=} (X^{\varphi(v)}_0)_{v \in \T_m^k} \label{pf:automorphism}
\end{align}
for every $k \in \N$ and every automorphism $\varphi$ of $\T_m$. First, suppose $\varphi$ fixes the root, $\varphi(\o)=\o$. Then $\varphi|_{\T_m^k}$ is an automorphism of the induced subgraph $\T_m^k$. By definition of $p_k$ in \eqref{Def:p_k}, it is clear that
\begin{align*}
p_k((x_v)_{v \in \T_m^k}) = p_k((x_{\varphi(v)})_{v \in \T_m^k}),
\end{align*}
and we deduce \eqref{pf:automorphism} in this case. With this first case out of the way, it suffices to prove that \eqref{pf:automorphism} holds just for those automorphisms which map $\o$ to a neighbor of $\o$, which we will call \emph{shifts}. Indeed, a general automorphism can be achieved as a composition of shifts and perhaps an automorphism which fixes the root. With this in mind, let $\widetilde{\o}$ be a neighbor of $\o$, and let $\varphi$ be an automorphism with $\varphi(\o)=\widetilde{\o}$. Let $\widetilde\T_m^k$ be the ball of radius $k$ with respect to root $\widetilde{\o}$. It suffices to show that the density function of $(X^v_0)_{v \in \widetilde\T_m^k}$ is also $p_k$ for all $k\in\N$. Note that $\widetilde\T_m^k\subset \T_m^{k+1}$  and $\T_m^{k+1} \setminus \widetilde\T_m^k = \partial \T_m^{k+1} \setminus \partial \widetilde\T_m^k$. We may then compute the density of $(X^v_0)_{v \in \widetilde\T_m^k}$ by
\begin{align*}
\widetilde{p}_k\big(\bm x_{\widetilde\T_m^k}\big) &:= \int_{\R^{\T_m^{k+1} \setminus \widetilde\T_m^k}} p_{k+1} \big(\bm x_{\T_m^{k+1}}\big) \d  \bm x_{\T_m^{k+1} \setminus \widetilde\T_m^k} 	\\
	&= C_2^{-1}\int_{\R^{\T_m^{k+1} \setminus \widetilde\T_m^k}} \bigg(\prod_{v\in \partial \T_m^{k+1}}l(x_v)^{m-1}\bigg)\prod_{v \in \T_m^{k+1} \setminus\{\o\}} Q(x_{\pi(v)},x_v)\, \d \bm x_{\T_m^{k+1} \setminus \widetilde\T_m^k}  \\
	&= C_2^{-1} \left(\prod_{v \in \partial \widetilde\T_m^k \cap \partial \T_m^{k+1}}  l(x_v)^{m-1} \right)\prod_{v \in \widetilde\T_m^{k} \setminus\{\o\}} Q(x_{\pi(v)},x_v) \\
	&\qquad \cdot \int_{\R^{\T_m^{k+1} \setminus \widetilde\T_m^k}} \prod_{v \in \partial \T_m^{k+1} \setminus \partial \widetilde\T_m^k} l(x_v)^{m-1} Q(x_{\pi(v)},x_v) \, \d \bm x_{\T_m^{k+1} \setminus \widetilde\T_m^k},
\end{align*}
where $\pi(v)$ still denotes the parent of vertex $v$ when the root still $\o$. Factorizing and applying \eqref{landQ}, the remaining integral expression can be written as
\begin{align}
\prod_{v \in \partial \T_m^{k+1} \setminus \partial \widetilde\T_m^k}\int_\R  l(x_v)^{m-1} Q(x_{\pi(v)},x_v) \, \d x_v = \prod_{v \in \partial \T_m^{k+1} \setminus \partial \widetilde\T_m^k}l(x_{\pi(v)}). \label{pf:aut1}
\end{align}
For each $v \in \partial \T_m^{k+1} \setminus \partial \widetilde\T_m^k$, the the cardinality of the set $S_v:=\{u \in \partial \T_m^{k+1} \setminus \partial \widetilde\T_m^k : \pi(u)=\pi(v)\}$ is exactly $m-1$; that is, the factor $l(x_{\pi(v)})$ appears $m-1$ times. Moreover, the set $\{\pi(v) : v \in \partial\T_m^{k+1} \setminus \partial \widetilde\T_m^k\}$ is exactly equal to $\partial \widetilde\T_m^k \setminus \partial\T_m^{k+1}$. Hence, the right-hand side of \eqref{pf:aut1} equals
\begin{align*}
\prod_{v \in \partial \widetilde\T_m^k \setminus \partial\T_m^{k+1}}l(x_v)^{m-1}.
\end{align*}
Putting it together, we find
\begin{align*}
\widetilde{p}_k\big(\bm x_{\widetilde\T_m^k}\big) = C_2^{-1} \left(\prod_{v \in \partial \widetilde\T_m^k}  l(x_v)^{m-1} \right)\prod_{v \in \widetilde\T_m^{k} \setminus\{\o\}} Q(x_{\pi(v)},x_v) = p_k\big(\bm x_{\widetilde\T_m^k}\big),
\end{align*}
which completes the proof.
\end{proof}

Lemma \ref{le:Markov-autinv} completes the desired construction of a SHM solution from the given solution $F$ of the fixed point problem. 
We finally prove the last claim of the theorem, namely that $F'$ is a version of $\E[\K'(X-Y)\,|\,X=x]$ when $(X,Y) \sim \rho$. Since $F > 0$, it is equivalent to show that $(e^{-F})'(x)=-\E[\K'(X-Y)\,|\,X=x] e^{-F(x)}$. Note by \eqref{pf:fp-Fdef} that
\begin{equation}
e^{-F(x)} = e^{-C_1} \int_\R e^{-U(y)-\K(x-y)-(m-1)F(y)} \d y = \int_\R g(x,y) \d y, \label{pf:fp->inf-Ffinal} 
\end{equation}
where we define $g(x,y) =  e^{-U(y)-\K(x-y)-(m-1)F(y)}$.
For any compact set $A \subset \R$, H\"older's inequality yields
\begin{align*}
&\int_A \int_\R  |\partial _x g(x,y)| \d y \d x 
\\
& \ \ \leq \int_A \bigg(\int_\R |K'(x-y)|^m e^{-U(y)-mK(x-y)}\d y\bigg)^{1/m} \bigg(\int_\R e^{-U(y)-mF(y)}\d y\bigg)^{(m-1)/m} \d x,
\end{align*}
which is finite by assumptions \eqref{asmp:tech2} and \eqref{fixedpoint-integrability}.
That is, the function $x\mapsto  \int_\R \partial_x g(x,y) \d y$ belongs to $L^1_{\mathrm{loc}}(\R)$, and it is the derivative of $e^{-F}$ in the sense of distributions. We deduce that $e^{-F}$ is absolutely continuous, with
\begin{align*}
(e^{-F})'(x) &= -  e^{-C_1} \int_\R K'(x-y) e^{-U(y)-\K(x-y)-(m-1)F(y)} \d y,
\end{align*}
for a.e.\ $x \in \R$. Hence, from \eqref{pf:fp->inf-Ffinal} and the definition of $\rho$ from \eqref{densityexpression}, we get
\begin{align*}
F'(x)=-\frac{(e^{-F})'(x)}{e^{-F(x)}} &= \frac{\int_\R K'(x-y) e^{-U(y)-\K(x-y)-(m-1)F(y)}\,\d y}{\int_\R e^{-U(y)-\K(x-y)-(m-1)F(y)} \,\d y} \\
	&= \frac{\int_\R K'(x-y) \rho(x,y) \,  \d y}{\int_\R\rho(x,y) \,  \d y},
\end{align*}
for  a.e.\ $x \in \R$, which completes the proof. \hfill \qedsymbol

\section{Connection with Gibbs measure} \label{se:gibbsconnection}

In this section, we explain how the  infinite SDE   \eqref{intro:SDE} relates to the classical concept of a (infinite volume) Gibbs measure associated with a specification on $\T_m$.

We define admissible $(U,W)$ and a Gibbs specification $\gamma$ for $(U,W)$ as follows.
Let $\TT$ be the set of all finite subset of $\T_m$. 
For $T \in \TT$, let $E_T$ denote the set of unordered edges of $\T_m$ for which at least one vertex belongs to $T$. We call $(U,W)$ admissible if 
\begin{equation*}
  Z_T(\bm x_{T^c})=\int_{\R^T}\exp\Big(-\sum_{v\in T}U(x_v)- \sum_{\{u,v\} \in E_T}K(x_u-x_v)\Big) \,\d \bm x_T<\infty
\end{equation*}
for all $T \in \TT$, $\bm x_{T^c}\in\R^{T^c}$. We then define the probability measure $\gamma_T(\cdot\,|\,\bm x_{T^c})$ on $\R^{T}$ by
\begin{equation*}
	\begin{aligned}
	\gamma_T(A\,|\,\bm x_{T^c})=\frac{1}{Z_T(\bm x_{T^c})}\int_A & \exp\Big(-\sum_{v\in T}U(x_v)-  \sum_{\{u,v\} \in E_T}K(x_u-x_v)\Big)\,\d \bm x_T,
	\end{aligned}
\end{equation*}
for Borel sets $A \subset \R^{T}$. Note that $\gamma_T(A\,|\,\bm x_{T^c})$ and $Z_T(\bm x_{T^c})$ depend on $\bm x_{T^c}$ only through $\bm x_{\partial T}$, where $\partial T$ is the set of vertices in $T^c$ which are adjacent to some vertex in $T$.
The set of Gibbs measures associated with the specification $\gamma$ is defined by
\begin{align*}
	\GG(\gamma) =\big\{\mu \in \P(\R^{\T_m}) : \mu(\cdot\,|\,\bm x_{T^c})=\gamma_T(\cdot\,|\, \bm x_{T^c}), \;\mu\text{-a.s.}, \ \forall T\in\TT\big\}.
\end{align*}
That is, $\mu \in \GG(\gamma)$ if and only if $\gamma_T(\cdot\,|\,\bm x_{T^c})$ is a version of the conditional law of $\bm x_T$ given $\bm x_{T^c}$ under $\mu$, for each $T \in \TT$.

\begin{theorem} \label{th:gibbsconnection}
Assume $(U,W)$ is admissible.
If $\bm X=(X^v)_{v \in \T_m}$ is a SHM solution of the infinite SDE (in the sense of Definition \ref{def:infiniteSDE}), and if
\begin{align}
f(x) = \E[\K'(X^v_0-X^u_0)\,|\, X^v_0=x] \label{gibbsthm-def-f}
\end{align}
as well as $U'$ and $\K'$ are locally bounded, then
the law of $\bm X_0$ belongs to $\GG(\gamma)$. Conversely, if $\mu \in \GG(\gamma)$ satisfies
\begin{align}
\int_{\R^{\T_m}}(|U'(x_v)|^p + |\K'(x_v-x_u)|^p) \,\mu(d\bm x) < \infty, \label{asmp:Gibbs-int-p}
\end{align}
for every edge $(u,v)$ in $\T_m$ for some $p > 1$, then $\mu$ is the law of a stationary solution of the infinite SDE.
\end{theorem}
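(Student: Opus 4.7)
My plan is to treat the two implications separately; both reduce to results established earlier in the paper.

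For the forward direction I chain Theorems \ref{th:infSDE->local} and \ref{th:local->fixedpoint}. Starting from the SHM solution $\bm{X}$, Theorem \ref{th:infSDE->local} produces a stationary symmetric solution of the local equation whose two-vertex joint law agrees with that of any adjacent pair $(X^v_0,X^u_0)$. The hypothesis places $U'$, $\K'$, and $f$ in $L^\infty_{\mathrm{loc}}(\R)\subset L^q_{\mathrm{loc}}(\R)$, so Theorem \ref{th:local->fixedpoint} supplies a solution $F$ of the fixed point problem and identifies the density of $(X^v_0,X^u_0)$ as $\rho$ of \eqref{jointdensity-thm}. The homogeneity and Markov property of $\bm{X}$ then pin down the $\T_m^k$-marginal density: rooting $\T_m^k$ at an arbitrary vertex $\o$, the tree-Markov product formula gives
\[
p(\bm{x}_{\T_m^k})=\rho_X(x_\o)\prod_{v\in\T_m^k\setminus\{\o\}}\frac{\rho(x_{\pi(v)},x_v)}{\rho_X(x_{\pi(v)})},
\]
and a direct bookkeeping computation, matching $U$-, $K$-, and $F$-factors vertex by vertex while using that $\o$ has $m$ children and each other interior vertex has $m-1$, shows this coincides with $p_k$ from \eqref{Def:p_k}. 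With $p_k$ in hand, for any finite $T\subset\T_m$ I pick $k$ so large that $T\subset\T_m^{k-1}$; by the Markov property the conditional of $\bm{x}_T$ given $\bm{x}_{T^c}$ coincides with that given $\bm{x}_{\T_m^k\setminus T}$, which is read off as a ratio of values of $p_k$. Since $T\subset\T_m^{k-1}$, every edge of $E_T$ lies in $\T_m^k$ while the boundary $l^{m-1}$-factors in $p_k$ are independent of $\bm{x}_T$; the conditional therefore reduces to $\gamma_T(\cdot\mid\bm{x}_{T^c})$, proving $\mu\in\GG(\gamma)$.

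For the converse direction I would apply the superposition principle, Theorem \ref{th:superposition}. Its integrability hypothesis \eqref{asmp:superpos-integ} is exactly \eqref{asmp:Gibbs-int-p}, so only the stationary Fokker-Planck equation \eqref{FKP} remains to be verified. To do so, test against a smooth cylindrical $g$ of compact support depending only on coordinates in a finite set $T$. Because $\mu\in\GG(\gamma)$, conditionally on $\bm{x}_{T^c}$ the vector $\bm{x}_T$ has density proportional to $\exp\bigl(-\sum_{w\in T}U(x_w)-\sum_{\{u,w\}\in E_T}K(x_u-x_w)\bigr)$. For each $v\in T$, the $x_v$-dependence is $\exp\bigl(-U(x_v)-\sum_{u\in N(v)}K(x_v-x_u)\bigr)$ times a factor independent of $x_v$; by the standing assumption, $e^{-U}$ and $e^{-\K}$ are locally absolutely continuous, so this function is locally absolutely continuous in $x_v$ with weak derivative $-(U'(x_v)+\sum_{u\in N(v)}K'(x_v-x_u))$ times itself. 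A one-dimensional integration by parts in $x_v$, with boundary terms vanishing by compact support of $g$ and integrability of the integrands secured by \eqref{asmp:Gibbs-int-p} via Fubini, converts $\int\partial_{vv}g$ into $\int(U'(x_v)+\sum_{u\in N(v)}K'(x_v-x_u))\partial_v g$ against the conditional density. Integrating over the $\mu$-marginal in $\bm{x}_{T^c}$ and summing over $v\in T$ yields \eqref{FKP}, and Theorem \ref{th:superposition} then produces the stationary solution.

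The principal obstacle is the forward direction, specifically the bookkeeping identification of the Markov product with $p_k$: one must carefully combine the $F$-factors produced by the ratios $\rho/\rho_X$ with the vertex-degree counting to obtain a clean $l(x_v)^{m-1}$ at every boundary vertex and exact cancellation at every interior vertex, essentially re-running the computation underlying Lemma \ref{le:consistent}. Once this is in place, the DLR equations fall out by inspection. The converse direction is more routine: the Gibbs density has precisely the gradient structure needed for integration by parts, and the standing assumption together with \eqref{asmp:Gibbs-int-p} provides enough regularity to make every step rigorous.
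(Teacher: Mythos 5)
Your proposal is correct, and on the converse direction it follows the paper's argument essentially verbatim (conditional Gibbs density, one-dimensional integration by parts vertex-by-vertex, then the superposition principle). On the forward direction, however, you take a genuinely different route. The paper applies the mimicking theorem directly to $\bm X$ restricted to each finite set $T$, uses the Markov property to simplify the conditional drift, and then invokes the Bogachev uniqueness theorem to read off the marginal density $p_T$ for each finite $T$; the DLR equations are then obtained by dividing $p_T$ by $p_{T\setminus S}$. You instead chain Theorems \ref{th:infSDE->local} and \ref{th:local->fixedpoint} (one application of mimicking and Bogachev, on a single edge) to obtain the fixed point $F$ and the edge density $\rho$, reconstruct the ball-marginal $p(\bm x_{\T_m^k})$ via the tree-Markov product over parent--child pairs, and match it against $p_k$ from \eqref{Def:p_k}; the DLR conditionals are then read off from $p_k$ directly. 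Both approaches are valid. Yours leans more heavily on the machinery already built in Section \ref{subsec_fixtoSHM} and requires re-doing the exponent bookkeeping implicit in Lemma \ref{le:consistent}; the paper's approach is more self-contained but invokes mimicking and Bogachev once per finite $T$. One genuine advantage of your route worth noting: by working only with the connected balls $\T_m^k$, every conditional drift simplification $\E[\K'(X^v_t-X^u_t)\mid\bm X^T_t]=f(X^v_t)$ you implicitly use (inside Theorem \ref{th:infSDE->local}) is justified by the tree separation property, whereas the paper applies the same simplification to arbitrary finite $T$ and to $T\setminus S$, where separation can fail if the set is disconnected; your ball-based reconstruction sidesteps that subtlety entirely. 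A small point to tighten up when writing this out: the passage from the conditional given $\bm x_{\T_m^k\setminus T}$ to the conditional given the full $\bm x_{T^c}$ uses that the former is eventually constant in $k$ (depending only on $\bm x_{\partial T}$) together with martingale convergence, since the finite-set Markov property does not apply directly to the infinite complement.
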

\begin{proof}
First, suppose we are given a SHM solution of the infinite SDE with law $\mu$. For adjacent vertices $v,u \in \T_m$, define $f$ by \eqref{gibbsthm-def-f};
the homogeneity and Markov properties ensure that a version can be defined which does not depend on the choice of $(v,u)$.
Let $T\in \TT$. For $v \in T$ and $u \sim v$, the homogeneity and Markov properties imply that
\begin{align*}
\E[\K'(X^v_t-X^u_t)\,|\,X^T_t] = \begin{cases}
\K'(X^v_t-X^u_t) &\text{if } u \in T \\
f(X^v_t) &\text{if } u \notin T.
\end{cases}
\end{align*}
By the mimicking Theorem \ref{th:mimicking}, there exists a weak solution $\bm Y^T:=(Y^v)_{v\in T}$ of the SDE
\begin{equation*}
	\d Y_t^v=-\Big(U'(Y_t^v)+|N(v)\setminus T| \, f(Y_t^v)+\!\!\sum_{u\in N(v)\cap T}K'(Y_t^v-Y_t^u)\Big)\d t+\sqrt 2 \, \d \widehat W_t^v, \quad v \in T,
\end{equation*}
which satisfies $\bm X_t^{T} \overset{d}{=} \bm Y_t^{T}$ for all $t \ge 0$.
We may apply \cite[Theorem 4.1.12]{bogachev2015fokker}, thanks to the assumption that $U'$, $\K'$, and $f$ are locally bounded {along with the integrability assumption \eqref{asmp:infSDE-integ} (which yields condition (i) in \cite[Theorem 4.1.12]{bogachev2015fokker}),} to deduce that the density function $p_T$ of the marginal distribution on any finite subset $T\subset \T_m$ is 
\begin{equation*}
	p_T(\bm x_T)=C_T  \exp\Big(-\sum_{v\in T}\big(U(x_v)+|N(v)\setminus T|\, F(x_v)\big)-  \sum_{\{u,v\} \in E_T^\circ }K(x_v-x_u)\Big),
\end{equation*}
where $F$ is an antiderivative of $f$, $C_T$ is a normalizing constant, and $E_T^\circ$ is the set of unordered edges for which both vertices belong to $T$. Now consider two finite subsets $S,T\subset \T_m$ such that $S\cup \partial S \subset T$. The same argument with $T \setminus S$ in place of $T$ shows that
\begin{align*}
p_{T\setminus S}(\bm x_{T\setminus S})=C_{T\setminus S} \exp\Big(\!-\!\!\sum_{v\in T\setminus S}\big(U(x_v)&+|N(v)\setminus (T\setminus S)|\, F(x_v)\big) - \!\!\!\!\! \sum_{\{u,v\} \in E_{T\setminus S}^\circ }K(x_v-x_u)\Big).
\end{align*}
The conditional density of $\bm X^S_t$ given $\bm X^{T \setminus S}_t$ is then
\begin{align*}
\frac{p_T(\bm x_T)}{p_{T\setminus S}(\bm x_{T\setminus S})} = \frac{C_T}{C_{T \setminus S}}\exp\Bigg( &  - \sum_{v \in T}|N(v)\setminus T|\, F(x_v) + \sum_{v \in T \setminus S}|N(v)\setminus (T\setminus S)|\, F(x_v) \\
	& -\sum_{v \in S} U(x_v) - \!\!\sum_{\{u,v\} \in E_T^\circ \setminus E_{T\setminus S}^\circ} K(x_v-x_u) \Bigg).
\end{align*}
We simplify this via three observations. First, since $S \cup \partial S  \subset T$, it holds that $N(v) \setminus T = \emptyset$ for each $v \in S$, so the first sum can be restricted to $v \in T \setminus S$. Second, since $S$ and $T^c$ are disjoint, we have
\begin{align*}
|N(v)\setminus T| - |N(v)\setminus (T\setminus S)| &= |N(v)\cap T^c| - |(N(v)\cap T^c)\cup (N(v)\cap S)| \\
	&= - |N(v)\cap S|.
\end{align*}
Lastly, $E_T^\circ \setminus E_{T\setminus S}^\circ$ is the set of (unordered) edges for which both vertices belong to $T$ and at least one belongs to $S$; this is exactly the same as $E_S$ because $S \cup \partial S  \subset T$. Combining these three observations, we find
\begin{align*}
\frac{p_T(\bm x_T)}{p_{T\setminus S}(\bm x_{T\setminus S})} = \frac{C_T}{C_{T \setminus S}}\exp\Bigg( &   \sum_{v \in T \setminus S}|N(v)\setminus S|\, F(x_v) -\sum_{v \in S} U(x_v) - \!\sum_{\{u,v\} \in E_S} K(x_v-x_u) \Bigg).
\end{align*}
For $v \in T \setminus S$, we either have $|N(v) \setminus S|=0$ if $v \notin \partial S $, or $|N(v) \setminus S|=1$ if $v \in \partial S $. Thus
\begin{align}
\frac{p_T(\bm x_T)}{p_{T\setminus S}(\bm x_{T\setminus S})} = \frac{C_T}{C_{T \setminus S}}\exp\Bigg( &   \sum_{v \in \partial S }  F(x_v) -\sum_{v \in S} U(x_v) -  \!\sum_{\{u,v\} \in E_S} K(x_v-x_u) \Bigg). \label{pf:conddens1}
\end{align}
This depends on $\bm x_{T \setminus S}$ only through $\bm x_{\partial S }$. Similarly, the quantity $Z_S(\bm x_{S^c})$ depends on $\bm x_{S^c}$ only through $\bm x_{\partial S }$, and we see that
\begin{align*}
{Z_S(\bm x_{S^c})} =  \frac{C_{T\setminus S}}{C_T}  \exp\bigg(-\sum_{v\in\partial S}F(x_v)\bigg)<\infty,
\end{align*}
for all $S\in \TT$ and $\bm x_{S^c}\in \R^{S^c}$. Because the conditional density of $\bm X^S_t$ given $\bm X^{T \setminus S}_t$ does not depend on the choice of $T \supset S \cup \partial S $ and depends only on $\bm X^{\partial S }_t$, it must be the same as the conditional density of $\bm X^S_t$ given $\bm X^{S^c}_t$. This density, as \eqref{pf:conddens1} shows, is exactly the same as that of $\gamma_T(\cdot\,|\,\bm x_{S^c})$.

Conversely, fix $\mu \in \GG(\gamma)$. For $k \in \N$, let $\T_m^k$ again denote the closed ball of radius $k$ around an (arbitrarily) fixed root vertex in $\T_m$. 
As in the proof of Proposition \ref{prop:verifydensity}, we will show that, for any smooth cylindrical function  $g : \R^{\T_m} \to \R$ of compact support,
\begin{align}\label{pfgibs:equa1}
\int_{\R^{\T_m}} \sum_{v\in \T_m}\partial_{vv} g(\bm x) \,\mu(\d \bm x) = \int_{\R^{\T_m}} \sum_{v\in \T_m}\Big(U'(x_v)+\sum_{u\in N(v)}K'(x_v-x_u)\Big)\partial_{v} g(\bm x)  \,\mu(\d \bm x).
\end{align}
Such a function $g$, by definition, may be represented as $g(\bm x)=g_k(\bm x_{\T_m^k})$ for a smooth function $g_k : \R^{\T_m^k} \to \R$ of compact support. Once this is established, the existence of a stationary solution of the infinite SDE with law $\mu$ follows from the superposition principle (in the form of Theorem \ref{th:superposition}) and the integrability assumption \eqref{asmp:Gibbs-int-p}.

The identity \eqref{pfgibs:equa1} will essentially follow from the law of total expectation and integration by parts. By definition of $\GG(\gamma)$, the measure $\mu(\cdot\,|\,\bm x_{(\T_m^k)^c})$ on $\R^{\T_m^k}$ has density
\begin{align*}
	p(\bm x_{\T_m^k} \,|\, \bm x_{\partial \T_m^k}) &=\frac{1}{Z_{\T_m^k}(\bm x_{\partial \T_m^k})}\exp\bigg(-\sum_{v\in \T_m^k}U(x_v)-\!\sum_{\{u,v\}\in E_{\T_m^k}} K(x_v-x_u)\bigg).
\end{align*}
Integrating by parts, we have for each $v \in \T_m^k$ and $\bm x_{\partial \T_m^k} \in \R^{(\T_m^k)^c}$ that
\begin{align*}
\int_{\R^{\T_m^k}} &\partial_{vv} g_k(\bm x_{\T_m^k}) p(\bm x_{\T_m^k} \,|\, \bm x_{\partial \T_m^k})\,\d\bm x_{\T_m^k} \\
	&= \int_{\R^{\T_m^k}}\partial_v g_k(\bm x_{\T_m^k}) \Big( U'(x_v) + \sum_{u \in N(v)}\K'(x_v-x_u) \Big) p(\bm x_{\T_m^k} \,|\, \bm x_{\partial \T_m^k})\,\d\bm x_{\T_m^k}.
\end{align*}
Summing over $v \in \T_m^k$ and integrating with respect to the marginal law of $\mu$ on $\R^{(\T_m^k)^c}$ yields \eqref{pfgibs:equa1}.
\end{proof}

\begin{remark}
It is clear from the proof that the first part of Theorem \ref{th:gibbsconnection} remains true if $(X^v)_{v \in \T_m}$ is merely required to be a stationary solution of the infinite SDE satisfying the Markov property. Automorphism invariance is not needed.
\end{remark}

\subsection{Boundary laws} \label{se:boundarylaws}

Our fixed point problem in Definition \ref{def:fixedpoint} can be viewed as a reformulation of the notion of \emph{boundary law} for Gibbs measures on trees, which is described in detail in \cite[Chapter 12]{georgii2011gibbs} in the case of a finite state space. To see the connection, let $S \subset \R$ be finite. Consider the matrix $Q\in \R_+^{S \times S}$ defined by
\begin{equation*}
	Q_{x,y}=e^{-U(x)-U(y)-K(x-y)}.
\end{equation*} 
As in \cite[Chapter 12, Definition 12.10]{georgii2011gibbs}, let us define a \emph{boundary law for $Q$} to be a family of edge-indexed vectors $\{l^{u,v}: v \in \T_m, \, u\in N(v)\} \subset \R_+^S$ satisfying
\begin{equation*}
	l^{u,v}_x=c_{u,v}\prod_{w\in N(u)\setminus\{v\}}\big(Ql^{w,u}\big)_x
\end{equation*}
for some constants $c_{u,v} > 0$. In the homogeneous case, where $l^{u,v}=l$ does not depend on $(u,v)$, the above equation becomes $l_x=c((Ql)_x)^{m-1}$. In the finite state space case, boundary laws are known to characterize Gibbs measures having the Markov chain property; see \cite[Theorem 12.12]{georgii2011gibbs}, or \cite[Corollary 12.17]{georgii2011gibbs} for the homogeneous case.
The functions we called $Q$ and $l$ in Section \ref{subsec_fixtoSHM} can be seen as infinite-dimensional counterparts of the matrix/vector pair $(Q,l)$ described in this paragraph, and Lemma \ref{Cor:lq} gives the natural counterpart to $l_x=c((Ql)_x)^{m-1}$ in our setting of continuous state space.

\section{Existence and uniqueness} \label{se:wellposedness}

\subsection{Existence and uniqueness, $m=2$} \label{se:wellposedness:m=2}
This section proves Theorem \ref{th:intro:wellposed-m=2}. We treat existence and uniqueness separately.

\vskip.2cm

\noindent\textbf{Uniqueness.}  Let $F^1 (x)$, $F^2 (x)$ be two solutions of the fixed point problem.
For $k=1,2$,  the statement that $F^k$ solves the fixed point problem means that $F^k(x)$ is a scalar shift of the function
\begin{align*}
x \mapsto -\log\int_\R e^{-U(y)-\K(x-y)-F^k(y)}\,\d y.
\end{align*}
Define the corresponding joint density function $\rho^k$ on $\R^2$ by
\begin{equation*}
	\rho^k (x,y)=C^k e^{-U(x)-U(y)-K(x-y)-F^k (x)-F^k (y)}, 
\end{equation*}
with $0 < C^k < \infty$a normalizing constant. Note that the fixed point equation implies that $C^k$ equals a positive constant times
\begin{align*}
\Big(\int_{\R^2} e^{-U(x)-2F^k (x)}\d x\Big)^{-1},
\end{align*}
which is finite thanks to \eqref{fixedpoint-integrability}. 
By Theorem \ref{th:fixedpoint->infSDE} and \eqref{asmp:tech2}, $F^k$ is absolutely continuous, and its derivative $f^k$ satisfies
\begin{align}
f^k(x) &= \frac{\int_\R \K'(x-y)\rho^k(x,y)\,\d y}{\int_\R  \rho^k(x,y)\,\d y}. \label{pf:m=2,1}
\end{align}
Next, note that $\rho^k(x,y)=\rho^k(y,x)$, and thus the marginal densities  are equal; we define
\begin{align*}
\rho^k_X(x) = \int_\R \rho^k(x,y) \,\d y = \int_\R \rho^k(y,x) \, \d y.
\end{align*}
Since $F^k$ is a solution of the fixed point problem, this marginal distribution takes the form
\begin{align*}
	\rho^k _X(x)=C_X^k e^{-U(x)-2F^k (x)},
\end{align*}
for a normalizing constant $0 < C_X^k < \infty$.

Let $H$ denote the relative entropy, defined for two positive probability densities $\nu$ and $\mu$ on $\R^k$ by
\begin{align*}
H(\nu\,|\,\mu) := \int_{\R^k} \nu(x) \log \frac{\nu(x)}{\mu(x)} \, \d x.
\end{align*}
We compute
\begin{align*}
H(\rho^1 \,|\,\rho^2 ) &= \int_{\R^2} \big(F^2 (x)+F^2 (y)-F^1 (x)-F^1 (y)\big)\rho^1 (x,y)\,\d x\,\d y+\log\frac{C^1 }{C^2 } \\
	&= 2\int_{\R} \big(F^2 (x) -F^1 (x)\big)\rho^1 _X(x)\,\d x +\log\frac{C^1 }{C^2 }.
\end{align*}
Similarly,
\begin{align*}
H(\rho^2 \,|\,\rho^1 ) &=  2\int_{\R} \big(F^1 (x) -F^2 (x)\big)\rho^2_X(x)\,\d x +\log\frac{C^2}{C^1}.
\end{align*}
For the marginal distribution, we compute similarly
\begin{align*}
H(\rho^1 _X\,|\,\rho^2 _X) &= 2\int \big(F^2 (x)-F^1 (x)\big)\rho^1_X(x)\,\d x +\log\frac{C_X^1 }{C_X^2 }, \\
H(\rho^2 _X\,|\,\rho^1 _X) &= 2\int \big(F^1 (x)-F^2 (x)\big)\rho^2_X(x)\,\d x+\log\frac{C_X^2 }{C_X^1 },
\end{align*}
Hence,
\begin{align}
\begin{split} H(\rho^1 \,|\,\rho^2 )+ H(\rho^2 \,|\,\rho^1 ) &= 2\int_{\R} \big(F^2 (x) -F^1 (x)\big)(\rho^1 _X(x)-\rho^2 _X(x))\,\d x \\
 	&= H(\rho^1 _X\,|\,\rho^2 _X)+H(\rho^2 _X\,|\,\rho^1 _X) . \end{split} \label{pf:m=2,2}
\end{align}
On the other hand, it follows from the chain rule for relative entropy that $H(\rho^1 \,|\,\rho^2 ) \ge  H(\rho^1_X \,|\,\rho^2_X )$ and $H(\rho^2 \,|\,\rho^1 ) \ge  H(\rho^2_X \,|\,\rho^1_X )$, with equality if and only if $\rho^1(x,y)/\rho^1_X(x) = \rho^2(x,y)/\rho^2_X(x)$ for almost every $x,y$. Hence, and \eqref{pf:m=2,2} and \eqref{pf:m=2,1} imply $f^1=f^2$ a.e., so $F^1-F^2$ is constant, and the claimed uniqueness of the fixed point problem follows. 

Under the additional assumption that $U',\K'\in L^q_{\mathrm{loc}}(\R)$ for some $q > 2$, the claimed uniqueness of the stationary symmetric solution of the local equation  follows from Theorem \ref{th:local->fixedpoint}, and then the uniqueness of the SHM solution of the infinite SDE follows from Theorem \ref{th:infSDE->local}.

\vskip.2cm

\noindent\textbf{Existence.} 
Define the positive measure $\mu(\d x) = e^{-U(x)}\d x$.
Define the linear operator
\begin{align*}
S\varphi(x) := \int_\R  \varphi(y) e^{-\K(x-y)-U(y)}\,\d y.
\end{align*}
Under the additional assumption \eqref{asmp:m=2}, $S$ is a Hilbert-Schmidt (and thus compact) operator on the Hilbert space $L^2(\mu)$ with positive spectral radius; see \cite[Theorem VI.23]{reed1978methods}.
Note that $S$ is positive, in the sense that $S\varphi \ge 0$ whenever $\varphi \ge 0$.
By the Krein-Rutman theorem {(see \cite[Theorem A]{bonsall1958linear} for a concise English reference)}, there exists an nonzero eigenfunction $\varphi \ge 0$ with positive eigenvalue $\lambda > 0$. Namely,
\begin{align*}
\varphi(x) = \lambda S\varphi(x) = \lambda \int_\R  \varphi(y) e^{-\K(x-y)-U(y)}\,\d y, \ \ a.e.\ x.
\end{align*}
It follows from this identity, and the fact that $\varphi$ is not identically zero, that in fact $\varphi > 0$ a.e. Then $F:=-\log \varphi$ satisfies the fixed point equation \eqref{fixedpoint} with $C=-\log\lambda$.

Under the additional assumption \eqref{asmp:tech1}, the existence of a SHM solution of the infinite SDE follows from Theorem \ref{th:fixedpoint->infSDE}, and the existence of a stationary symmetric solution of the local equation follows from Theorem \ref{th:infSDE->local}. \hfill \qedsymbol

\subsection{Existence and uniqueness, $m>2$} \label{se:wellposedness:m>2}
This section proves Theorem \ref{th:intro:wellposed-m>2}. 
Without loss of generality, we may assume $U(0)=\K(0)=0$.
For shorthand, define $a=\inf_x U''(x)$, $b=\inf_x \K''(x)$, and $c=\|\K''\|_\infty$. Because $U$ and $\K$ are even, we have $U'(0)=\K'(0)=0$, and thus
\begin{align}
U(x)\geq \frac{ax^2}{2},\qquad \K(x)\geq \frac{bx^2}{2}, \qquad |\K'(x)| \le c|x|. \label{UKbounds}
\end{align}
In light of the assumption \eqref{asmp:m>2-thm},
\begin{align}
a> m(c-b). \label{a>m(c-b)}
\end{align}

{ \ }

\noindent\textbf{Existence for the fixed point.}
Denote  $d= b-\frac{c}{m-1}$, {and note that $d \le c(m-2)/(m-1) < c$ because $b \le c$}. Consider the  space $\mathcal{S}$ of functions $F \in C^1(\R)$ with absolutely continuous  first derivative and with $d \le F'' \le c$ and satisfying $F(0)=F'(0)=0$. Note that any $F \in \mathcal{S}$ satisfies $dx^2/2 \le F(x) \le cx^2/2$.

Consider the map $T$ defined by
\begin{equation*}
	T(F)(x)=\log\int_\R e^{-U(y)-K(y)-(m-1)F(y)}\d y-\log\int_\R e^{-U(y)-K(x-y)-(m-1)F(y)}\d y.
\end{equation*}
Note that both integrals are finite for every $x \in \R$ if $F\in\mathcal{S}$, because \eqref{a>m(c-b)} implies
\begin{align}
a+b+(m-1)d = a+mb-c>(m-1)c \ge 0, \label{a+b+(m-1)d}
\end{align}
which in combination with \eqref{UKbounds} implies
\begin{align*}
\int_\R e^{-U(y)-K(x-y)-(m-1)F(y)}\d y&\leq\int_\R e^{-\frac{ay^2}{2}-\frac{b(x-y)^2}{2}-\frac{(m-1)dy^2}{2}}\d y<\infty.
\end{align*}
Moreover, any $F \in \mathcal{S}$ automatically satisfies the integrability condition \eqref{fixedpoint-integrability}; indeed,  \eqref{a>m(c-b)} implies $a+md = a+mb-\frac{m}{m-1}c>\frac{m(m-2)}{m-1}c\geq 0$, which yields
\begin{equation*}
\int_\R e^{-U(y)-mF(y)}\d y\leq \int_\R e^{-\frac{ay^2}{2}-\frac{mdy^2}{2}}<\infty.
\end{equation*}
It is then apparent that a fixed point of $T$ in $\mathcal{S}$ is also solution of the fixed point problem in the sense of Definition \ref{def:fixedpoint}.

We then focus on finding a fixed point of $T$ in $\mathcal{S} \subset C(\R)$, which we accomplish by applying Schauder's fixed point theorem.
Equip $C(\R)$ with the topology of uniform convergence on compacts.
It is straightforward to check that $T$ is continuous in $C(\R)$. 
{To see that $\mathcal{S}$ is compact, we express it as $\mathcal{S}=I(\mathcal{S}')$, where $I : C(\R) \to C(\R)$ is defined by $I(G)(x):=\int_0^xG(y)\d y$, and $\mathcal{S}'$ is the set of  $G \in C(\R)$ such that $G(0)=0$ and such that $G(x)-\frac{c+d}{2}x$ Lipschitz with constant $(c-d)/2$. The set $\mathcal{S}'$ is compact by the Arzel\`a-Ascoli theorem, and $I$ is clearly continuous, so $\mathcal{S}$ is compact.}

It remains to show that $T(\mathcal{S})\subset \mathcal{S}$. Let $F_0\in \mathcal{S}$ and define $F=T(F_0)$. That is,
\begin{equation*}
	e^{-F(x)} = \frac{\int_\R e^{-U(y)-K(x-y)-(m-1)F_0(y)}\d y}{\int_\R e^{-U(y)-K(y)-(m-1)F_0(y)}\d y}.
\end{equation*}
For $x \in \R$, let $\rho_0(\cdot|x)$ be the probability density function defined by
\begin{equation*}
	\rho_0(y|x)=\frac{e^{-U(y)-K(x-y)-(m-1)F_0(y)}}{\int_\R e^{-U(y')-K(x-y')-(m-1)F_0(y')}\d y'},
\end{equation*}
Since $F_0 \in \mathcal{S}$, we have $F_0(x) \ge dx^2/2$, which along with \eqref{UKbounds} yields
\begin{align*}
h_x(y) := |K'(x-y)|e^{-U(y)-K(x-y)-(m-1)F_0(y)}&\leq c|x-y| e^{-\frac{ay^2}{2}-\frac{b(x-y)^2}{2}-\frac{(m-1)dy^2}{2}}.
\end{align*}
By \eqref{a+b+(m-1)d}, it holds for each bounded interval $I \subset \R$ that $y \mapsto \sup_{x \in I} h_x(y)$ is integrable. Therefore, we may interchange the derivative and integral to get
\begin{equation*}
\frac{\d}{\d x}\int_\R e^{-U(y)-K(x-y)-(m-1)F_0(y)}\d y=\int_\R -K'(x-y)e^{-U(y)-K(x-y)-(m-1)F_0(y)}\d y.
\end{equation*}
If we define $f(x)$ by
\begin{align*}
	f(x)=\E_{\rho_0(\cdot|x)}\big[K'(x-Y)\big],
\end{align*}
where we understand in the expectation that $Y \sim \rho_0(\cdot|x)$, then
we have
\begin{equation*}
f(x)=\frac{\int_\R K'(x-y)e^{-U(y)-K(x-y)-(m-1)F_0(y)}\d y}{\int_\R e^{-U(y)-K(x-y)-(m-1)F_0(y)}\d y}= F'(x).
\end{equation*}
Note also that $F$ is even because $\K$ is, and thus $f$ is odd, and in particular $f(0)=0$. 
As for the second derivative, we may similarly take derivatives inside the integrals to get
\begin{align*}
f'(x) = & \frac{\int_\R \big( \K''(x-y) - \K'(x-y)^2 \big) e^{-U(y)-K(x-y)-(m-1)F(y)}\d y}{\int_\R e^{-U(y)-K(x-y)-(m-1)F(y)}\d y} \\
	&+ \left(\frac{\int_\R K'(x-y)e^{-U(y)-K(x-y)-(m-1)F(y)}\d y}{ \int_\R e^{-U(y)-K(x-y)-(m-1)F(y)}\d y}\right)^2.
\end{align*}
This can be rewritten more suggestively as
\begin{align*}
f'(x)&= \E_{\rho_0(\cdot|x)}[K''(x-Y)]-\text{Var}_{\rho_0(\cdot|x)}[K'(x-Y)].
\end{align*}
Bounding the variance from below by zero, we have trivially
\begin{align}
f'(x) &\le \E_{\rho_0(\cdot|x)}[K''(x-Y)] \le c. \label{pf:m>2-f'}
\end{align}
To bound the variance from above, we use a Poincar\'e inequality. 
Notice that
\begin{equation*}	
\partial_{yy}(-\log \rho_0(y|x)) =  U''(y)+K''(x-y)+(m-1)F_0''(y) \ge a+b+(m-1)d,
\end{equation*}
for $x,y \in \R$.
Recall that $d=b-\frac{c}{m-1}$ and $a\geq mc-mb$, we get
\begin{equation*}
\inf_{y\in\R}\big(\partial_{yy}(-\log \rho_0(y|x))\big)\geq (m-1)c > 0.
\end{equation*}
By the Brascamp-Lieb inequality \cite[Theorem 4.3]{brascamp2002extensions}, this lower bound implies the following Poincar\'e inequality, for all $x \in \R$:
\begin{equation*}
\text{Var}_{\rho_0(\cdot|x)}[K'(x-Y)]\leq \frac{\E_{\rho_0(\cdot|x)}[K''(x-Y)^2]}{(m-1)c}\leq \frac{c}{m-1}.
\end{equation*}
Apply this in \eqref{pf:m>2-f'}, and recall that $b=\inf \K''$, to get
\begin{equation*}
f'(x) \ge b-\frac{c}{m-1}=d.
\end{equation*}
Hence, $d \le F'' \le c$, and we deduce that $F\in \mathcal{S}$. This shows that $T(\mathcal{S})\subset \mathcal{S}$, completing the proof of existence.

{\ }

\noindent\textbf{Uniqueness for the fixed point.} We next prove uniqueness. Suppose $F$ is a solution of the fixed point problem belonging to $\mathcal{S}$, which we know to exist by the previous part. 
Define the symmetric density function 
\begin{equation*}
	\rho(x,y)=Ce^{-U(x)-U(y)-K(x-y)-(m-1)F(x)-(m-1)F(y)},
\end{equation*} 
where $0 < C < \infty$ is a normalization constant. Suppose $\widetilde{F}$ is another fixed point, with the corresponding density $\widetilde{\rho}$ given by
\begin{equation*}
	\widetilde{\rho}(x,y)=\widetilde{C}e^{-U(x)-U(y)-K(x-y)-(m-1)\widetilde{F}(x)-(m-1)\widetilde{F}(y)}.
\end{equation*}
Using \eqref{asmp:m>2-thm}, it is easy to check that \eqref{asmp:tech2} holds. By Theorem \ref{th:fixedpoint->infSDE}, we know that  
\begin{equation}
F'(x)=\E[K'(x-Y)|X=x], \quad \widetilde{F}'(x)=\widetilde\E[K'(x-Y)|X=x], \label{pf:m>2-uniq-F'}
\end{equation}
where $\E$ and $\widetilde\E$ denote expectation under $\rho$ and $\widetilde{\rho}$, respectively. We will argue by bounding the relative Fisher information,
\begin{align}
I(\widetilde{\rho}\,|\,\rho) &:= \widetilde\E\Big[ \Big|\nabla \log\frac{\widetilde{\rho}}{\rho}(X,Y)\Big|^2\Big] \nonumber \\
	&= (m-1)^2\widetilde\E\big[ (F'(X)-\widetilde{F}'(X))^2 + (F'(Y)-\widetilde{F}'(Y))^2 \big] \nonumber \\
	&= 2(m-1)^2\widetilde\E\big[ (F'(X)-\widetilde{F}'(X))^2\big], \label{pf:m>2-uniq-I1}
\end{align}
with the last line following from symmetry. 
Note that \eqref{pf:m>2-uniq-F'} implies
\begin{align*}
\big(F'(x)-\widetilde{F}'(x)\big)^2 &= \Big(\int_\R \K'(x-y)\big(\rho(y|x) -\widetilde{\rho}(y|x)\big)\,\d y\Big)^2,
\end{align*}
where we define the conditional density
\begin{align*}
\rho(y|x) := \frac{\rho(x,y)}{\int_\R \rho(x,y')\,\d y'},
\end{align*}
and similarly for $\widetilde{\rho}$. The function $\K'$ is Lipschitz with constant $\|\K''\|_\infty=c$, and thus
\begin{align}
\big|F'(x)-\widetilde{F}'(x)\big| &\le cW_1\big(\rho(\cdot|x),\widetilde{\rho}(\cdot|x)\big). \label{pf:m>2-uniq-W1}
\end{align}
Here $W_1$ denotes the Kantorovich distance, defined for two probability densities $(\nu,\mu)$ on $\R$ admitting finite first moment by
\begin{align*}
W_1(\nu,\mu) := \sup_h\int_\R h(x)(\nu(x)-\mu(x))\,\d x,
\end{align*}
where the supremum is over all 1-Lipschitz functions.
Recalling that $F'' \ge d=b-\frac{c}{m-1}$ since $F \in \mathcal{S}$, the conditional density $\rho(\cdot | x)$ satisfies
\begin{align*}
\partial_{yy}(-\log \rho(y|x)) &= U''(y)+K''(x-y)+(m-1)F''(y) \\
	&\ge a+b+(m-1)d = a + mb - c.
\end{align*}
Recall from \eqref{a+b+(m-1)d} that $a + mb - c > (m-1)c \ge 0$. This implies that $\rho(\cdot|x)$ satisfies the log-Sobolev inequality  (by a result of Bakry-\'Emery, see \cite{bakryemery} or \cite[Corollary 5.7.2]{bakryGentilLedoux})
\begin{align*}
H\big(\nu\,|\,\rho(\cdot|x)\big) \le \frac{1}{2(a + mb - c)}I\big(\nu\,|\,\rho(\cdot|x)\big),
\end{align*}
as well as (by a result of Otto-Villani \cite{ottovillani}) the transport inequality
\begin{align*}
W_1^2(\nu,\rho(\cdot|x)) \le \frac{2}{a + mb - c}H\big(\nu\,|\,\rho(\cdot|x)\big),
\end{align*}
for all $x \in \R$ and probability densities $\nu$ on $\R$. Applying these two inequalities in \eqref{pf:m>2-uniq-W1} yields
\begin{align*}
\big(F'(x)-\widetilde{F}'(x)\big)^2 &\le \frac{c^2}{(a + mb - c)^2} I\big(\widetilde{\rho}(\cdot|x)\,|\,\rho(\cdot|x)\big).
\end{align*}
Returning to \eqref{pf:m>2-uniq-I1}, we find
\begin{align}
I(\widetilde{\rho}\,|\,\rho) &\le \frac{2(m-1)^2c^2}{(a + mb - c)^2} \int_\R I\big(\widetilde{\rho}(\cdot|x)\,|\,\rho(\cdot|x)\big) \widetilde{\rho}_X(x)\,\d x, \label{pf:m>2-uniq-I2}
\end{align}
where $\widetilde{\rho}_X(x)=\int_\R \widetilde{\rho}(x,y)\,\d y$ is the marginal density. We rewrite the conditional Fisher information as an unconditional one:
\begin{align*}
2\int_\R I\big(\widetilde{\rho}(\cdot|x)\,|\,\rho(\cdot|x)\big) \widetilde{\rho}_X(x)\,\d x &= 2\int_\R \left(\int_\R \Big( \partial_y \log \frac{\widetilde{\rho}(y|x)}{\rho(y|x)}\Big)^2 \widetilde{\rho}(y|x)\,\d y \right) \widetilde{\rho}_X(x)\,\d x \\
	&= 2\int_{\R^2} \Big( \partial_y \log \frac{\widetilde{\rho}(x,y)}{\rho(x,y)}\Big)^2 \widetilde{\rho}(x,y)\,\d y \d x.
\end{align*}
By symmetry, the right-hand side equals
\begin{align*}
\int_{\R^2} \Big( \Big( \partial_x \log \frac{\widetilde{\rho}(x,y)}{\rho(x,y)}\Big)^2 + \Big( \partial_y \log \frac{\widetilde{\rho}(x,y)}{\rho(x,y)}\Big)^2 \Big)\widetilde{\rho}(x,y)\,\d y \d x = I(\widetilde{\rho}\,|\,\rho).
\end{align*}
Hence, \eqref{pf:m>2-uniq-I2} simplifies to
\begin{align*}
I(\widetilde{\rho}\,|\,\rho) &\le rI(\widetilde{\rho}\,|\,\rho), \quad \text{where} \quad r := \frac{(m-1)^2c^2}{(a + mb - c)^2}.
\end{align*}
Finally, the inequality $a + mb - c > (m-1)c$ from \eqref{a+b+(m-1)d} implies that $r < 1$, which yields $I(\widetilde{\rho}\,|\,\rho)=0$. Hence, $\widetilde{\rho}=\rho$, and by \eqref{pf:m>2-uniq-F'} we have also $F' = \widetilde{F}'$. Thus $F$ and $\widetilde{F}$ are equivalent up to an additive shift.

{\ }

\noindent\textbf{The infinite SDE and local equation.}
We finally explain how to deduce the claims about the infinite SDE and local equation. The claimed uniqueness for the stationary symmetric solution of the local equation follows from Theorem \ref{th:local->fixedpoint}, because $U'$ and $\K'$ are assumed continuous and thus locally bounded, and the claimed uniqueness for the infinite SDE follows from Theorem \ref{th:infSDE->local}. 
Existence under the additional assumption on $U$ will follow from Theorems \ref{th:fixedpoint->infSDE} and \ref{th:infSDE->local}, after we check that \eqref{asmp:tech1} holds.
The inequality $a> m(c-b)$, coming from the assumption \eqref{asmp:m>2-thm}, implies 
\begin{align*}
-U(x)-U(y)-mK(x-y)  &\le -\frac{a}{2}(x^2+y^2)-\frac{mb}{2}(x-y)^2 	\\
	&\le -\frac{mc}{2}\big(x^2+y^2-\frac{2bxy}{c}\big).
\end{align*}
Since also $|b| < c$, we deduce that $\exp(-U(x)-U(y)-m\K(x-y))$ is bounded from above by a constant times a non-degenerate Gaussian. Hence, any power of $|U'(x)| \le c_1 e^{c_2|x|^p}$ is integrable with respect to this density because $0 \le p < 2$. Similarly, any power of $|\K'(x-y)| \le c|x-y|$ is integrable. 
 \hfill \qedsymbol

\section{Linear coefficients and the spectrum of $\T_m$} \label{se:linear-coeff}

In this section, we study the explicitly solvable case of linear coefficients in the infinite SDE. We first  motivate the development by showing how to use the solution to compute the resolvent of $\T_m$, thereby recovering the famous Kesten-McKay law.

\subsection{The resolvent of $\T_m$} \label{se:resolvent}

To a countable locally finite graph one can associate natural infinite-dimensional operators, such as the adjacency operator; see \cite{MoharWoess} for background on the formalism we adopt in this section.
Consider the Hilbert space $\ell_2(\T_m)$ of functions $\T_m \to \R$, with  inner product and norm denoted $\langle \cdot,\cdot\rangle$ and $\|\cdot\|$, respectively. Let $(\bm e_v)_{v\in \T_m}$ denote the natural orthonormal basis, given by $\bm e_v(u) = \delta_{uv}$. 

Let $\bm I : \R^{\T_m} \to \R^{\T_m}$ denote the identity operator. Let $\bm{A}:\R^{\T_m}\to \R^{\T_m}$ denote the adjacency operator, defined by
\begin{equation*}
	(\bm{A} \bm{x})(v) =\sum_{u\in N(v)}x_u, \qquad \text{for } \bm{x}=(x_v)_{v \in \T_m} \in \R^{\T_m}.
\end{equation*}
Note that $\bm{A}$ maps $\ell_2(\T_m)$ into itself and defines a bounded operator thereon, because Cauchy-Schwarz implies
\begin{align*}
\|\bm{A} \bm{x}\|^2 &= \sum_{v \in \T_m} \Big(\sum_{u\in N(v)}x_u\Big)^2 \le \sum_{v \in \T_m} m\sum_{u\in N(v)}x_u^2 = m^2 \sum_{u \in \T_m} x_u^2.
\end{align*}
Moreover, $\bm{A}$ is symmetric, because $\langle \bm{e}_v, \bm{A} \bm{e}_u\rangle$ equals $1$ if $u$ and $v$ are neighbors and zero otherwise.  The spectrum of $\bm{A} : \ell_2(\T_m)\to \ell_2(\T_m)$ is well known (due to \cite{kesten1959symmetric}) to be
\begin{align}
\sigma(\bm{A}) = [-2\sqrt{m-1},2\sqrt{m-1}]. \label{sigma(A)}
\end{align}

Our focus in this section is the \emph{resolvent}, $z \mapsto \langle \bm e_v, (z\bm I-\bm A)^{-1}\bm e_v \rangle$, of the operator $\bm{A}$ on $\ell_2(\T_m)$, well-defined for $v \in \T_m$ and $z\in \R \setminus \sigma(\bm A)$. {It can be argued that this quantity does not depend on the choice of vertex $v \in \T_m$, because the graph $\T_m$ is vertex transitive.} By the spectral theorem, there exists a Borel probability measure $\mu_{\T_m}$ on $\R$ such that
\begin{align*}
\int_{\R} \frac{1}{z-r}\,\mu_{\T_m}(\d r) = \langle \bm e_v, (z\bm I-\bm A)^{-1}\bm e_v \rangle, \qquad z \in \R \setminus \sigma(\bm{A}).
\end{align*}
We call $\mu_{\T_m}$ the \emph{spectral measure} of the tree $\T_m$.
A famous result originating from the work of Kesten \cite{kesten1959symmetric} and McKay \cite{mckay1981expected} identifies this spectral measure as
\begin{align}
\mu_{\T_m}(\d x) &= \frac{m\sqrt{(4(m-1)-x^2)_+}}{2\pi(m^2-x^2)}\,\d x. \label{KestenMcKay}
\end{align}
Later, a proof was given by the resolvent method in \cite{bordenave2010resolvent}, where they identified 
\begin{align}
\langle \bm e_v, (z\bm I-\bm A)^{-1}\bm e_v \rangle &= \frac{2(m-1)}{(m-2)z +m\sqrt{z^2-4(m-1)}}, \label{KestenMcKay-resolvent}
\end{align}
from which one can recover the formula \eqref{KestenMcKay} via inverse Stieltjes transform.

We will show how the formula \eqref{KestenMcKay-resolvent} arises from certain versions of the infinite SDE \eqref{intro:SDE} with linear coefficients $U'$ and $\K'$. To properly derive \eqref{KestenMcKay} from \eqref{KestenMcKay-resolvent} via inverse Stieltjes transform would require knowledge of \eqref{KestenMcKay-resolvent} for complex $z$ in the upper half-plane.
For simplicity, we will limit our attention to recovering \eqref{KestenMcKay-resolvent} for real numbers $z > m$, but the extension to complex $z$ is not difficult.

\subsection{Connection with the infinite SDE} \label{se:infSDE-linear}

Consider $U(x)=(z-m)x^2/2$ and $\K(x)=x^2/2$, where $z \in \R$ is fixed. The infinite SDE \eqref{intro:SDE} becomes
\begin{align}
\d X^v_t &= -\Big((z-m)X^v_t + \sum_{u \in N(v)}(X^v_t-X^u_t)\Big) \d t + \sqrt{2}\,\d W^v_t \nonumber \\
	&= -\Big(z X^v_t - \sum_{u \in N(v)}X^u_t\Big) \d t + \sqrt{2}\,\d W^v_t, \qquad v \in \T_m. \label{SDE-linear}
\end{align}
The well-posedness of this SDE, an infinite-dimensional Ornstein-Uhlenbeck equation, is somewhat subtle, depending on what space one wants the process $\bm{X}_t$ to reside in.
In operator form, we may express it as
\begin{align*}
\d \bm{X}_t &= -(z\bm{I} - \bm{A})\bm{X}_t\, \d t + \sqrt{2} d\bm{W}_t,
\end{align*}
where $\bm{W}=(W^v)_{v \in \T_m}$ are independent Brownian motions. Note this is an equality between processes with values in $\R^{\T_m}$, not $\ell_2(\T_m)$, because $\bm{W}_t$ almost surely fails to reside in $\ell_2(\T_m)$.
Ignoring these subtleties, one formally expects the invariant measure of this Ornstein-Uhlenbeck process to be the centered Gaussian with covariance operator given by $(z\bm{I}-\bm{A})^{-1}$. This is heuristically why the resolvent should appear, but making this precise requires a more careful discussion of integrability issues.

We make a definition for a general random variable $\bm{Y} =(Y_v)_{v \in \T_m}$ in $\R^{\T_m}$.
Let $\R^{\T_m}_{\mathrm{fin}}$ denote the space of vectors with at most finitely many nonzero coordinates.
For any $\bm{x} \in \R^{\T_m}_{\mathrm{fin}}$, the inner product $\langle \bm{x},\bm{Y}\rangle := \sum_{v \in \T_m} x_v Y_v$ is well-defined.
Let us say that $\bm{Y}$ is \emph{$\ell_2(\T_m)$-extendable} if there exists a constant $c \ge 0$ such that
\begin{align}
\E[\langle \bm{x} ,\bm{Y}\rangle^2]  \le c\|\bm{x}\|^2, \qquad \forall \bm{x}=(x_v)_{v \in \T_m} \in \R^{\T_m}_{\mathrm{fin}}. \label{condition}
\end{align}
Recall that $\T_m^k$ denotes the ball of radius $k$ around an arbitrarily fixed root vertex in $\T_m$.
If $\bm{Y}$  is $\ell_2(\T_m)$-extendable, then for any $\bm{x} \in \ell_2(\T_m)$ 
\begin{align*}
\langle \bm{x}|_{\T_m^k},\bm{Y}\rangle = \sum_{v\in \T_m^k} x_v Y_v  
\end{align*}
is easily seen to define a Cauchy sequence in $L^2$.
We may thus define a random variable $\langle \bm{x},\bm{Y}\rangle$ as the $L^2$ limit of $\langle \bm{x}|_{\T_m^k},\bm{Y}\rangle$ as $k\to\infty$, and note that $\E[\langle \bm{x},\bm{Y}\rangle^2] \le c\|\bm{x}\|_{\ell_2(\T_m)}^2$.
\begin{proposition}\label{pr:infSDE-resolvent}
Let $z \in \R \setminus \sigma(\bm{A})$.
Let $\bm X=\big(X^v\big)_{v\in \T_m}$ be a SHM solution of the infinite SDE \eqref{SDE-linear}.
If $\bm{X}_0$ is $\ell_2(\T_m)$-extendable, then 
\begin{equation*}
\langle \bm{e}_v, (z \bm I-\bm A)^{-1} \bm{e}_v\rangle = \E\big[(X_0^v)^2\big], \qquad \forall v \in \T_m.
\end{equation*}
\end{proposition}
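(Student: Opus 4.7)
The plan is to identify the covariance operator of $\bm{X}_0$ with the resolvent $(z\bm{I}-\bm{A})^{-1}$ and then read off its diagonal entry at $v$. Define $C_{vu}:=\E[X_0^v X_0^u]$. The $\ell_2(\T_m)$-extendability hypothesis says exactly that the quadratic form $\bm{x}\mapsto\sum_{v,u}x_v C_{vu}x_u$, initially defined on $\R^{\T_m}_{\mathrm{fin}}$, extends to a bounded nonnegative self-adjoint operator $C$ on $\ell_2(\T_m)$, and the covariance identity
\begin{equation*}
\E\big[\langle\bm{x},\bm{X}_0\rangle\langle\bm{y},\bm{X}_0\rangle\big] = \langle\bm{x},C\bm{y}\rangle
\end{equation*}
holds for all $\bm{x},\bm{y}\in\ell_2(\T_m)$, where $\langle\bm{x},\bm{X}_0\rangle$ is the $L^2$ extension described just before the proposition.

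The second step is a stationary Lyapunov identity. For $\bm{x},\bm{y}\in\R^{\T_m}_{\mathrm{fin}}$, the SDE gives
\begin{equation*}
d\langle\bm{x},\bm{X}_t\rangle = -\langle(z\bm{I}-\bm{A})\bm{x},\bm{X}_t\rangle\,dt + \sqrt{2}\,d\langle\bm{x},\bm{W}_t\rangle,
\end{equation*}
with cross variation $d[\langle\bm{x},\bm{W}\rangle,\langle\bm{y},\bm{W}\rangle]_t=\langle\bm{x},\bm{y}\rangle\,dt$. Apply Itô's product rule to $\langle\bm{x},\bm{X}_t\rangle\langle\bm{y},\bm{X}_t\rangle$, take expectations (the stochastic integrals have $L^2$-integrable integrands by the uniform bound $\E[\langle\bm{x},\bm{X}_t\rangle^2]\le c\|\bm{x}\|^2$ coming from extendability and stationarity), and use time-stationarity to set the time derivative to zero. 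The resulting identity
\begin{equation*}
\langle (z\bm{I}-\bm{A})\bm{x},C\bm{y}\rangle + \langle \bm{x},C(z\bm{I}-\bm{A})\bm{y}\rangle = 2\langle \bm{x},\bm{y}\rangle
\end{equation*}
extends by continuity (using boundedness of $z\bm{I}-\bm{A}$ and $C$ on $\ell_2(\T_m)$) to all of $\ell_2(\T_m)$, giving the operator equation $(z\bm{I}-\bm{A})C + C(z\bm{I}-\bm{A}) = 2\bm{I}$.

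The decisive step is to show that $C$ and $\bm{A}$ commute. By homogeneity of $\bm{X}_0$ under automorphisms of $\T_m$, the entry $C_{vu}$ depends only on the tree distance $d(v,u)$; write $C_{vu}=c(d(v,u))$. In a tree, for vertices $v\ne u$ exactly one neighbor of $u$ lies on the geodesic from $u$ to $v$ (the remaining $m-1$ step away from $v$), and symmetrically for neighbors of $v$. A direct computation gives
\begin{equation*}
(C\bm{A})_{vu} = c(d(v,u)-1) + (m-1)c(d(v,u)+1) = (\bm{A}C)_{vu}, \qquad v\ne u,
\end{equation*}
with the analogous (easier) identity at $v=u$. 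Hence $C\bm{A}=\bm{A}C$ on $\R^{\T_m}_{\mathrm{fin}}$, and by continuity on all of $\ell_2(\T_m)$.

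Combining the two relations, the symmetrized equation collapses to $(z\bm{I}-\bm{A})C=\bm{I}$, and since $z\notin\sigma(\bm{A})$ this yields $C=(z\bm{I}-\bm{A})^{-1}$ as bounded operators on $\ell_2(\T_m)$. Taking $\bm{x}=\bm{y}=\bm{e}_v$ then gives $\E[(X_0^v)^2]=C_{vv}=\langle\bm{e}_v,(z\bm{I}-\bm{A})^{-1}\bm{e}_v\rangle$, which is the claim. The point I expect to require the most care is the continuity bookkeeping that extends both the Lyapunov identity and the commutation relation from $\R^{\T_m}_{\mathrm{fin}}$ to $\ell_2(\T_m)$; this is precisely where the $\ell_2(\T_m)$-extendability hypothesis is essential, since without it $C$ need not even be a bounded operator on $\ell_2(\T_m)$ — indeed the excerpt alludes to a second Gaussian SHM solution in the regime $2\sqrt{m-1}<z<m$ for which extendability fails.
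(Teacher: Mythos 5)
Your argument is correct and is essentially the paper's proof in operator-theoretic packaging: the paper derives the same It\^o/stationarity identity $p(\bm x,(z\bm I-\bm A)\bm y)=\langle \bm x,\bm y\rangle$ for the covariance form $p(\bm x,\bm y)=\E[\langle\bm x,\bm X_0\rangle\langle\bm y,\bm X_0\rangle]$, uses the same homogeneity-plus-tree-distance count (one neighbor at distance $k-1$, $m-1$ at distance $k+1$) to show $\bm A$ is symmetric with respect to $p$ --- which is exactly your commutation $C\bm A=\bm A C$ --- and then concludes by plugging $\bm x=\bm e_v$, $\bm y=(z\bm I-\bm A)^{-1}\bm e_v$ into the extended identity instead of first writing the full operator equation $C=(z\bm I-\bm A)^{-1}$. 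The extendability/continuity bookkeeping you flag is handled in the same way in the paper, so no gap remains.
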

\begin{proof}
Since $\bm{X}_0$ is $\ell_2(\T_m)$-extendable, we may define the square-integrable random variable $\langle \bm x,\bm X_0\rangle$ for each $\bm{x} \in \ell_2(\T_m)$ as above.
Define the covariance operator $p(\cdot,\cdot)$ by
\begin{equation*}
p(\bm x,\bm y) = \E\big[\langle \bm x,\bm X_0\rangle\langle \bm y,\bm X_0\rangle\big].
\end{equation*}
We first claim that operator $\bm A$ is symmetric with respect to $p$, or $p(\bm A\bm x,\bm y) = p(\bm x,\bm A\bm y)$.
To do so, it suffices to show that for any distinct $v,u\in \T_m$, 
\begin{equation}\label{adjoint2}
p(\bm A\bm e_v,\bm e_u)=p(\bm e_v,\bm A\bm e_u).
\end{equation}
Noting that $\langle \bm{e}_u,\bm{X}_0\rangle = X^u_0$ and $\langle \bm A\bm e_v,\bm{X}_0\rangle = \sum_{w \in N(v)} X^w_0$, 
we find that the left-hand side of \eqref{adjoint2} equals
\begin{align*}
\sum_{w \in N(v)}g(d(u,w)),
\end{align*}
where $d$ is the graph distance and $g(k)$ is defined to be $\E[X^{r_1}_0 X^{r_2}_0]$ for any two vertices $r_1,r_2$ with distance $k$; this is well-defined by homogeneity. If $d(v,u)=k>0$, then exactly one neighbor of $v$ is at distance $k-1$ from $u$, and the remaining $m-1$ neighbors of $v$ are at distance $k+1$ from $u$. Hence, the left-hand side of \eqref{adjoint2} becomes $g(k-1)+(m-1)g(k+1)$. This is clearly symmetric in $(v,u)$, and \eqref{adjoint2} follows. 

For any  $\bm{x} \in \R^{\T_m}_{\mathrm{fin}}$, the SDE \eqref{SDE-linear} implies
\begin{align*}
\d \langle \bm{x},\bm{X}_t\rangle = -\langle (z\bm{I}-\bm{A})\bm{x},\bm{X}_t\rangle \,\d t + \sqrt{2} \, \d \langle \bm{x}, \bm{W}_t\rangle ,
\end{align*}
The covariation process of  $\langle \bm{x}, \bm{W}_t\rangle$ and $\langle \bm{y}, \bm{W}_t\rangle$ is $t \langle \bm{x},\bm{y}\rangle$, and thus It\^o's product rule yields
\begin{align*}
\d \langle \bm{x},\bm{X}_t\rangle \langle \bm{y},\bm{X}_t\rangle &= \Big(-\langle \bm{x},\bm{X}_t\rangle \langle (z\bm{I}-\bm{A})\bm{y},\bm{X}_t\rangle -\langle \bm{y},\bm{X}_t\rangle \langle (z\bm{I}-\bm{A})\bm{x},\bm{X}_t\rangle + 2 \langle \bm{x},\bm{y}\rangle \Big)\,\d t \\
	&\qquad + \sqrt{2} \langle \bm{x},\bm{X}_t\rangle \, \d \langle \bm{y}, \bm{W}_t\rangle + \sqrt{2} \langle \bm{y},\bm{X}_t\rangle \, \d \langle \bm{x}, \bm{W}_t\rangle.
\end{align*}
Taking expectations and using stationarity, we find
\begin{align*}
0 &= \E\Big[-\langle \bm{x},\bm{X}_t\rangle \langle (z\bm{I}-\bm{A})\bm{y},\bm{X}_t\rangle -\langle \bm{y},\bm{X}_t\rangle \langle (z\bm{I}-\bm{A})\bm{x},\bm{X}_t\rangle \Big]  + 2 \langle \bm{x},\bm{y}\rangle \\
	&= -2p(\bm{x},(z\bm{I}-\bm{A})\bm{y}) + 2 \langle \bm{x},\bm{y}\rangle ,
\end{align*}
where the last step used the symmetry of $\bm{A}$ with respect to $p(\cdot,\cdot)$.
Note that $p(\cdot,\cdot)$ is well-defined and continuous on $\ell_2(\T_m)^2$ thanks to the assumed $\ell_2(\T_m)$-extendability.
The previous identity thus extends from $\R^{\T_m}_{\mathrm{fin}}$ to $\ell_2(\T_m)$.
Choose $\bm x=\bm e_v$ and $\bm y=(z\bm I-\bm A)^{-1}\bm e_v$ to find
\begin{equation*}
{ \langle \bm e_v, (z\bm I-\bm A)^{-1}\bm e_v\rangle = p(\bm{e}_v,\bm{e}_v) = \E\big[\langle \bm e_v,\bm X_0\rangle ^2\big]=\E\big[(X_0^v)^2\big], }
\end{equation*}
for any $v \in \T_m$, which completes the proof.
\end{proof}

\begin{remark}
The identity $\langle \bm{x},\bm{y}\rangle = p(\bm{x},(z\bm{I}-\bm{A})\bm{y})$ shown in the proof of Proposition \ref{pr:infSDE-resolvent} is true for all finitely supported vectors $\bm{x},\bm{y} \in \R^{\T_m}$, even if the SHM solution $\bm{X}_t$ is not $\ell_2(\T_m)$-extendable. But the vector $\bm y=(z\bm I-\bm A)^{-1}\bm e_v$ used in the final step is not finitely supported and thus can not be safely plugged into this identity without $\ell_2(\T_m)$-extendability.
\end{remark}

\subsection{Some generalities on $\T_m$-indexed Gaussians}

We deal in this section with some generalities involving Gaussian measures on $\R^{\T_m}$.
A random variable $\bm{Y}=(Y_v)_{v \in \T_m}$ in $\R^{\T_m}$ is Gaussian if its finite-dimensional distributions are Gaussian. We say $\bm{Y}$ is a \emph{Markov random field} if $\bm{Y}_A$ and $\bm{Y}_B$ are conditionally independent given $\bm{Y}_S$, for any finite disjoint sets $A,B,S \subset \T_m$ with the property that every path from $A$ to $S$ passes through $S$. We say $\bm{Y}$ is homogeneous if $(Y_v)_{v \in \T_m} \stackrel{d}{=} (Y_{\varphi(v)})_{v \in \T_m}$ for every automorphism $\varphi$ of $\T_m$.
Centered Gaussians with both of these properties have a very simple structure:

\begin{lemma} \label{le:Gaussians}
Let $\bm{Y}$ be a centered Gaussian in $\R^{\T_m}$ which is a homogeneous Markov  random field. Then there exist $\sigma^2 \ge 0$ and $\rho \in [-1,1]$ such that
\begin{align*}
\E[Y_v^2]=\sigma^2, \qquad \E[Y_vY_u] = \sigma^2 \rho^{d(u,v)}, \qquad \forall u,v \in \T_m,
\end{align*}
where $d(u,v)$ denotes the graph distance; we call $\sigma^2$ the \emph{variance} and $\rho$ the \emph{correlation} of $\bm{Y}$.
Finally, if $\sigma^2 > 0$, then $\bm{Y}$ is $\ell_2(\T_m)$-extendable if and only if $|\rho| < 1/\sqrt{m-1}$.
\end{lemma}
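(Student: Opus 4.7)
My plan is to handle the three claims in turn.

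First, for the constant variance and distance-only covariance structure: homogeneity and the vertex- and distance-transitivity of $\T_m$ directly give $\E[Y_v^2] = \sigma^2$ independent of $v$, and $\E[Y_u Y_v] = c(d(u,v))$ for some function $c$ with $c(0) = \sigma^2$. If $\sigma^2 = 0$ the lemma is trivial (take $\rho = 0$), so assume $\sigma^2 > 0$. For $k \ge 2$ and $u,v$ with $d(u,v) = k$, let $w$ be the unique neighbor of $u$ on the geodesic from $u$ to $v$; the Markov property applied to $A = \{u\}$, $B = \{v\}$, $S = \{w\}$ makes $Y_u, Y_v$ conditionally independent given $Y_w$. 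Since $(Y_u, Y_v, Y_w)$ is centered and jointly Gaussian, $\E[Y_u \mid Y_w] = (c(1)/\sigma^2)\, Y_w$ and $\E[Y_v \mid Y_w] = (c(k-1)/\sigma^2)\, Y_w$. The tower property yields the recursion $c(k) = (c(1)/\sigma^2)\, c(k-1)$, and induction with $\rho := c(1)/\sigma^2 \in [-1,1]$ (by Cauchy-Schwarz) gives $c(k) = \sigma^2 \rho^k$.

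For the extendability statement, assume $\sigma^2 > 0$ and rescale so that $\sigma^2 = 1$; write $B$ for the covariance kernel $B_{uv} = \rho^{d(u,v)}$, so $\bm Y$ is $\ell_2(\T_m)$-extendable if and only if the quadratic form $\bm x^\top B \bm x = \E[\langle \bm x, \bm Y\rangle^2]$ is bounded by a multiple of $\|\bm x\|^2$ on finitely supported $\bm x$. Since $\T_m$ is bipartite with sign function $\epsilon : \T_m \to \{\pm 1\}$ satisfying $(-1)^{d(u,v)} = \epsilon_u \epsilon_v$, the $\ell_2$-isometry $\bm x \mapsto (\epsilon_v x_v)_v$ converts the $\rho < 0$ problem into the $|\rho|$ problem, so I may assume $\rho \ge 0$. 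The key computation, using $|\{u : d(u,v) = k\}| = m(m-1)^{k-1}$ for $k \ge 1$, is
\[
\|B \bm e_v\|^2 = \sum_u \rho^{2d(u,v)} = 1 + \sum_{k=1}^\infty m(m-1)^{k-1} \rho^{2k},
\]
which converges if and only if $(m-1)\rho^2 < 1$. In the regime $\rho \ge 1/\sqrt{m-1}$, non-extendability follows: if $Q(\bm x) := \bm x^\top B \bm x \le c\|\bm x\|^2$ for all finitely supported $\bm x$, then the Cauchy-Schwarz inequality for the PSD bilinear form $(\bm x, \bm y) \mapsto \bm x^\top B \bm y$ yields $|\bm e_v^\top B \bm y| \le c\|\bm y\|$, and choosing $\bm y$ to be the truncation of $B \bm e_v$ to the ball $\T_m^N$ gives $\|(B\bm e_v)|_{\T_m^N}\| \le c$, contradicting $\|B\bm e_v\| = \infty$ as $N \to \infty$.

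For the sufficient direction ($\rho = 0$ being trivial since then $B = \bm I$), assume $0 < \rho < 1/\sqrt{m-1}$ and consider the matrix identity
\[
(z\bm I - \bm A)\,B = \tfrac{1-\rho^2}{\rho}\,\bm I, \qquad z := \rho^{-1} + (m-1)\rho,
\]
which I would verify entry-by-entry, using that any vertex $u \ne v$ has exactly one neighbor at distance $d(u,v) - 1$ from $v$ and $m-1$ neighbors at distance $d(u,v)+1$. By AM-GM, $z > 2\sqrt{m-1}$ on $(0, 1/\sqrt{m-1})$, placing $z$ outside $\sigma(\bm A) = [-2\sqrt{m-1}, 2\sqrt{m-1}]$ as recorded in \eqref{sigma(A)}; hence $(z\bm I - \bm A)^{-1}$ is bounded on $\ell_2(\T_m)$. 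Since $B\bm e_v \in \ell_2(\T_m)$ in this regime, $B\bm x \in \ell_2(\T_m)$ for finitely supported $\bm x$, and the identity applied to $\bm x$ gives $B\bm x = \frac{1-\rho^2}{\rho}(z\bm I - \bm A)^{-1}\bm x$, yielding the uniform bound $Q(\bm x) \le \frac{1-\rho^2}{\rho} \|(z\bm I - \bm A)^{-1}\|\,\|\bm x\|^2$ and hence extendability. I expect the sharpness of the threshold $1/\sqrt{m-1}$ to be the main obstacle: a direct Schur's test only handles the smaller range $|\rho| < 1/(m-1)$, so the reduction of $B$ to a scalar multiple of the resolvent of $\bm A$ at a point just outside $\sigma(\bm A)$ is the essential input.
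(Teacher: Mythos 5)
Your proof of the covariance structure is the same as the paper's: homogeneity gives the distance-only covariance $c(d)$, and the Markov property plus the Gaussian conditional-expectation formula yields $c(k) = \rho\, c(k-1)$, which is exactly what the paper does.

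For the extendability dichotomy your route is genuinely different. The paper argues both directions by hand: for sufficiency it introduces a free parameter $\epsilon$ and bounds $\sum_{u,v} \rho^{d(u,v)} x_u x_v$ via a weighted AM--GM inequality together with a generation-by-generation counting of vertices, then shows one can pick $\epsilon \in (|\rho|, 1/|\rho(m-1)|)$ precisely when $\rho^2(m-1)<1$; for necessity it constructs the explicit almost-$\ell_2$ vector $x_v = a^{|v|}$ with $|a| \uparrow 1/\sqrt{m-1}$ and shows the quadratic form blows up. Your argument instead identifies the covariance kernel $B_{uv}=\rho^{d(u,v)}$ as (a scalar multiple of) the resolvent $(z\bm I-\bm A)^{-1}$ at $z = \rho^{-1}+(m-1)\rho$, which by AM--GM sits strictly outside $\sigma(\bm A)=[-2\sqrt{m-1},2\sqrt{m-1}]$ exactly when $|\rho|<1/\sqrt{m-1}$; necessity follows cleanly from divergence of $\|B\bm e_v\|$ plus a Cauchy--Schwarz trick for the semidefinite form. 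Both arguments are correct (yours has two cosmetic slips: Cauchy--Schwarz gives $|\bm e_v^\top B\bm y| \le \sqrt{c}\,\|\bm y\|$, hence $\|(B\bm e_v)|_{\T_m^k}\| \le \sqrt{c}$, but this changes nothing). The trade-off is that your proof is shorter and more conceptual but takes $\sigma(\bm A) = [-2\sqrt{m-1},2\sqrt{m-1}]$ as an external input (which the paper records in \eqref{sigma(A)} but does not reprove), whereas the paper's proof is entirely self-contained and does not rely on knowing the spectrum of the adjacency operator. There is no circularity concern: the section aims to recover the Kesten--McKay spectral \emph{measure}, not the spectral \emph{set}, and \eqref{sigma(A)} is already invoked elsewhere (e.g.\ in Proposition~\ref{pr:infSDE-resolvent}). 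You correctly flag that the resolvent identity is the essential ingredient making the sharp threshold visible; a crude Schur test would only reach $|\rho|<1/(m-1)$.
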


We defer the proof to Section \ref{se:Gaussian-proofs}, preferring first to complete the discussion of the infinite SDE, which we analyze via the corresponding local equation.

\subsection{Solution via the local equation} 

In the following, define
\begin{align*}
\rho_{\pm} &= \frac{z \mp \sqrt{z^2-4(m-1)}}{2(m-1)}, \qquad \sigma^2_{\pm} = \frac{m\sqrt{z^2-4(m-1)}\mp(m-2)z}{\pm 2(z^2-m^2)},
\end{align*}
which are real and well-defined when $z \ge 2\sqrt{m-1}$ and $z \neq m$. {The pairs $(\sigma^2_+,\rho_+)$ and $(\sigma^2_-,\rho_-)$ are the solutions of the pair of equations
\begin{equation}\label{equ:rho}
 (m-1)\rho^2-z\rho +1=0, \qquad \sigma^2(z-m\rho)=1. 
\end{equation}
Notably, these are exactly the two equations found in \cite[Theorem 2 and Example 1]{bordenave2010resolvent} to characterize the resolvent of the $m$-regular tree $\T_m$, using the notation $X(-z)=\sigma^2$ and $Y(-z)=\rho$.}

\begin{theorem} \label{th:linear}
There are five cases:
\begin{enumerate}[(i)]
\item Let $z > m$. Then there exists a SHM solution $\bm{X}$ of the infinite SDE \eqref{SDE-linear}, and it is unique in law among those solutions for which the function $x \mapsto \E[X^u_t\,|\,X^v_0=x]$ belongs to $L^q_{\mathrm{loc}}(\R)$ for some $q > 2$ and for some edge $(v,u)$. Moreover, $\bm{X}_0$ is a centered Gaussian which is a homogeneous Markov random field, and it has variance $\sigma^2_+$ and correlation $\rho_+$.
\item Let $z=m>2$. Then there exist a unique Gaussian SHM solution $\bm{X}$ of the infinite SDE \eqref{SDE-linear}. It holds that $\bm{X}_0$ is a centered Gaussian which is an homogeneous Markov random field with variance $\sigma^2 = \frac{m-1}{m(m-2)}$ and correlation $\rho=1/(m-1)$. Moreover, $\bm{X}_0$ is $\ell_2(\T_m)$-extendable.
\item Let $2\sqrt{m-1} < z < m$.  Then there exist exactly two SHM solutions $\bm{X}^{\pm}$ of the infinite SDE \eqref{SDE-linear}. It holds that $\bm{X}_0^{\pm}$ is a centered Gaussian which is a  homogeneous Markov random field with variance $\sigma^2_{\pm}$ and correlation $\rho_{\pm}$. Moreover, $\bm{X}^+_0$ is $\ell_2(\T_m)$-extendable, whereas $\bm{X}^-_0$ is not.
\item Let $z=2\sqrt{m-1}$, $m\neq 2$.  Then there exist a unique Gaussian SHM solution $\bm{X}$ of the infinite SDE \eqref{SDE-linear}. It holds that $\bm{X}_0$ is a centered Gaussian which is a homogeneous Markov random field with variance $\sigma^2 = 1/((m-1)^{1/2} - (m-1)^{-1/2})$ and correlation $\rho=1/\sqrt{m-1}$. Moreover, $\bm{X}_0$ is not $\ell_2(\T_m)$-extendable.
\item Let $z < 2\sqrt{m-1}$ or $z=m=2$. Then there is no Gaussian SHM solution of the infinite SDE \eqref{SDE-linear}.
\end{enumerate}
\end{theorem}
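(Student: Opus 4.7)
My plan is to reduce every claim to an analysis of Gaussian stationary symmetric solutions of the local equation \eqref{intro:localeq} with $U'(x)=(z-m)x$ and $K'(x)=x$. If $(X_0,Y_0)$ is centered, symmetric, and jointly Gaussian with common variance $\sigma^2$ and correlation $\rho$, then $\E[Y_0\mid X_0=x]=\rho x$, so the drift becomes linear and the local equation collapses to a two-dimensional Ornstein--Uhlenbeck system with drift matrix $A=\bigl(\begin{smallmatrix}z-(m-1)\rho & -1 \\ -1 & z-(m-1)\rho\end{smallmatrix}\bigr)$. Requiring that $\Sigma=\sigma^2\bigl(\begin{smallmatrix}1 & \rho \\ \rho & 1\end{smallmatrix}\bigr)$ be its invariant covariance, via the Lyapunov equation $A\Sigma+\Sigma A^\top=2I$, is precisely the pair \eqref{equ:rho}. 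A short discriminant and positivity analysis then identifies the admissible real solutions $(\sigma_\pm^2,\rho_\pm)$ (requiring $z\ge 2\sqrt{m-1}$ for reality, $z-m\rho>0$ for $\sigma^2>0$, and $|\rho|<1$ for non-degeneracy), and the bookkeeping splits cleanly into the five regimes (i)--(v).

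For existence in (i)--(iv), given an admissible pair $(\sigma^2,\rho)$, I set $F(x):=(1-\rho)x^2/2$ and verify that it solves the fixed point problem of Definition \ref{def:fixedpoint}: the integral in \eqref{fixedpoint} is Gaussian, and completing the square reduces the identity back to \eqref{equ:rho}. The integrability conditions \eqref{fixedpoint-integrability} and \eqref{asmp:tech1'} are immediate since $U'$ and $K'$ are linear and $\sigma^2>0$, so Theorem \ref{th:fixedpoint->infSDE} produces an SHM solution with the prescribed Gaussian edge marginal. The finite-dimensional densities $p_k$ in \eqref{Def:p_k} are quadratic exponentials and hence jointly Gaussian, so $\bm X_0$ is a centered Gaussian homogeneous Markov random field on $\T_m$; Lemma \ref{le:Gaussians} then delivers the claimed variance/correlation formulas and decides $\ell_2(\T_m)$-extendability via the criterion $|\rho|<1/\sqrt{m-1}$, combined with the algebraic identity $\rho_+\rho_-=1/(m-1)$ read off from \eqref{equ:rho}. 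Non-existence in (v) falls out of Step 1, since \eqref{equ:rho} admits no admissible real solution in that regime.

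Uniqueness is handled case by case. In (i), the local $L^q$ hypothesis on the conditional expectation lets me apply Theorems \ref{th:infSDE->local} and \ref{th:local->fixedpoint} to reduce to uniqueness of the fixed point problem, which follows from Theorem \ref{th:intro:wellposed-m=2} for $m=2$ (condition \eqref{asmp:tech2} is trivial here) and from Theorem \ref{th:intro:wellposed-m>2} for $m>2$ (hypothesis \eqref{asmp:m>2-thm} reduces to $z-m>0$, since $K''\equiv 1$). The uniqueness assertions in (ii) and (iv) are restricted to the Gaussian class and fall out of Step 1. The main obstacle is case (iii), which asserts \emph{exactly two} SHM solutions without any Gaussian restriction, and where Theorem \ref{th:intro:wellposed-m>2} fails because $U''=z-m<0$. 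My plan there is to argue directly on the fixed point problem: after Theorems \ref{th:infSDE->local} and \ref{th:local->fixedpoint} reduce to fixed points, I set $\varphi:=e^{-F}$ and study the Hammerstein-type integral equation $\varphi(x)=e^{-C}\int e^{-U(y)-K(x-y)}\varphi(y)^{m-1}\,dy$, whose kernel is Gaussian in $(x,y)$. The explicit Mehler/Hermite structure of this kernel should force every positive solution $\varphi$ to be a Gaussian, thereby making $F$ quadratic and reducing to the two quadratic solutions already constructed. I expect this last step, pushing uniqueness past the Gaussian ansatz in the concave-$U$ regime, to carry the bulk of the technical work.
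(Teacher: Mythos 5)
Your reduction to the two‑dimensional Ornstein--Uhlenbeck system, the Lyapunov equation leading to \eqref{equ:rho}, the computation $F(x)=(1-\rho)x^2/2$ with $f(x)=\E[K'(X_0-Y_0)\,|\,X_0=x]=(1-\rho)x$, the use of Theorems~\ref{th:infSDE->local}, \ref{th:local->fixedpoint}, \ref{th:fixedpoint->infSDE} to pass among formulations, and the use of Lemma~\ref{le:Gaussians} plus the algebraic relation $\rho_+\rho_-=1/(m-1)$ to settle $\ell_2(\T_m)$-extendability all match the paper's argument in substance. (The paper's proof has a small slip, writing $f(x)=\rho x$ and $F(x)=\rho x^2/2$; your $f(x)=(1-\rho)x$ and $F(x)=(1-\rho)x^2/2$ is the correct version, as one can confirm by completing the square in the fixed point equation, which then yields precisely $(m-1)\rho^2-z\rho+1=0$.) Your Vieta-style argument for $\rho_+<1/\sqrt{m-1}<\rho_-$ is a slightly cleaner substitute for the paper's monotonicity-in-$z$ argument; the two approaches are otherwise the same. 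For case (i) you invoke Theorem~\ref{th:intro:wellposed-m=2} for $m=2$ and Theorem~\ref{th:intro:wellposed-m>2} for $m>2$, while the paper invokes only the latter; your more careful split is a genuine improvement, since Theorem~\ref{th:intro:wellposed-m>2} is stated for $m>2$.

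The one place you depart from the paper is case (iii). You read the statement literally — ``exactly two SHM solutions,'' with no Gaussian qualifier — and correctly observe that the earlier general uniqueness machinery fails there (condition \eqref{asmp:m>2-thm} demands $z>m$). Your proposed remedy, showing via the Mehler/Hermite structure of the Gaussian kernel that every positive solution $\varphi=e^{-F}$ of the eigenfunction equation must itself be a Gaussian, is plausible but is sketched rather than carried out, and it is genuinely nontrivial: in the regime $2\sqrt{m-1}<z<m$ the Gaussian kernel $e^{-U(y)-K(x-y)}$ is not Hilbert--Schmidt (because $U$ is concave), so the standard spectral decomposition underlying a Mehler expansion is not directly available, and the nonlinearity $\varphi^{m-1}$ for $m>2$ removes the linear-operator structure entirely. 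You should be aware, however, that the paper's own proof of (iii) also restricts silently to the Gaussian class — it establishes a one-to-one correspondence between Gaussian SHM solutions and Gaussian solutions of the local equation and then enumerates solutions of \eqref{equ:rho} — so the paper does not prove the unqualified claim either; cases (ii), (iv), (v) explicitly carry the word ``Gaussian'' while (iii) does not, which appears to be an imprecision in the statement rather than content the proof actually delivers. In short: your proposal is correct and faithful to the paper for all the parts the paper actually proves, and the extra step you propose for (iii) is neither established nor needed to match the paper, but you were right to flag it as the one genuinely unresolved point.
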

\begin{proof}
With $U(x)=(z-m)x^2/2$ and $\K(x)=x^2/2$,  the local equation \eqref{intro:localeq} becomes
\begin{align}
\begin{split}
\d X_t &= (-zX_t+Y_t+(m-1)\E[Y_t|X_t]\big) \,\d t + \sqrt{2} \d W_t, 	\\
\d Y_t &= (X_t-zY_t+(m-1)\E[X_t|Y_t]\big) \,\d t + \sqrt{2} \d B_t.  
\end{split} \label{local-linear}
\end{align}
When $z > m$, all of the assumptions of Theorem \ref{th:intro:wellposed-m>2} hold, and this implies the existence and uniqueness claimed in (i). Otherwise, there is still a one-to-one correspondence between Gaussian solutions of the infinite SDE \eqref{SDE-linear} and Gaussian solutions of the local equation \eqref{local-linear}, as follows.
A Gaussian SHM solution $\bm{X}$ gives rise to a solution of the local equation, in the sense of Theorem \ref{th:infSDE->local}.
Conversely, a Gaussian solution $(X,Y)$ of the local equation  gives rise to a solution of the fixed point problem, as follows. Note that $f(x) = \E[\K(X_0-Y_0)\,|\,X_0=x]=\rho x$, where $\rho$ is the correlation of $X_0$ and $Y_0$. Clearly, $U'$, $\K'$, and $f$ belong to $L^q_{\mathrm{loc}}(\R)$ for any $q > 2$, so Theorem \ref{th:local->fixedpoint} applies to yield that $F(x)=\int_0^x f(y)\,\d y = \rho x^2/2$ solves the fixed point problem, and the density of $(X_0,Y_0)$ takes the form $\rho(x,y)$ specified in \eqref{jointdensity-thm}.  The integral in \eqref{asmp:tech1'} rewrites as
\begin{align*}
\int_{\R^2} (|U'(x)|^p + |\K'(x-y)|^p)\rho(x,y)\,\d x \d y = \E\big[ |(z-m)X_0|^p + |X_0-Y_0|^p \big],
\end{align*}
which is finite because $(X_0,Y_0)$ is Gaussian. We may then apply Theorem \ref{th:fixedpoint->infSDE} to find a SHM solution $\bm{X}=(X^v)_{v \in \T_m}$ of the infinite SDE \eqref{SDE-linear} such that $(X^v_0,X^u_0) \stackrel{d}{=} (X_0,Y_0)$ for each edge $(v,u)$.

Now, suppose $(X,Y)$ is a stationary symmetric  solution of the local equation such that $(X_0,Y_0)$ is Gaussian.
Define the variance $\sigma^2:=\Var(X_0)$ and correlation $\rho:=\Cov(X_0,Y_0)/\Var(X_0)$. Then $\E[X_t\,|\,Y_t]=\rho Y_t$ and $\E[Y_t\,|\,X_t]=\rho X_t$, and so $(X,Y)$ satisfies
\begin{equation*}
	\d\left(\begin{matrix} X_t\\ Y_t\end{matrix}\right)=-\left(\begin{matrix}z-(m-1)\rho & -1 \\ -1 & z-(m-1)\rho\end{matrix}\right)\left(\begin{matrix} X_t\\ Y_t\end{matrix}\right)\d t+\sqrt{2}\,\d \left(\begin{matrix} W_t\\ B_t\end{matrix}\right).
\end{equation*}
This is a multivariate Ornstein-Uhlenbeck process, and its unique invariant measure is the centered Gaussian with covariance matrix given by the inverse of
\begin{align}
\left(\begin{matrix}z-(m-1)\rho & -1 \\ -1 & z-(m-1)\rho\end{matrix}\right), \label{OUcov}
\end{align}
assuming this matrix is positive definite. But this covariance matrix must match that of $(X_0,Y_0)$, which means we must have
\begin{equation}
\left(\begin{matrix}z-(m-1)\rho & -1 \\ -1 & z-(m-1)\rho\end{matrix}\right) 	\sigma^2\left(\begin{matrix}1 & \rho \\ \rho & 1\end{matrix}\right)=\left(\begin{matrix}1 & 0 \\ 0 & 1\end{matrix}\right). \label{rho-matrix}
\end{equation}
Note if $-1 < \rho < 1$ that the matrix
\begin{align*}
\sigma^2\left(\begin{matrix}1 & \rho \\ \rho & 1\end{matrix}\right)
\end{align*}
is automatically positive definite, and so the positive definiteness of \eqref{OUcov} follows from the identity \eqref{rho-matrix}.
The equation \eqref{rho-matrix} is equivalent to the two equations \eqref{equ:rho}. The solutions of \eqref{equ:rho} are given by $(\sigma^2_{\pm},\rho_{\pm})$, but they are only meaningful if they belong to $S := (0,\infty) \times (-1,1)$. Let us discuss each case separately:
\begin{enumerate}[(i)]
\item If $z > m$, then $(\sigma_+^2,\rho_+)$ is the unique solution of \eqref{equ:rho} belonging to $S$. The other solution $\rho_-$ of the first equation  of \eqref{equ:rho} is larger than $1$.
\item If $z=m$, then the two solutions of the first equation of \eqref{equ:rho} are $\rho_+=1/(m-1)$ and $\rho_-=1$. For $\rho=1$ and $z=m$, there is no solution of the second equation of \eqref{equ:rho}. For $\rho=1/(m-1)$, the second equation of \eqref{equ:rho} becomes
\begin{align*}
1=\sigma^2  m(1-\rho) = \sigma^2 \frac{m(m-2)}{m-1}.
\end{align*}
For $m=2$ there is again no solution.
\item If $2\sqrt{m-1} < z < m$, then both $(\sigma^2_+,\rho_+)$ and $(\sigma^2_-,\rho_-)$ are solutions belonging to $S$, and we always have $\rho_+ < 1/\sqrt{m-1} < \rho_-$. Indeed, $\rho_+$ is easily seen to be decreasing in $z$, whereas $\rho_-$ is increasing in $z$, and they both agree and equal $1/\sqrt{m-1}$ when $z=2\sqrt{m-1}$. In light of Lemma \ref{le:Gaussians}, this proves the claimed $\ell_2(\T_m)$-extendability.
\item If $z=2\sqrt{m-1}$ and $m > 2$, then $\rho_+=\rho_-=1/\sqrt{m-1}$ and $\sigma_+^2=\sigma_-^2=1/((m-1)^{1/2} - (m-1)^{-1/2})$ comprise the unique solution in $S$.
\item If $z < 2\sqrt{m-1}$, then there are no real solutions of \eqref{equ:rho}.
\end{enumerate}
\end{proof}

\subsection{The resolvent, revisited}

Equipped with Theorem \ref{th:linear}, we can now compute the resolvent. For $z > m$, we know from Theorem \ref{th:linear}(i) that there is a Gaussian SHM solution $\bm{X}$ of the infinite SDE \eqref{SDE-linear}, and it satisfies
\[
\E[(X^v_0)^2]=\sigma_+^2=\frac{m\sqrt{z^2-4(m-1)}-(m-2)z}{ 2(z^2-m^2)}.
\]
Moreover, $\bm{X}_0$ is $\ell_2(\T_m)$-extendable, and thus Proposition \ref{pr:infSDE-resolvent} implies that $\E[(X^v_0)^2]=\langle \bm{e}_v, (z \bm I-\bm A)^{-1} \bm{e}_v\rangle$.
Equating these two identities recovers the resolvent formula \eqref{KestenMcKay-resolvent}.

\subsection{Proof of Lemma \ref{le:Gaussians}} \label{se:Gaussian-proofs}
The fact that $\E[Y_v^2]=:\sigma^2$ does not depend on $v$ follows from homogeneity, as does the fact that the correlation $\rho=\E[Y_vY_u]/\sigma^2$ is the same for every edge $(v,u)$. Suppose, for induction, that $\E[Y_vY_u] = \sigma^2 \rho^{d(u,v)}$ holds for all $u,v$ at distance $k$ or less, for some integer $k \ge 0$. Let $v,u$ have distance $k+1$, and let $w$ be the unique vertex at distance $k$ from $u$ and $1$ from $v$. Then $\E[Y_wY_u] = \sigma^2 \rho^k$, and $Y_v$ and $Y_w$ are jointly Gaussian with equal variance and with correlation $\rho$. In particular, we have $\E[Y_v\,|\,Y_w]=\rho Y_w$. By the Markov property,
\begin{align*}
\E[Y_vY_u] &= \E\big[\E[Y_vY_u\,|\,Y_w]\big] = \E\big[\E[Y_v\,|\,Y_w]Y_u\big] = \rho\E[Y_wY_u]  = \sigma^2\rho ^{k+1}.
\end{align*}

It remains to prove the claim about $\ell_2(\T_m)$-extendability.
In the following, fix an arbitrary root vertex $\o \in \T_m$. Let $|v|:=d(v,\o)$ denote the \emph{generation} of $v$. Let $\T_m^k=\{v \in \T_m : |v| \le k\}$ and $\partial \T_m^k = \{v \in \T_m : |v|=k\}$.  An \emph{ancestor} of a vertex $v$ is any vertex lying on the unique shortest path connecting $v$ to $\o$. For two vertices $v$ and $u$, there is a unique ancestor $v \wedge u$ of maximal generation. Note that
\begin{align}
d(u,v) = |u| + |v| - 2|u \wedge v|, \qquad \forall u,v \in \T_m. \label{treemetric}
\end{align}

First assume $|\rho| < 1/\sqrt{m-1}$. For $\bm{x} \in \R^{\T_m}_{\mathrm{fin}}$, 
\begin{align*}
\E[\langle \bm{x},\bm{Y}\rangle^2] &= \sum_{u,v \in \T_m} x_v x_u \E[Y_v Y_u] = \sum_{u,v \in \T_m} x_v x_u \sigma^2 \rho^{d(u,v)}.
\end{align*}
This is bounded from above for any $0 < \epsilon \le 1$ by
\begin{align*}
\frac{\sigma^2}{2}\sum_{u,v \in \T_m} \rho^{d(u,v)}\Big( x_u^2  \epsilon^{|v|-|u|} + x_v^2  \epsilon^{|u|-|v|}\Big) = \sigma^2 \sum_{u,v \in \T_m} \rho^{d(u,v)} x_u^2  \epsilon^{|v|-|u|}.
\end{align*}
For $u \in \T_m$ with $|u|=i$, using \eqref{treemetric} we can write
\begin{align*}
\sum_{v \in \T_m} \rho^{d(u,v)} \epsilon^{|v|-|u|} &= \sum_{j=0}^i \sum_{\ell=j}^\infty \sum_{v : |v \wedge u|=j, \, |v|=\ell} \rho^{\ell+i-2j} \epsilon^{\ell-i} \\
	&= \sum_{j=0}^i \sum_{\ell=j}^\infty \rho^{\ell+i-2j} \epsilon^{\ell-i} (m-1)^{\ell-j},
\end{align*}
with the last identity following from the simple observation that there are exactly $(m-1)^{\ell-j}$ choices of $v \in \T_m$ satisfying $|v \wedge u|=j$ and $|v|=\ell$. Thus 
\begin{align*}
\sum_{v \in \T_m} \rho^{d(u,v)} \epsilon^{|v|-|u|} &=\sum_{j=0}^i \frac{(\rho/\epsilon)^{i-j}}{1-\epsilon \rho(m-1)}.
\end{align*}
As long as $|\rho/\epsilon|<1$ and $|\epsilon\rho(m-1)|<1$, this can be computed as
\begin{align*}
= \sum_{j=0}^i \frac{(\rho/\epsilon)^{i-j}}{1-\epsilon \rho(m-1)} \le (1-\epsilon \rho (m-1))^{-1}(1-(\rho/\epsilon))^{-1} < \infty
\end{align*}
In other words, this is valid if we can choose $|\rho| < \epsilon < 1/|\rho(m-1)|$, which is the case if $\rho^2(m-1)<1$. In this case, putting it all together, we obtain
\begin{align*}
\E[\langle \bm{x},\bm{Y}\rangle^2] \le \sigma^2 \|\bm{x}\|^2(1-\epsilon \rho (m-1))^{-1}(1-(\rho/\epsilon))^{-1}, \qquad \forall \bm{x} \in \R^{\T_m}_{\mathrm{fin}},
\end{align*}
which proves $\ell_2(\T_m)$-extendability. 

To prove the converse, assume now that $|\rho| \ge 1/\sqrt{m-1}$. Define $\bm{x} \in \R^{\T_m}$ by $x_v = a^{|v|}$, for some constant $a$ with $|a| < 1/\sqrt{m-1}$ to be chosen later. Noting that $|\partial \T_m^k|=m(m-1)^{k-1}$ for $k \ge 1$, we find
\begin{align}
\|\bm{x}\|^2 = \sum_{v \in \T_m} a^{2|v|} = 1 + \sum_{k=1}^\infty  m(m-1)^{k-1} a^{2k}  = 1 + \frac{m}{m-1}\frac{1}{1-b^2} < \infty, \label{xnorm1}
\end{align}
where we set $b := |a|\sqrt{m-1}$.
Moreover, recalling \eqref{treemetric}, we have
\begin{align*}
\E[\langle \bm{x}|_{\T_m^k},\bm{Y} \rangle^2] &=  \sigma^2 \sum_{u,v \in \T_m^k} a^{|v|+|u|} \rho^{d(u,v)} \\
	&= \sigma^2 \sum_{u \in \T_m^k} \sum_{j=0}^{|u|} \sum_{\ell=j}^k \sum_{v : |v \wedge u|=j, \, |v|=\ell} a^{|v|+|u|} \rho^{|u|+|v|-2|v \wedge u|} \\
	&= \sigma^2 \sum_{u \in \T_m^k} \sum_{j=0}^{|u|} \sum_{\ell=j}^k a^{\ell+|u|} \rho^{|u|+\ell -2j} (m-1)^{\ell-j},
\end{align*}
where we use, as before, the fact that there are $(m-1)^{\ell-j}$ choices of $v$ satisfying $|v \wedge u|=j$ and $|v|=\ell$. Since also $|\partial \T_m^i|=m(m-1)^{i-1}$ for $i \ge 1$, this becomes
\begin{align*}
 &= \sigma^2 \sum_{\ell=0}^k a^{\ell} \rho^{\ell} (m-1)^{\ell} + \frac{m\sigma^2}{m-1} \sum_{i=1}^k  \sum_{j=0}^{i} \sum_{\ell=j}^k  a^{\ell+i} \rho^{\ell+i-2j}(m-1)^{\ell+i-j}.
\end{align*}
Let us choose $a$ to have the same sign as $\rho$.
{Then the first term is non-negative, and using $|\rho| \ge 1/\sqrt{m-1}$ the second term is bounded from below by
\begin{align*}
\frac{m\sigma^2}{m-1} \sum_{i=1}^k  \sum_{j=0}^{i} \sum_{\ell=j}^k  |a|^{\ell+i}  (m-1)^{(\ell+i)/2} &\ge \frac{m\sigma^2}{m-1} \sum_{i=1}^k  \sum_{\ell=0}^k  |a|^{\ell+i}  (m-1)^{(\ell+i)/2}  \\
	&\ge \frac{m\sigma^2}{m-1} \bigg( \sum_{i=1}^k  |a|^i(m-1)^{i/2}\bigg)^2 \\
	&= \frac{m\sigma^2}{m-1} \bigg(\frac{b- b^{k+1}}{1-b}\bigg)^2 ,
\end{align*}
}
where we recall that $b:=|a|\sqrt{m-1} < 1$. Thus, recalling \eqref{xnorm1}, we have shown that
\begin{align*}
\liminf_{k \to \infty}\frac{\E[\langle \bm{x}|_{\T_m^k},\bm{Y} \rangle^2]}{\|\bm{x}|_{\T_m^k}\|^2}  \ge \frac{\frac{m\sigma^2}{m-1} \frac{ {b} }{(1-b)^2}}{1 + \frac{m}{m-1}\frac{1}{1-b^2}} = {b} \sigma^2 \bigg( \frac{m-1}{m}(1-b)^2 + \frac{1-b}{1+b} \bigg)^{-1}.
\end{align*}
Sending $|a| \uparrow 1/\sqrt{m-1}$, with $a$ having the same sign as $\rho$, yields  $b \uparrow 1$, and the right-hand side diverges. Hence, $\bm{Y}$ is not $\ell_2(\T_m)$-extendable. \hfill\qedsymbol

\section{Particle systems with repulsion} \label{se:repulsion}

In this section we study the case $\K(x) = -\beta\log |x|$, with  resulting infinite SDE system given by \eqref{DysonSDE}.
The corresponding local equation is
\begin{align}
\begin{split}
\d X_t &= \Big(-U'(X_t) + \frac{\beta}{X_t-Y_t} + \beta(m-1)\E\Big[\frac{1}{X_t-Y_t}\,\big|\,X_t\Big]\Big)\d t + \sqrt{2} \, \d W_t, \\
\d Y_t &= \Big(-U'(Y_t) + \frac{\beta}{Y_t-X_t} + \beta(m-1)\E\Big[\frac{1}{Y_t-X_t}\,\big|\,Y_t\Big]\Big)\d t + \sqrt{2} \,\d B_t.
\end{split} \label{Dyson-localeq}
\end{align}
In the case $\beta=2$, we show in this section that the associated fixed point problem has a unique solution, which we explicitly identify. Using Theorems \ref{th:fixedpoint->infSDE} and \ref{th:infSDE->local}, this yields also the existence of a SHM solution of the infinite SDE \eqref{DysonSDE}, with an explicit form for the joint law of $(X^v_0,X^u_0)$ for any edge $(u,v)$,  and a stationary symmetric solution of the local equation \eqref{Dyson-localeq}.
We are unable to claim uniqueness for \eqref{DysonSDE} or \eqref{Dyson-localeq}, because $\K$ violates the local integrability assumption of Theorem \ref{th:local->fixedpoint}.

\begin{theorem} \label{th:Dyson}
Suppose $m \ge 2$. Let $\K(x)=-2 \log |x|$, and suppose $U$ is an even function such that $s_k := \int_\R x^k e^{-U(x)}\,\d x < \infty$ for $k \ge 0$.
Then there is a unique solution $F$ of the fixed point problem up to additive shifts, and it is given by
\begin{align}
F(x) = -\log(x^2+r), \label{Dyson-Fsolution}
\end{align}
where $r$ is the unique positive real root of the polynomial
\begin{align}
\sum_{j=0}^{m}\frac{m - 2j}{m}\binom{m}{j}s_{2j}r^{m-j}. \label{Dyson-polynomial}
\end{align}
Moreover, if we assume that $\int_{\R}|U'(x)|^q e^{-U(x)}\d x < \infty$ for some $q > 1$, then there exists a SHM solution of the infinite SDE \eqref{DysonSDE} (with $\beta=2$), in which the joint density of two adjacent particles $(X^v_0,X^u_0)$ is given by
\begin{align}
\rho(x,y)&= \frac{1}{Z}(x-y)^2(x^2+r)^{m-1}(y^2+r)^{m-1} e^{-U(x)-U(y)}, \quad x,y \in \R, \ \ Z >0. \label{Dyson-density}
\end{align}
There exists also a stationary symmetric solution $(X,Y)$ of the local equation \eqref{Dyson-localeq}, such that $(X_0,Y_0)$ has the same joint density $\rho$.
\end{theorem}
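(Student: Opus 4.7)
The plan has three main components: verify the explicit ansatz solves the fixed point problem, argue uniqueness at the levels of $r$ and of $F$, and then invoke Theorems \ref{th:fixedpoint->infSDE} and \ref{th:infSDE->local} to obtain the infinite SDE and local equation solutions.

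For the first step, I would substitute $F(x)=-\log(x^2+r)$ directly into \eqref{fixedpoint}. Since $e^{-K(x-y)}=(x-y)^2$, the right-hand side of \eqref{fixedpoint} becomes $\int_\R (x-y)^2(y^2+r)^{m-1}e^{-U(y)}\,\d y$. Expanding $(x-y)^2=x^2-2xy+y^2$ and using the evenness of $U$ to kill the $xy$ cross term, this integral equals $A x^2 + B$, where $A := \int (y^2+r)^{m-1}e^{-U(y)}\,\d y$ and $B := \int y^2(y^2+r)^{m-1}e^{-U(y)}\,\d y$. Matching against $e^C(x^2+r)$ forces $e^C=A$ and $B=rA$, i.e., $\int(y^2-r)(y^2+r)^{m-1}e^{-U(y)}\,\d y=0$. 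Writing $(y^2-r)(y^2+r)^{m-1}=(y^2+r)^m-2r(y^2+r)^{m-1}$ and expanding binomially yields exactly \eqref{Dyson-polynomial}. Uniqueness of its positive root is Descartes' rule of signs: the coefficient $\binom{m}{j}(m-2j)s_{2j}/m$ is strictly positive for $j<m/2$ and strictly negative for $j>m/2$, producing exactly one sign change in the sequence $j=0,\ldots,m$. The integrability \eqref{fixedpoint-integrability} reduces to a finite linear combination of $s_0,\ldots,s_{2m}$ via the binomial expansion of $(x^2+r)^m$.

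For uniqueness of $F$ up to additive shifts, the structural observation is that the $(x-y)^2$ kernel forces $e^{C-F(x)}$, for every solution, to be a polynomial in $x$ of degree at most two with strictly positive leading coefficient; the required finiteness of the implied moments is automatic once the fixed point equation makes sense. Absorbing the constant into the additive shift, one may normalize to $F(x)=-\log(x^2+bx+c)$ with $b^2<4c$. Matching the three coefficients of the resulting polynomial identity in $x$ yields, after the substitution $z=y+b/2$ and $r=c-b^2/4$, the two scalar conditions
\[
\int z(z^2+r)^{m-1}e^{-U(z-b/2)}\,\d z=0,\qquad \int (z^2-r)(z^2+r)^{m-1}e^{-U(z-b/2)}\,\d z=0.
\]
The main obstacle is to show that the first equation forces $b=0$. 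Pairing $z$ with $-z$ and using evenness of $U$ rewrites it as $\int_0^\infty z(z^2+r)^{m-1}\bigl[e^{-U(z-b/2)}-e^{-U(z+b/2)}\bigr]\,\d z=0$. Since $|z-b/2|<|z+b/2|$ for $z,b>0$, the bracketed factor has a definite sign under the natural monotonicity of $e^{-U}$ in $|y|$, forcing $b=0$. Once $b=0$, the second condition is precisely \eqref{Dyson-polynomial}, which fixes $c=r$.

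Finally, with the explicit $F$ in hand, the SHM solution follows from Theorem \ref{th:fixedpoint->infSDE}, provided \eqref{asmp:tech1'} is verified. Using the explicit form \eqref{Dyson-density} of $\rho$, H\"older's inequality together with the added hypothesis $\int|U'|^q e^{-U}<\infty$ bounds $\int|U'(x)|^p\rho(x,y)\,\d x\,\d y$ by $\int|U'|^q e^{-U}$ times polynomial moments $s_k$. The singular term $|K'(x-y)|^p=2^p|x-y|^{-p}$ is compensated by the $(x-y)^2$ prefactor in $\rho$, leaving a locally integrable $|x-y|^{2-p}$ whenever $p<3$, so taking $p>1$ small makes both bounds simultaneously finite. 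Applying Theorem \ref{th:infSDE->local} to the resulting SHM solution then produces the stationary symmetric solution of \eqref{Dyson-localeq} with joint density $\rho$ on any edge.
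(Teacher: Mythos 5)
Your overall route coincides with the paper's: the $(x-y)^2$ kernel forces $e^{-F}$ to be a positive quadratic, matching coefficients reduces the problem to two (after eliminating one) scalar moment identities, Descartes' rule of signs gives the unique positive root of \eqref{Dyson-polynomial}, and the last part is an application of Theorems \ref{th:fixedpoint->infSDE} and \ref{th:infSDE->local} after checking \eqref{asmp:tech1'} exactly as the paper does. The existence/verification half and the integrability checks are fine.

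However, there is a genuine gap at the one step that carries the real content of the uniqueness proof: showing that the linear term vanishes ($b=0$). You argue that $\int_0^\infty z(z^2+r)^{m-1}\bigl[e^{-U(z-b/2)}-e^{-U(z+b/2)}\bigr]\,\d z=0$ forces $b=0$ because of ``the natural monotonicity of $e^{-U}$ in $|y|$.'' That monotonicity is not among the hypotheses: the theorem assumes only that $U$ is even with all moments $s_k$ finite, and for, say, a double-well potential such as $U(x)=(x^2-1)^2$ the bracketed factor does not have a definite sign, so your argument breaks down (and even under monotonicity one would still need strictness somewhere to conclude $b=0$ rather than merely that the bracket vanishes a.e.). The paper closes this step using only evenness: multiplying out $\int(ay-b)(ay^2-2by+c)^{m-1}e^{-U(y)}\,\d y=0$ by the multinomial theorem, the vanishing of odd moments kills all even powers of $b$, and the coefficients of the odd powers are strictly positive (since $a,c>0$ and $s_{2\ell}>0$), so the resulting odd polynomial in $b$ vanishes only at $b=0$. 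Within your own parametrization the same conclusion follows without any monotonicity of $U$ by noting that $t\mapsto\int_\R (y+t)\bigl((y+t)^2+r\bigr)^{m-1}e^{-U(y)}\,\d y$ has strictly positive derivative (the integrand of the derivative is positive since $r>0$) and vanishes at $t=0$ by evenness, hence has no other zero; replacing your sign argument by either of these repairs the proof.
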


\begin{remark}
In the case $m=2$, the polynomial \eqref{Dyson-polynomial} becomes the quadratic $s_0r^2 - s_4$, which has the unique positive root $r=\sqrt{s_4/s_0}$. For general $m \ge 2$, the coefficient $c$ depends on the moments of $e^{-U(y)}\d y$ up to order $2m$.
\end{remark}

\begin{proof}[Proof of Theorem \ref{th:Dyson}]
We begin by arguing uniqueness, showing that any solution of the fixed point problem must take the claimed form. Let $F$ be a solution of the fixed point problem with corresponding constant $C$, as in Definition \ref{def:fixedpoint}. Let $\lambda = e^C$, so that the fixed point equation \eqref{fixedpoint} becomes
\begin{align}
F(x) &= - \log \left(\frac{1}{\lambda} \int_\R (x-y)^2 e^{- U(y) - (m-1)F(y)} \d y\right). \label{pf:repulsion-F} 
\end{align}
That is, we have $F(x) = -\log(ax^2-2bx+c)$ for some $a,b,c \in \R$ for which $ax^2-2bx+c > 0$ for all $x \in \R$, or equivalently $a > 0$ and $ac > b^2$. Then, \eqref{pf:repulsion-F} is equivalent to
\begin{align*}
ax^2-2bx+c = e^{-F(x)} &= \frac{1}{\lambda}\int_\R (x-y)^2 e^{- U(y) - (m-1)F(y)} \d y \\
	&= \frac{1}{\lambda}\Big(x^2\int_\R e^{- U(y) - (m-1)F(y)} \d y - 2x \int_\R y \, e^{- U(y) - (m-1)F(y)} \d y \\
	&\qquad\quad + \int_\R y^2 e^{- U(y) - (m-1)F(y)} \d y\Big),
\end{align*}
for all $x \in \R$.
Match coefficients and plug in $e^{-F(y)} = ay^2-2by+c$ to find
\begin{align}
\lambda a &= \int_\R e^{- U(y) - (m-1)F(y)} \d y = \int_\R (ay^2-2by+c)^{m-1} e^{- U(y)} \d y \nonumber \\
\lambda b &= \int_\R y e^{- U(y) - (m-1)F(y)} \d y = \int_\R y (ay^2-2by+c)^{m-1} e^{- U(y)} \d y \label{pf:repulsion2} \\
\lambda c &= \int_\R y^2 e^{- U(y) - (m-1)F(y)} \d y = \int_\R y^2 (ay^2-2by+c)^{m-1} e^{- U(y)} \d y \nonumber .
\end{align}

We first argue that $b=0$ necessarily, which is natural; we expect $F$ to be even because $U$ is. Multiply the first equation in \eqref{pf:repulsion2} by $b$ and subtract it from $a$ times the second equation to get
\begin{align*}
0 &= \int_\R (ay-b) (ay^2-2by+c)^{m-1} e^{- U(y)} \d y. 
\end{align*}
Then, use the multinomial theorem and recall that $s_k := \int_\R x^k e^{-U(y)}\,\d y$ to get
\begin{align*}
0 &=  \int_\R (ay-b)\sum_{i+j+k=m-1} \frac{(m-1)!}{i!j!k!} a^i y^{2i}(-2by)^j c^k e^{- U(y)} \d y \\
	&= \sum_{i+j+k=m-1} \frac{(m-1)!}{i!j!k!} \int_\R \left(  y^{2i+j+1}a^{i+1}(-2b)^j c^k + \frac12  y^{2i+j}a^i(-2b)^{j+1} c^k \right) e^{- U(y)} \d y \\
	&= \sum_{i+j+k=m-1} \frac{(m-1)!}{i!j!k!} \left(s_{2i+j+1}a^{i+1}(-2b)^j c^k + \frac12  s_{2i+j}a^i(-2b)^{j+1} c^k \right),
\end{align*}
where the sum is over nonnegative integers $(i,j,k)$ which sum to $m-1$.
We may express this equation as $0=\sum_{j=0}^m r_j (-2b)^j$, for some coefficients $r_0,\ldots,r_m \in \R$ which depend on $a$, $c$, and $(s_0,\ldots,s_{2m-1})$.
Evenness of $U$ implies that $s_\ell=0$ for every odd number $\ell > 0$. It follows that $r_j=0$ for $j\ge 0$ even, and also $r_j > 0$ for $j$ odd, since $a,c > 0$. This proves that $b=0$ necessarily.

With $b=0$, we have $F(x)= -\log(ax^2+c)$, where $(\lambda,a,c)$ solve the first and third equations of \eqref{pf:repulsion2}, which simplify to
\begin{align}
\lambda a &= \int_\R (ay^2+c)^{m-1} e^{- U(y)} \d y \label{pf:repulsion-lambda1} \\
\lambda c &= \int_\R y^2 (ay^2+c)^{m-1} e^{- U(y)} \d y. \label{pf:repulsion-lambda2}
\end{align}
Multiply the first equation by $c$ and subtract it from $a$ times the second equation to get 
\begin{align*}
\int_\R (ay^2-c)(ay^2 + c)^{m-1} e^{-U(y)}\d y = 0,
\end{align*}
Letting $r=c/a$, dividing by $a^m$ yields
\begin{align}
\int_\R (y^2-r)(y^2 + r)^{m-1} e^{-U(y)}\d y = 0, \label{eq:momentpoly1}
\end{align}
The binomial theorem gives
\begin{align*}
(y^2-r)(y^2+r)^{m-1} &= (y^2-r)\sum_{j=0}^{m-1} \binom{m-1}{j}y^{2j} r^{m-j-1} \\
	&= \sum_{j=0}^{m-1} \binom{m-1}{j}y^{2(j+1)}r^{m-j-1} - \sum_{j=0}^{m-1} \binom{m-1}{j}y^{2j} r^{m-j} \\
	&= \sum_{j=1}^{m} \binom{m-1}{j-1}y^{2j}r^{m-j} - \sum_{j=0}^{m-1} \binom{m-1}{j}y^{2j}r^{m-j} \\
	&= y^{2m} - r^{m} - \sum_{j=1}^{m-1} \left(\binom{m-1}{j} - \binom{m-1}{j-1}\right)y^{2j}r^{m-j} \\
	&= y^{2m} - r^{m} - \sum_{j=1}^{m-1} \frac{m - 2j}{m}\binom{m}{j}y^{2j}r^{m-j} \\
	&= - \sum_{j=0}^{m} \frac{m - 2j}{m}\binom{m}{j}y^{2j}r^{m-j}.
\end{align*}
Plug this into \eqref{eq:momentpoly1}, recalling the definition of $s_j$, to find that $r$ must be a root of the polynomial \eqref{Dyson-polynomial}. The sequence of coefficients of $r^m,r^{m-1},\ldots,r^1,r^0$ in this polynomial switches signs exactly once, when $j$ crosses $m/2$ in the summation. Hence, by Descartes' rule of signs, there is a unique positive real root of \eqref{Dyson-polynomial}. Recalling that $r=c/a$, we get $F(x)=-\log(ax^2 +ar)$, which is an additive shift of \eqref{Dyson-Fsolution}.

This proves uniqueness of the fixed point problem, and reversing the argument shows that $F(x)$ given by \eqref{Dyson-Fsolution} is indeed a solution of the fixed point problem. To see this, note that the above calculations show that the unique positive real root $r$ of \eqref{Dyson-polynomial} yields a solution of the equation \eqref{eq:momentpoly1}. Defining $\lambda := \int_\R (y^2+r)^{m-1} e^{-U(y)}\d y$, it follows from \eqref{eq:momentpoly1} that \eqref{pf:repulsion-lambda2} holds as well, with $a=1$ and $c=r$. With $b=0$, we find that \eqref{pf:repulsion2} holds, from which we can easily deduce that $F(x) = -\log(x^2+r)$ satisfies \eqref{pf:repulsion-F}.

We finally prove the existence of a SHM solution by applying Theorem \ref{th:fixedpoint->infSDE}. With $F(x)=-\log(x^2+r)$, the integral in \eqref{asmp:tech1'} becomes
\begin{align*}
\int_{\R^2}\big(|U'(x)|^p + 2^p|x-y|^{-p}\big)(x-y)^2(x^2+r)^{m-1}(y^2+r)^{m-1} e^{-U(x)-U(y)}\d y \d x.
\end{align*}
This integral is finite for any $1 < p \le q \wedge 2$, by H\"older's inequality, thanks to the assumption that $\int_{\R}|U'(x)|^q e^{-U(x)}\d x < \infty$ for some $q > 1$ and that $e^{-U(y)}\d y$ has finite moments of every order. Hence, Theorem \ref{th:fixedpoint->infSDE} applies, yielding the desired SHM solution. Applying Theorem \ref{th:infSDE->local} yields the existence for the local equation.
\end{proof}

\bibliographystyle{amsalpha}
\bibliography{main}

\providecommand{\bysame}{\leavevmode\hbox to3em{\hrulefill}\thinspace}
\providecommand{\MR}{\relax\ifhmode\unskip\space\fi MR }
\providecommand{\MRhref}[2]{%
  \href{http://www.ams.org/mathscinet-getitem?mr=#1}{#2}
}
\providecommand{\href}[2]{#2}
\begin{thebibliography}{GLWZ19}

\bibitem[AGZ10]{anderson2010introduction}
G.W. Anderson, A.~Guionnet, and O.~Zeitouni, \emph{An introduction to random
  matrices}, no. 118, Cambridge university press, 2010.

\bibitem[BBW19]{bhamidi2019weakly}
S.~Bhamidi, Amarjit Budhiraja, and Ruoyu Wu, \emph{Weakly interacting particle
  systems on inhomogeneous random graphs}, Stochastic Processes and their
  Applications \textbf{129} (2019), no.~6, 2174--2206.

\bibitem[BCW20]{bayraktar2020graphon}
Erhan Bayraktar, Suman Chakraborty, and Ruoyu Wu, \emph{Graphon mean field
  systems}, arXiv preprint arXiv:2003.13180 (2020).

\bibitem[B{\'E}85]{bakryemery}
D.~Bakry and M.~{\'E}mery, \emph{Diffusions hypercontractives}, S{\'e}minaire
  de Probabilit{\'e}s XIX 1983/84, Springer, 1985, pp.~177--206.

\bibitem[BGL13]{bakryGentilLedoux}
D.~Bakry, I.~Gentil, and M.~Ledoux, \emph{Analysis and geometry of {M}arkov
  diffusion operators}, vol. 348, Springer Science \& Business Media, 2013.

\bibitem[BJT11]{bossy2011conditional}
M.~Bossy, J.-F. Jabir, and D.~Talay, \emph{On conditional {M}c{K}ean
  {L}agrangian stochastic models}, Probability theory and related fields
  \textbf{151} (2011), no.~1-2, 319--351.

\bibitem[BKRS15]{bogachev2015fokker}
V.I. Bogachev, N.V. Krylov, M.~R{\"o}ckner, and S.V. Shaposhnikov,
  \emph{Fokker-{P}lanck-{K}olmogorov equations}, vol. 207, American
  Mathematical Soc., 2015.

\bibitem[BL02]{brascamp2002extensions}
H.J. Brascamp and E.H. Lieb, \emph{On extensions of the {B}runn-{M}inkowski and
  {P}r{\'e}kopa-{L}eindler theorems, including inequalities for log concave
  functions, and with an application to the diffusion equation}, Inequalities,
  Springer, 2002, pp.~441--464.

\bibitem[BL10]{bordenave2010resolvent}
C.~Bordenave and M.~Lelarge, \emph{Resolvent of large random graphs}, Random
  Structures \& Algorithms \textbf{37} (2010), no.~3, 332--352.

\bibitem[Bon58]{bonsall1958linear}
F.F. Bonsall, \emph{Linear operators in complete positive cones}, Proceedings
  of the London mathematical Society \textbf{3} (1958), no.~1, 53--75.

\bibitem[BRS21]{bogachev2021ambrosio}
V.I. Bogachev, M.~R\"{o}ckner, and S.V. Shaposhnikov, \emph{On the
  {A}mbrosio-{F}igalli-{T}revisan superposition principle for probability
  solutions to {F}okker-{P}lanck-{K}olmogorov equations}, Journal of Dynamics
  and Differential Equations \textbf{33} (2021), no.~2, 715--739.

\bibitem[BS13]{brunick2013mimicking}
G.~Brunick and S.~Shreve, \emph{Mimicking an {I}t{\^o} process by a solution of
  a stochastic differential equation}, The Annals of Applied Probability
  \textbf{23} (2013), no.~4, 1584--1628.

\bibitem[CDG20]{coppini2020law}
F.~Coppini, H.~Dietert, and G.~Giacomin, \emph{A law of large numbers and large
  deviations for interacting diffusions on {E}rd{\H{o}}s--{R}{\'e}nyi graphs},
  Stochastics and Dynamics \textbf{20} (2020), no.~02, 2050010.

\bibitem[CMV03]{carrillo2003kinetic}
J.A. Carrillo, R.J. McCann, and C.~Villani, \emph{Kinetic equilibration rates
  for granular media and related equations: entropy dissipation and mass
  transportation estimates}, Revista Matematica Iberoamericana \textbf{19}
  (2003), no.~3, 971--1018.

\bibitem[DGL16]{delattre2016note}
S.~Delattre, G.~Giacomin, and E.~Lu{\c{c}}on, \emph{A note on dynamical models
  on random graphs and {F}okker--{P}lanck equations}, Journal of Statistical
  Physics \textbf{165} (2016), no.~4, 785--798.

\bibitem[FCN20]{bet2020weakly}
G.~Bet F., F.~Coppini, and F.R. Nardi, \emph{Weakly interacting oscillators on
  dense random graphs}, arXiv preprint arXiv:2006.07670 (2020).

\bibitem[Fig08]{figalli2008existence}
A.~Figalli, \emph{Existence and uniqueness of martingale solutions for {SDE}s
  with rough or degenerate coefficients}, Journal of Functional Analysis
  \textbf{254} (2008), no.~1, 109--153.

\bibitem[Fun05]{funaki-interface}
T.~Funaki, \emph{Stochastic interface models}, Lectures on probability theory
  and statistics, Springer, 2005, pp.~103--274.

\bibitem[Geo11]{georgii2011gibbs}
H.-O. Georgii, \emph{Gibbs measures and phase transitions}, vol.~9, Walter de
  Gruyter, 2011.

\bibitem[GLWZ19]{guillin2019uniform}
A.~Guillin, W.~Liu, L.~Wu, and C.~Zhang, \emph{Uniform {P}oincar\'{e} and
  logarithmic {S}obolev inequalities for mean field particles systems}, arXiv
  preprint arXiv:1909.07051 (2019).

\bibitem[Gy{\"o}86]{gyongy1986mimicking}
I.~Gy{\"o}ngy, \emph{Mimicking the one-dimensional marginal distributions of
  processes having an {I}t{\^o} differential}, Probability theory and related
  fields \textbf{71} (1986), no.~4, 501--516.

\bibitem[JLR10]{JLR}
B.~Jourdain, T.~Leli\`evre, and R.~Roux, \emph{Existence, uniqueness and
  convergence of a particle approximation for the adaptive biasing force
  process}, M2AN Math. Model. Numer. Anal. \textbf{44} (2010), no.~5, 831--865.
  \MR{2731395}

\bibitem[JZ20]{jourdain2020existence}
B.~Jourdain and A.~Zhou, \emph{Existence of a calibrated regime switching local
  volatility model}, Mathematical Finance \textbf{30} (2020), no.~2, 501--546.

\bibitem[Kes59]{kesten1959symmetric}
H.~Kesten, \emph{Symmetric random walks on groups}, Transactions of the
  American Mathematical Society \textbf{92} (1959), no.~2, 336--354.

\bibitem[Lac21]{lacker2021quantitative}
D.~Lacker, \emph{Quantitative approximate independence for continuous mean
  field {G}ibbs measures}, arXiv preprint arXiv:2105.03238 (2021).

\bibitem[LRW19]{lacker2019large}
D.~Lacker, K.~Ramanan, and R.~Wu, \emph{Local weak convergence for sparse
  networks of interacting processes}, arXiv preprint arXiv:1904.02585 (2019).

\bibitem[LRW20]{lacker2020marginal}
\bysame, \emph{Marginal dynamics of interacting diffusions on unimodular
  {G}alton-{W}atson trees}, arXiv preprint arXiv:2009.11667 (2020).

\bibitem[LRW21]{lacker2021locally}
\bysame, \emph{Locally interacting diffusions as {M}arkov random fields on path
  space}, Stochastic Processes and their Applications \textbf{140} (2021),
  81--114.

\bibitem[LSZ20]{lacker2020inverting}
D.~Lacker, M.~Shkolnikov, and J.~Zhang, \emph{Inverting the {M}arkovian
  projection, with an application to local stochastic volatility models}, The
  Annals of Probability \textbf{48} (2020), no.~5, 2189--2211.

\bibitem[McC97]{mccann1997convexity}
R.J. McCann, \emph{A convexity principle for interacting gases}, Advances in
  mathematics \textbf{128} (1997), no.~1, 153--179.

\bibitem[McK81]{mckay1981expected}
B.D. McKay, \emph{The expected eigenvalue distribution of a large regular
  graph}, Linear Algebra and its Applications \textbf{40} (1981), 203--216.

\bibitem[Med14]{medvedev2014nonlinear}
G.S. Medvedev, \emph{The nonlinear heat equation on dense graphs and graph
  limits}, SIAM Journal on Mathematical Analysis \textbf{46} (2014), no.~4,
  2743--2766.

\bibitem[MW89]{MoharWoess}
B.~Mohar and W.~Woess, \emph{A survey on spectra of infinite graphs}, Bulletin
  of the London Mathematical Society \textbf{21} (1989), no.~3, 209--234.

\bibitem[OR19]{oliveira2019interacting}
R.I. Oliveira and G.H. Reis, \emph{Interacting diffusions on random graphs with
  diverging average degrees: {H}ydrodynamics and large deviations}, Journal of
  Statistical Physics \textbf{176} (2019), no.~5, 1057--1087.

\bibitem[ORS20]{oliveira2020interacting}
R.I. Oliveira, G.H. Reis, and L.M. Stolerman, \emph{Interacting diffusions on
  sparse graphs: hydrodynamics from local weak limits}, Electronic Journal of
  Probability \textbf{25} (2020), 1--35.

\bibitem[OV00]{ottovillani}
F.~Otto and C.~Villani, \emph{Generalization of an inequality by {T}alagrand
  and links with the logarithmic {S}obolev inequality}, Journal of Functional
  Analysis \textbf{173} (2000), no.~2, 361--400.

\bibitem[RAS15]{rassoul2015course}
F.~Rassoul-Agha and T.~Sepp{\"a}l{\"a}inen, \emph{A course on large deviations
  with an introduction to {G}ibbs measures}, vol. 162, American Mathematical
  Soc., 2015.

\bibitem[Roz13]{rozikov2013gibbs}
U.A. Rozikov, \emph{Gibbs measures on {C}ayley trees}, World scientific, 2013.

\bibitem[Roz21]{rozikov2021gibbs}
\bysame, \emph{Gibbs measures of {P}otts model on {C}ayley trees: a survey and
  applications}, arXiv preprint arXiv:2103.07391 (2021).

\bibitem[RS78]{reed1978methods}
M.~Reed and B.~Simon, \emph{Methods of modern mathematical physics}.

\bibitem[Spi75]{spitzer1975markov}
F.~Spitzer, \emph{Markov random fields on an infinite tree}, The Annals of
  Probability \textbf{3} (1975), no.~3, 387--398.

\bibitem[Szn91]{sznitman1991topics}
A.-S. Sznitman, \emph{Topics in propagation of chaos}, Ecole d'{\'e}t{\'e} de
  probabilit{\'e}s de Saint-Flour XIX—1989, Springer, 1991, pp.~165--251.

\bibitem[Tre14]{trevisan_diss}
Dario Trevisan, \emph{Well-posedness of diffusion processes in metric measure
  spaces}, Dissertation available at
  http://cvgmt.sns.it/media/doc/paper/4363/tesi-trevisan.pdf (2014).

\bibitem[Tre16]{trevisan}
D.~Trevisan, \emph{Well-posedness of multidimensional diffusion processes with
  weakly differentiable coefficients}, Electron. J. Probab. \textbf{21} (2016),
  Paper No. 22, 41. \MR{3485364}

\bibitem[vKOR10]{van2010gibbs}
A.C.D. {van Enter}, C.~K{\"u}lske, A.A. Opoku, and W.M. Ruszel,
  \emph{Gibbs-non-{G}ibbs properties for n-vector lattice and mean-field
  models}, Brazilian Journal of Probability and Statistics \textbf{24} (2010),
  no.~2, 226--255.

\bibitem[vR09]{van2009gibbsianness}
A.C.D. {van Enter} and W.M. Ruszel, \emph{Gibbsianness versus
  non-{G}ibbsianness of time-evolved planar rotor models}, Stochastic Processes
  and their Applications \textbf{119} (2009), no.~6, 1866--1888.

\bibitem[Zac83]{zachary1983countable}
S.~Zachary, \emph{Countable state space {M}arkov random fields and {M}arkov
  chains on trees}, The Annals of Probability (1983), 894--903.

\end{thebibliography}

\end{document}